\theoremstyle{plain}
\newtheorem{thm}{Theorem}[section]
\numberwithin{equation}{section} %% Comment out for sequentially--numbered
\numberwithin{figure}{section} %% Comment out for sequentially--numbered
\theoremstyle{plain}
\newtheorem*{thm*}{Theorem}
\theoremstyle{plain}
\newtheorem{cor}[thm]{Corollary} %%Delete [thm] to re--start numbering
\theoremstyle{plain}
\newtheorem*{cor*}{Corollary}
\theoremstyle{plain}
\newtheorem{lem}[thm]{Lemma} %%Delete [thm] to re--start numbering
\theoremstyle{plain}
\newtheorem{prop}[thm]{Proposition} %%Delete [thm] to re--start numbering
\theoremstyle{definition}
\newtheorem{defn}[thm]{Definition}
\theoremstyle{remark}
\newtheorem{rem}[thm]{Remark}
\theoremstyle{remark}
\theoremstyle{remark}
\theoremstyle{remark}
\newtheorem{question}[thm]{Question}
\theoremstyle{definition}
\newtheorem{example}[thm]{Example}
\theoremstyle{remark}
  \newtheorem*{acknowledgement*}{Acknowledgement}
\theoremstyle{plain}
\newtheorem{subthm}{Theorem}[subsection]
\theoremstyle{plain}
\newtheorem{subcor}[subthm]{Corollary} %%Delete [thm] to re--start numbering
\theoremstyle{plain}
\newtheorem{sublem}[subthm]{Lemma} %%Delete [thm] to re--start numbering
\theoremstyle{plain}
\newtheorem{subprop}[subthm]{Proposition} %%Delete [thm] to re--start numbering
\theoremstyle{definition}
\newtheorem{subdefn}[subthm]{Definition}
\theoremstyle{remark}
\newtheorem{subrem}[subthm]{Remark}
\theoremstyle{remark}
\theoremstyle{remark}
\newtheorem{subquestion}[subthm]{Question}
\theoremstyle{plain}
\newcommand{\id}{\operatorname{id}}
\newcommand{\Ad}{\operatorname{Ad}}
\newcommand{\Aut}{\operatorname{Aut}}
\newcommand{\Out}{\operatorname{Out}}
\newcommand{\Hom}{\operatorname{\mathbb{H}om}}
\newcommand{\dist}{\operatorname{d}}
\newcommand{\Prob}{\operatorname{Prob}}
\newcommand{\Z}{\mathbb{Z}}
\newcommand{\N}{\mathbb{N}}
\newcommand{\R}{R^\omega}
\newcommand{\p}{\varphi}
\newcommand{\e}{\varepsilon}
\newcommand{\conv}{\operatorname{conv}}
\newcommand{\plane}{\operatorname{plane}}
\newcommand{\IC}{{\mathbb C}}
\newcommand{\IK}{{\mathbb K}}
\newcommand{\IM}{{\mathbb M}}
\newcommand{\IN}{{\mathbb N}}
\newcommand{\cU}{{\mathcal U}}
\newcommand{\IF}{{\mathbb F}}
\newcommand{\IZ}{{\mathbb Z}}
\newcommand{\IB}{{\mathbb B}}
\newcommand{\G}{\Gamma}
\DeclareMathOperator{\Tr}{Tr}
\newcommand{\ip}[1]{\mathopen{\langle}#1\mathclose{\rangle}}
\begin{document}

\title{Topological dynamical systems associated to II$_1$-factors}

\author{Nathanial P. Brown}

\thanks{Supported by NSF grants DMS-0554870 and DMS-0856197.}

\keywords{II$_1$-factors, homomorphisms, dynamical system, ultraproduct.}
\subjclass[2000]{Primary 46L10, Secondary 46L36}

\address{Department of Mathematics, Penn State University, State
College, PA 16802}

\email{nbrown@math.psu.edu}

\begin{abstract}
If $N \subset \R$ is a separable II$_1$-factor, the space $\Hom(N,\R)$ of unitary equivalence classes of unital $*$-homomorphisms $N \to \R$ is shown to have a surprisingly rich structure.  If $N$ is not hyperfinite, $\Hom(N,\R)$ is an infinite-dimensional, complete, metrizeable topological space with convex-like structure, and the outer automorphism group $\Out(N)$ acts on it by ``affine" homeomorphisms.  (If $N \cong R$, then $\Hom(N,\R)$ is just a point.)  Property (T) is reflected in the extreme points -- they're discrete in this case.  For certain free products $N = \Sigma \ast R$, every countable group acts nontrivially on $\Hom( N, \R)$, and we show the extreme points are not discrete for these examples. Finally, we prove that the dynamical systems associated to free group factors are isomorphic.
\end{abstract}

\maketitle

\section{Introduction}

Collections of morphisms between two objects -- e.g., homotopy groups, Pontryagin duals, various representation theories, etc.\ -- are fundamental in mathematics.  In this paper we study topological spaces of morphisms that arise naturally in the context of operator algebras.

\begin{defn}
\label{defn:hom}
Given unital C$^*$-algebras $A$ and $B$, let $\Hom (A, B)$ denote the set of unital $*$-homomorphisms $A \to B$, modulo unitary equivalence.  That is, $[\pi] = [\rho] \in \Hom (A,B)$ if and only if there is a unitary $u \in B$ such that $\pi(a) = u\rho(a)u^*$ for all $a \in A$.
\end{defn}

If one takes $B = B(\mathcal{H})$ and restricts to irreducible representations, we get the spectrum $\hat{A}$ of $A$.  If $B$ is the Calkin algebra (and we use a slightly stronger notion of unitary equivalence), then we get the celebrated Brown-Douglas-Fillmore (BDF) semigroup $\mathrm{Ext}(A)$.  However, for other target algebras $B$ these $\Hom$ spaces have been largely overlooked, despite providing natural and potentially useful invariants.   (For example, it can be shown that if $A$ is a nuclear C$^*$-algebra (cf.\ \cite{me-n-taka}) and $B$ is an ultraproduct II$_1$-factor, then $\Hom (A,B)$ can be identified with the tracial state space $\mathrm{T}(A)$ of $A$; see \cite[Corollary 3.4]{sherman}.)

Note that $\Hom(A,B)$ carries a natural ``topology of point-wise convergence."  That is, $[\pi_n] \to [\pi]$ if there exist representatives $\pi_n'$ of $[\pi_n]$ such that $\pi_n'(a) \to \pi(a)$ for every $a \in A$.  Also note that the outer automorphism group of $A$, $\Out(A) := \Aut(A)/\mathrm{Inn}(A)$, acts on $\Hom(A,B)$ by precomposition, i.e.,  $\alpha.[\pi] := [\pi\circ \alpha^{-1}]$ for all $\alpha \in \Out(A)$ and $[\pi] \in \Hom(A,B)$.  It is easily seen that $[\pi] \mapsto \alpha.[\pi]$ is a  homeomorphism and thus, for fixed $A$, every C$^*$-algebra $B$ gives rise to an invariant -- the topological dynamical system $(\Hom(A,B), \Out(A))$.

Presumably there is a vast general theory to be developed here, but in the present paper we have a specific goal: demonstrate the relevance of these invariants by considering a very special case. Namely, the case that $B$ is an ultraproduct of the hyperfinite II$_1$-factor.

Before proceeding  further, let's fix some notation and conventions. If $(M_n)$ is a sequence of finite factors and $\omega \in \beta \N \setminus \N$ is a free ultrafilter, then we let $(M_n)^\omega$ denote the corresponding ultraproduct  (cf.\ \cite[Appendix A]{me-n-taka}).  It is a II$_1$-factor with unique trace $\tau$ (defined as the limit of traces on the $M_n$'s),  canonical 2-norm $\| x \|_2^2 = \tau(x^*x)$ and unitary group denoted by $\mathcal{U}((M_n)^\omega)$.  If each $M_n$ is isomorphic to the hyperfinite II$_1$-factor $R$, our main case of interest, then we let $\R$ denote the corresponding ultraproduct.

One big advantage of using ultraproducts as targets, and sticking to separable domains, is that in this case \emph{approximate} unitary equivalence is the same thing as unitary equivalence.  That is, if $A$ is (weakly) separable, $\pi, \rho\colon A \to (M_n)^\omega$ are $*$-homomorphisms and there exist unitaries $u_k \in (M_n)^\omega$ such that $\| \pi(a) - u_k \rho(a) u_k^* \|_2 \to 0$, then there is a unitary $u \in (M_n)^\omega$ such that $\pi(a) = u\rho(a)u^*$ for all $a \in A$ (cf.\ \cite[Theorem 3.1]{sherman}).   This fact allows us to define a metric (as opposed to a pseudo-metric, like one gets in BDF theory) on $\Hom(A, (M_n)^\omega)$ as follows.

\begin{defn}
\label{defn:metrics}
If $\{a_n\}$ is a sequence of contractions that generate $A$ (meaning the $*$-algebra they generate is suitably\footnote{Either in norm, or $\sigma$-weakly, depending on whether $A$ is a C$^*$- or W$^*$-algebra.}  dense) and $[\pi], [\sigma] \in \Hom(A, (M_n)^\omega)$, then define $$\dist( [\pi], [\rho] ) = \inf_{u \in \mathcal{U}((M_n)^\omega)} \bigg(\sum_{n = 1}^\infty \frac{1}{2^{2n}} \|\pi(a_n) - u \rho(a_n) u^* \|_2^2 \bigg)^{1/2}.$$
\end{defn}

 The $\ell^2$-formula defining $\dist$ is unimportant, $\ell^p$ variations would work just as well.  For example, if $A$ is generated by finitely many contractions $\{ a_1, \ldots, a_k\}$, one may wish to use an $\ell^\infty$-metric such as $$\dist( [\pi], [\rho] ) = \inf_{u \in \mathcal{U}((M_n)^\omega)} \bigg( \max_{1 \leq i \leq k}  \|\pi(a_n) - u \rho(a_n) u^* \|_2^2 \bigg).$$ In any case, it is easily verified that these types of metrics induce the ``topology of point-wise convergence," as describe above.  Also, an ultraproduct argument shows that the infimum in Definition \ref{defn:metrics} is attained.\footnote{This is probably well known, but here's a sketch in the case $A$ is singly-generated by a contraction $x$ and the metric used is $\dist([\pi],[\rho]) = \inf_{u \in \mathcal{U}((M_n)^\omega)} \| \pi(x) - \Ad u \circ \rho(x) \|_2$. (The general case is similar.) Choose unitaries $u_n \in (M_n)^\omega$ such that $\| \pi(x) - \Ad u_n \circ \rho(x) \|_2 < \dist( [\pi], [\rho] ) + 1/n$.  Lift $\pi(x), \rho(x)$ and the $u_n$'s to elements $(x^\pi_i)_{i \in \N}, (x^\rho_i)_{i \in \N}$ and $(U^{(n)}_i)_{i \in \N} \in \Pi M_n$.  We can arrange that each $U^{(n)}_i$ is a unitary. For each $n\in \N$ we put $$S_n = \{ i \in \N : \| x^\pi_i - U^{(n)}_i x^\rho_i (U^{(n)}_i)^* \|_2 < \dist( [\pi], [\rho] ) + 2/n\}.$$ Now define $(V_i)_{i \in \N}$ by $V_i = 0$ if $i \notin \bigcup_n S_n$; $V_i = U^{(i)}_i$ if $i \in \bigcap_n S_n$; and $V_i = U_i^{(n_i)}$, where $n_i := \max \{ n : i \in S_n\}$, otherwise.  Note that if $i \in \bigcap_n S_n$, then $\| x^\pi_i - V_i x^\rho_i V_i^* \|_2 < \dist( [\pi], [\rho] ) + 2/i$, while $i \in S_m$ for some (but not all) $m$ implies $\| x^\pi_i - V_i x^\rho_i V_i^* \|_2 < \dist( [\pi], [\rho] ) + 2/m$. It follows that $\lim_\omega \| x^\pi _i - V_i x^\rho_i V_i^* \|_2 = \dist( [\pi], [\rho] )$, because for each $\e >0$ the set $S = \{ i \in \N : \| X^\pi _i - V_i X^\rho_i V_i^* \|_2 < \dist( [\pi], [\rho] ) + \e \}$ contains $S_m \cap [m,\infty)$, whenever $2/m < \e$, and $S_m \cap [m,\infty) \in \omega$ for all $m$. }

So, if we fix $A$ and let $(M_n)$ vary among different factors, we get a large class of invariants $(\Hom(A, (M_n)^\omega), \Out(A))$.  There are lots of interesting cases to consider.  For example, the case that each $M_n$ is a finite-dimensional factor, where one could hope to make contact with things like free entropy dimension (cf.\ \cite{V2}), or the case $A = M = M_n$ for all $n$, where classical concepts like central sequences and property $\Gamma$ might be detectable.  But, as mentioned above, we're going to specialize even further to the case that $A$ is a separable II$_1$-factor and $M_n = R$ is the hyperfinite II$_1$-factor.  In fact, to avoid Connes's infamous embedding problem, which asks whether or not $\Hom(N,\R)$ is nonempty for every separable II$_1$-factor $N$, we will further assume that $N$ is  $\R$-embeddable.

\vspace{2mm}

To summarize, in this paper we consider topological dynamical systems associated to separable, $\R$-embeddable, II$_1$-factors; it is a first step, with lots of remaining open questions and other important cases yet to be considered.  But the results so far are encouraging and suggest there is more to be learned.  For example, we will show:
\begin{enumerate}
\item $\Hom(N, \R)$ is always complete (cf.\ the proof of Proposition \ref{prop:convexity}), but almost never compact (see Theorem \ref{thm:main}).

\item $\Hom(N, \R)$ is not a semigroup (like the BDF case), but instead has a convex structure (cf.\ Definition \ref{defn:convex-like} and Proposition \ref{prop:convexity}). This is surprising because we know of no vector-space embedding of $\Hom(N,\R)$.\footnote{However, after a lecture in Nottingham, Ilijas Farah and Aaron Tikuisis suggested different approaches to possible vector-space embeddings.  Aaron's approach essentially builds upon the convex structure established in this paper, but Ilijas's idea may lead to a canonical embedding.  We're looking into it.}

\item In addition to implying contractibility of $\Hom(N, \R)$, hence triviality of most topological invariants, the convex structure implies $\Hom(N, \R)$ has infinite topological dimension whenever $N \subset \R$ is not hyperfinite (see Theorem \ref{thm:main}).\footnote{One motivation for this paper was a theorem of Jung which says that $\Hom(N,\R)$ is a point if and only if $N\cong R$ (\cite{jung}).  An early result of the author showed that if $N \ncong R$, then $\Hom(N,\R)$ is uncountable.  This fact was immediately generalized by Narutaka Ozawa: $\Hom(N,\R)$ is not even second countable when $N \ncong R$ (see Theorem \ref{thm:nonsepA} in the appendix).  Non-second-countability is also a crucial ingredient in proving infinite dimensionality, and the non-compactness result mentioned earlier.}

\item The convex structure allows one to define extreme points.  It turns out that $[\pi] \in \Hom(N, \R)$ is extreme if and only if the relative commutant $\pi(N)' \cap \R$ is a factor (cf. Proposition \ref{prop:extreme}).  It follows that if $N$ has property (T), then the extreme points of $\Hom(N,\R)$ are a discrete subset (i.e., there is a uniform lower bound on the distance between any two of them); see Corollary \ref{cor:T} (and compare with \cite{kenley-dima}).

\item Using the action of $\Out(N)$ and factorial-commutant characterization of extreme points, we give examples where the extreme points are not discrete (see Corollary \ref{cor:nondiscrete}), thereby distinguishing them from the property (T) case.

\item Considering the concrete example $N = L(\mathrm{SL}(3,\Z) \ast R)$, we show in Corollary \ref{cor:proper} that every countable discrete group $\Gamma$ acts on $\Hom(N,\R)$ and there is a particular extreme point with trivial stabilizer (in fact, we get an embedding $\Gamma \hookrightarrow \Hom(N,\R)$ with discrete image).

\item Finally, we prove that the dynamical systems associated to a factor and one of its rescalings are isomorphic (Theorem \ref{thm:scale}).  In particular, the free group factors have isomorphic invariants.
 \end{enumerate}

%We also mention that this work has been used to answer affirmatively the following question of Popa: If $N$ is an $\R$-embeddable factor, is there an embedding with factorial relative commutant? (Since every separable subalgebra of $\R$ has diffuse -- hence nontrivial -- commutant, this is as close as possible to asking for an irreducible inclusion.) In \cite{BC} we use topological methods to show that $\Hom(N,\R)$ always has extreme points (even though $\Hom(N,\R)$ isn't compact and we have no Hahn-Banach-type results at hand; see also \cite{roberts}).  As mentioned already, these extreme points are characterized by factorial relative commutants.

An outline of this paper is as follows.  In section \ref{sec:convex} we define an abstract notion of convex-like structure on a metric space.  The main result of that section, Theorem \ref{thm:infinitedim}, is that any such space with finite topological dimension must be second countable.  In section \ref{sec:technical} we establish a number of preliminary facts that are needed to define the convex-like structure on $\Hom(N,\R)$.  In section \ref{sec:convex-like} we define the convex-like structure and prove that it satisfies the abstract axioms defined earlier.  Next, in section \ref{sec:extreme}, we study extreme points and consider the property (T) case.  Which is followed, in section \ref{sec:action}, by a discussion of the action of $\Out(N)$ and the free-product type examples mentioned above.  In section \ref{sec:func} we prove that the dynamical systems associated to a factor and one of its corners are always isomorphic, and discuss a number of open problems and related questions.  Finally, in an appendix written by Narutaka Ozawa, it is shown that if $N \subset \R$ and $N \ncong R$, then $\Hom(N,\R)$ is not second countable; and it's shown that every character on a free group induces an automorphism that acts nontrivially on $\Hom(L(\mathbb{F}_n),\R)$.

%\vspace{4mm}

\begin{acknowledgement*} This paper represents work carried out over several years, and was influenced by numerous questions and comments from various colleagues. It is a pleasure to thank Dietmar Bisch, Jacek Brodzki, Valerio Capraro, Ilijas Farah, Nigel Higson, Kenley Jung, Narutaka Ozawa, Jesse Peterson, Sorin Popa, Dave Sherman, Dima Shlyakhtenko, Andreas Thom, Aaron Tikuisis and anyone else I may have forgotten for helping shape this paper in one way or another.  Also, a significant portion of this work was carried out while on sabbatical at the University of Hawaii and we are eternally  grateful for the hospitality.  \end{acknowledgement*}

%%%%%%%%%%%%%%%%%%%%%%%%%%%%%%%%%%%%%%%%%%%%%%
\section{Metric spaces with a convex-like structure}
\label{sec:convex}

Let $(X, \dist)$ denote a complete metric space which is bounded, i.e., there is a constant $C$ such that $\dist(x,y) \leq C$ for all $x,y \in X$. Defining an abstract convex-like structure on $X$ is slightly technical, but basically we want a notion of ``convex combination" that enjoys the topological, metric and algebraic properties one would expect if $X$ were an honest convex subset of a bounded ball in some normed linear space.\footnote{Aaron Tikuisis has suggested a slightly different set of axioms, which are probably better for a general theory.  However, the present approach is sufficient for our purposes.} For example,  given $x_1,\ldots,x_n \in X$ and numbers $0\leq t_1,\ldots,t_n \leq 1$ such that $\sum_i t_i = 1$,  we would expect:
\begin{enumerate}
\item(commutativity) $t_1x_1 + \cdots + t_n x_n = t_{\sigma(1)} x_{\sigma(1)} + \cdots + t_{\sigma(n)} x_{\sigma(n)}$ for every permutation $\sigma \in S_n$;

\item(linearity) if $x_1 = x_2$, then $t_1x_1 + t_2 x_2 + \cdots + t_n x_n = (t_1 + t_2) x_1 + t_3 x_3 + \cdots t_n x_n$;

\item(scalar identity) if $t_i = 1$, then $t_1x_1 + \cdots + t_n x_n = x_i$;

\item(metric compatibility) $\|( t_1x_1 + \cdots + t_n x_n ) - (\tilde{t}_1 x_1 + \cdots + \tilde{t}_n x_n) \| \leq C\sum_i | t_i - \tilde{t}_i |$ and $\| ( t_1x_1 + \cdots + t_n x_n ) - ( t_1y_1 + \cdots + t_n y_n )\| \leq \sum t_i \| x_i - y_i \|$;

\item(algebraic compatibility) $$t \bigg( \sum_{i=1}^n t_ix_i \bigg) + (1-t)\bigg( \sum_{j=1}^m \tilde{t}_j \tilde{x}_j \bigg) =  \sum_{i=1}^n t t_ix_i  +  \sum_{j=1}^m (1-t)\tilde{t}_j \tilde{x}_j;$$
\end{enumerate}

To make this idea precise, let $X^{(n)} = X\times  \cdots \times X$ be the $n$-fold Cartesian product and $\Prob_n$ be the set of probability measures on the $n$-point set $\{1,2,\ldots,n\}$, endowed with the $\ell_1$-metric $\| \mu - \tilde{\mu} \| = \sum_{i=1}^n |\mu(i) - \tilde{\mu}(i) |$.

\begin{defn}
\label{defn:convex}
We say $(X,\dist)$ has a \emph{convex-like structure} if for every $n \in \N$ and $\mu \in \Prob_n$ there is a continuous map $\gamma_\mu \colon X^{(n)} \to X$ such that
\begin{enumerate}
\item\label{abelian} for each permutation $\sigma \in S_n$ and $x_1,\ldots, x_n \in X$, $$\gamma_\mu(x_1,\ldots,x_n) = \gamma_{\mu\circ\sigma} (x_{\sigma(1)},\ldots, x_{\sigma(n)});$$

\item\label{linearity} if $x_1 = x_2$, then $\gamma_\mu(x_1,x_2, \ldots,x_n) = \gamma_{\tilde{\mu}} (x_1, x_3, \ldots, x_n)$, where $\tilde{\mu} \in \Prob_{n-1}$ is given by $\tilde{\mu} (1) = \mu(1) + \mu(2)$ and $\tilde{\mu} (j) = \mu(j+1)$ for $2 \leq j \leq n-1$;

\item\label{dirac} if $\mu(i) = 1$, then $\gamma_\mu(x_1,\ldots,x_n) = x_i$;\footnote{In particular, we require the unique element $\mu \in \Prob_1$ to give rise to the identity map on $X$.}

\item\label{metric} there is a constant $C$ such that for all $x_1,\ldots, x_n \in X$, $$\dist (\gamma_{\mu} (x_1,\ldots,x_n), \gamma_{\tilde{\mu}} (x_1,\ldots,x_n)) \leq C \| \mu - \tilde{\mu} \|$$ and for all $y_1,\ldots,y_n \in X$, $$\dist (\gamma_{\mu} (x_1,\ldots,x_n), \gamma_{\mu} (y_1,\ldots,y_n)) \leq \sum_{i = 1}^n \mu(i) \dist(x_i, y_i);$$

\item\label{algebraic} for all $\nu \in \Prob_2$, $\mu \in \Prob_n$, $\tilde{\mu} \in \Prob_m$ and $x_1,\ldots, x_n, \tilde{x}_1,\ldots, \tilde{x}_m \in X$, $$\gamma_{\nu}(\gamma_\mu(x_1,\ldots,x_n), \gamma_{\tilde{\mu}}( \tilde{x}_1,\ldots, \tilde{x}_m) ) = \gamma_{\eta} ( x_1,\ldots,x_n, \tilde{x}_1,\ldots, \tilde{x}_m),$$ where $\eta \in \Prob_{n+m}$ is given by $\eta(i) = \nu(1)\mu(i)$, if $1\leq i \leq n$, and $\eta(j + n) = \nu(2) \tilde{\mu}(j)$, if $1\leq j \leq m$.
\end{enumerate}
\end{defn}

The maps $\gamma_\mu$ are notationally awkward, so for the remainder of this paper we shall revert to the ``convex combination" notation (even though our set $X$ has no additive semigroup structure); that is, if $\sum_i t_i = 1$, $\mu \in \Prob_n$ is defined by $\mu(i) = t_i$ and $x_1,\ldots, x_n \in X$ we will write $$t_1 x_1 + \cdots + t_n x_n  := \gamma_\mu(x_1,\ldots, x_n).$$

The main goal of this section is to show that if $X$ has a convex-like structure and finite topological covering dimension, then $X$ must be second countable. We'll need a few lemmas.

\begin{lem}
\label{lem:wtf}  Let $\{x_k \} \subset X$, $y \in X$ and $t \in (0,1]$ be given.  If $\{ tx_k + (1-t) y\}$ is a convergent sequence, then so is $\{x_k \}$.
\end{lem}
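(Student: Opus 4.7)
The plan is to dispose of the trivial case $t=1$ (where $tx_k + (1-t)y = x_k$) and, for $t \in (0,1)$, set $z_k := tx_k + (1-t)y$, let $z = \lim z_k$, and reduce by completeness of $(X,\dist)$ to showing $\{x_k\}$ is Cauchy.

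The structural difficulty to overcome is that the metric-compatibility axiom only gives the one-sided bound $\dist(z_k,z_j) \le t\,\dist(x_k,x_j)$: the map $T(u) := tu + (1-t)y$ is a contraction of constant $t$, but the axioms furnish no reverse Lipschitz estimate. In a Banach space one would simply have $\dist(x_k,x_j) = t^{-1}\dist(z_k,z_j)$ and be done, but the abstract setting admits no algebraic inverse operation to exploit directly.

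My approach would be a contradiction argument, supported by two preliminary estimates. First, I would apply axiom (4)(a) along the probability-measure path from $(1,0)$ to $(t,1-t)$ in $\Prob_2$ to obtain the uniform bound $\dist(u,T(u)) \le 2C(1-t)$, and hence $\limsup_{k,j}\dist(x_k,x_j) \le 4C(1-t)$ by the triangle inequality. Second, I would use axiom (5) to rewrite $\tfrac12 z_k + \tfrac12 z_j = \tfrac{t}{2}x_k + \tfrac{t}{2}x_j + (1-t)y$, whose left side tends to $z$ by axiom (4)(b) and linearity. Supposing $\{x_k\}$ were not Cauchy, I would pass to subsequences $(x_{k_n}),(x_{l_n})$ with $\dist(x_{k_n},x_{l_n}) \ge \varepsilon$, examine cluster points $x,x'$ — which necessarily satisfy $T(x)=T(x')=z$ by continuity of $T$ — and iterate the first estimate around successively better ``centers'' in place of $y$, aiming to force $x=x'$ and derive a contradiction.

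The main obstacle will be the bootstrap step: making the iteration actually drive the defect bound $4C(1-t)$ down to zero. Strict convexity of an ambient Banach space would yield this automatically, but the abstract axioms do not impose strict convexity, so the delicate point is to combine both halves of axiom (4) with the algebraic rearrangement in axiom (5) to produce a genuine self-improving contraction on the defect between cluster points. This is the step I expect to be genuinely subtle, and it is presumably what justifies the lemma being singled out as a separate technical result.
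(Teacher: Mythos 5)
Your two preliminary estimates are correct (the bound $\dist(u,tu+(1-t)y)\le 2C(1-t)$ from axiom (\ref{metric}) and hence $\limsup_{k,j}\dist(x_k,x_j)\le 4C(1-t)$, and the rearrangement $\tfrac12 z_k+\tfrac12 z_j=\tfrac{t}{2}x_k+\tfrac{t}{2}x_j+(1-t)y$ from axiom (\ref{algebraic})), but the proof has two genuine gaps. First, the step ``pass to subsequences and examine cluster points $x,x'$'' is unsound: $X$ is only assumed to be a bounded complete metric space, not compact, so a non-Cauchy sequence need not have any cluster points at all. (Indeed the whole point of the paper is that $\Hom(N,\R)$ is typically non-compact and even non-separable, so compactness cannot be smuggled in.) Second, and more importantly, the ``bootstrap'' that is supposed to drive the defect bound $4C(1-t)$ to zero is exactly the content of the lemma, and you explicitly leave it open; as written the argument never closes.

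The missing idea is an algebraic one, not a topological one. Let $\mathcal{S}$ be the set of $s\in[0,1]$ for which $\{sx_k+(1-s)y\}$ converges; your first estimate essentially shows $\mathcal{S}$ is closed, so $s':=\sup\mathcal{S}\in\mathcal{S}$ and $s'\ge t>0$. The self-improvement comes from choosing $\alpha=\frac{1}{1+s'}$, which is precisely the value making $1-\alpha=\alpha s'$. With that choice, axioms (\ref{algebraic}) and (\ref{abelian}) give the swap identity
\begin{equation*}
(1-\alpha)x_k+\alpha\bigl[s'x_l+(1-s')y\bigr]=(1-\alpha)x_l+\alpha\bigl[s'x_k+(1-s')y\bigr],
\end{equation*}
because the coefficients of $x_k$ and $x_l$ in the expanded three-term combinations coincide. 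Combining this with the contraction half of axiom (\ref{metric}) yields
\begin{equation*}
\dist\bigl(\sigma x_k+(1-\sigma)y,\ \sigma x_l+(1-\sigma)y\bigr)\le 2\alpha\,\dist\bigl(s'x_k+(1-s')y,\ s'x_l+(1-s')y\bigr),
\end{equation*}
where $\sigma=\alpha s'+(1-\alpha)=\frac{2s'}{1+s'}>s'$. Hence convergence at parameter $s'$ forces convergence at the strictly larger parameter $\sigma$, contradicting maximality unless $s'=1$. This replaces both your cluster-point extraction and your unproved iteration; I would rework the proof along these lines.
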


\begin{proof}  If $t=1$, this follows from axiom \ref{dirac} in Definition \ref{defn:convex}.  So, let $$\mathcal{S} = \{ s \in [0,1] : sx_k + (1-s) y \textrm{ is convergent} \}$$ and we'll show $1 \in \mathcal{S}$.

But first we'll show that $\mathcal{S}$ is a closed subset of $[0,1]$.  Given $s_j \in \mathcal{S}$ such that $s_j \to s$ and $\e > 0$, choose $j'$ large enough that $2C| s - s_{j'}| < \e/2$. Then, since $s_{j'} \in \mathcal{S}$, choose $N$ large enough that $\dist ( s_{j'}x_k + (1-s_{j'}) y, s_{j'}x_{l} + (1-s_{j'}) y) < \e / 2$ for all $k,l > N$.  Finally, axiom \ref{metric}  in Definition \ref{defn:convex} yields the following inequalities:
\begin{align*}
\dist(sx_k + (1-s) y, sx_l + (1-s) y) & \leq \dist(sx_k + (1-s) y, s_{j'}x_k + (1-s_{j'}) y) \\
& \hspace{4mm} + \dist ( s_{j'}x_k + (1-s_{j'}) y, s_{j'}x_{l} + (1-s_{j'}) y) \\
& \hspace{4mm} +  \dist(s_{j'}x_l + (1-s_{j'}) y, sx_l + (1-s) y)\\
& \leq 4C| s - s_{j'}| + \dist ( s_{j'}x_k + (1-s_{j'}) y, s_{j'}x_{l} + (1-s_{j'}) y)\\
& < \e,
\end{align*}
for all $k, l > N$. This implies $s \in \mathcal{S}$.

Since $\mathcal{S}$ is closed, $s' := \sup_{s \in \mathcal{S}} s  \in \mathcal{S}$.  Assume $s' < 1$, let $\alpha := \frac{1}{1 + s'}$ and we'll show $\alpha s' + (1-\alpha) \in \mathcal{S}$, thereby contradicting the maximality of $s'$. To do this, first observe that axioms  \ref{algebraic} and \ref{linearity} of Definition \ref{defn:convex} imply that $$(1-\alpha) x_k + \alpha [ s' x_k + (1-s') y]  = [ \alpha s' + (1-\alpha) ] x_k + [\alpha - \alpha s'] y,$$ while axioms  \ref{algebraic}  and \ref{abelian} give the identity $$(1-\alpha) x_k + \alpha [ s' x_l + (1-s') y] = (1-\alpha) x_l + \alpha [ s' x_k + (1-s') y].$$  These algebraic identities yield the following inequalities:
\begin{align*}
\dist([ \alpha s' + (1-\alpha) ] x_k & + [\alpha - \alpha s'] y, [ \alpha s' + (1-\alpha) ] x_l + [\alpha - \alpha s'] y) \\
& = \dist ( (1-\alpha) x_k + \alpha [ s' x_k + (1-s') y], (1-\alpha) x_l + \alpha [ s' x_l + (1-s') y] ) \\
& \leq \dist ( (1-\alpha) x_k + \alpha [ s' x_k + (1-s') y], (1-\alpha) x_k + \alpha [ s' x_l + (1-s') y] )\\
& \hspace{4mm} + \dist ( (1-\alpha) x_l + \alpha [ s' x_k + (1-s') y], (1-\alpha) x_l + \alpha [ s' x_l + (1-s') y]  )\\
& \leq 2\alpha \dist(s' x_k + (1-s') y , s' x_l + (1-s') y] ),
\end{align*}
where  we've used axiom \ref{metric} of Definition \ref{defn:convex} in the last inequality.  This shows $\alpha s' + (1-\alpha) \in \mathcal{S}$, so the proof is complete.
\end{proof}

\begin{defn} Given a finite set $\mathfrak{F} = \{x_1, \ldots, x_n\} \subset X$, let its \emph{convex hull} be the set $$\conv(\mathfrak{F}) = \{ \sum_{i=1}^n \mu(i) x_i : \mu \in \Prob_n \}.$$
\end{defn}

Using the compactness of $\Prob_n$ and axiom \ref{metric} of Definition \ref{defn:convex}, the following fact is easily verified.

\begin{lem} For every finite set $\mathfrak{F} \subset X$, $\conv(\mathfrak{F})$ is sequentially compact (hence compact, since $X$ is a metric space).
\end{lem}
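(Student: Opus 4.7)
The plan is quite short. The key observation is that $\conv(\mathfrak{F})$ is precisely the image of the map $\Phi \colon \Prob_n \to X$ defined by $\Phi(\mu) = \gamma_\mu(x_1,\ldots,x_n)$, where $\mathfrak{F} = \{x_1,\ldots,x_n\}$. Since the continuous image of a (sequentially) compact set in a metric space is (sequentially) compact, the whole argument reduces to two easy verifications: first, that $\Prob_n$ is sequentially compact in the $\ell_1$-metric, and second, that $\Phi$ is continuous.

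For the first point, $\Prob_n$ is just the standard $(n-1)$-simplex sitting inside $\mathbb{R}^n$ (with the $\ell_1$-norm), hence a closed bounded subset of finite-dimensional Euclidean space, hence sequentially compact by Bolzano--Weierstrass.

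For the second point, I would invoke the first inequality in axiom \ref{metric} of Definition \ref{defn:convex} directly: it states
\[
\dist(\gamma_\mu(x_1,\ldots,x_n), \gamma_{\tilde\mu}(x_1,\ldots,x_n)) \leq C\,\|\mu - \tilde\mu\|,
\]
so $\Phi$ is in fact $C$-Lipschitz, and continuity is immediate.

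Putting these together: given any sequence $(y_k) \subset \conv(\mathfrak{F})$, write $y_k = \Phi(\mu_k)$ for some $\mu_k \in \Prob_n$; pass to a subsequence along which $\mu_k \to \mu \in \Prob_n$; then $y_k = \Phi(\mu_k) \to \Phi(\mu) \in \conv(\mathfrak{F})$ by the Lipschitz estimate. I do not anticipate any obstacle here — the statement is essentially a bookkeeping remark recording that the axiomatic framework is strong enough to upgrade the manifest compactness of $\Prob_n$ to compactness of the hull.
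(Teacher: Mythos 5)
Your proof is correct and is precisely the argument the paper has in mind: it cites only "the compactness of $\Prob_n$ and axiom \ref{metric} of Definition \ref{defn:convex}," which is exactly the Lipschitz estimate you invoke to show $\mu \mapsto \gamma_\mu(x_1,\ldots,x_n)$ is continuous, so that $\conv(\mathfrak{F})$ is a continuous image of the compact simplex.
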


\begin{defn}  Given a finite set $\mathfrak{F} \subset X$, let the \emph{plane generated by $\mathfrak{F}$} be the set $$\plane(\mathfrak{F}) = \{ x \in X : \exists \ y, z \in \conv(\mathfrak{F}), 0 < t \leq 1\textrm{ such that } tx + (1-t) y = z \}.$$
\end{defn}

\begin{lem} \label{lem:plane} For every finite set $\mathfrak{F} \subset X$, $\plane(\mathfrak{F})$ is $\sigma$-compact, hence separable.
\end{lem}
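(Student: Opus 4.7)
The natural approach is to write $\plane(\mathfrak{F})$ as a countable union of compact sets. For each $\varepsilon > 0$, define
$$
\plane_{\varepsilon}(\mathfrak{F}) = \{ x \in X : \exists\, y,z \in \conv(\mathfrak{F}),\ \varepsilon \leq t \leq 1 \text{ with } tx + (1-t)y = z \}.
$$
Since the definition of $\plane(\mathfrak{F})$ requires only $t > 0$, we have $\plane(\mathfrak{F}) = \bigcup_{n \geq 1} \plane_{1/n}(\mathfrak{F})$, so it suffices to show each $\plane_{\varepsilon}(\mathfrak{F})$ is (sequentially) compact.

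\textbf{Key step: compactness of $\plane_{\varepsilon}(\mathfrak{F})$.} Take a sequence $\{x_k\} \subset \plane_{\varepsilon}(\mathfrak{F})$ with witnesses $y_k, z_k \in \conv(\mathfrak{F})$ and $t_k \in [\varepsilon, 1]$ satisfying $t_k x_k + (1-t_k) y_k = z_k$. By the previous lemma, $\conv(\mathfrak{F})$ is compact, and $[\varepsilon, 1]$ is compact, so after passing to a subsequence we may assume $y_k \to y \in \conv(\mathfrak{F})$, $z_k \to z \in \conv(\mathfrak{F})$, and $t_k \to t \in [\varepsilon, 1]$. The aim is to deduce that $\{x_k\}$ itself converges to some $x$ with $tx + (1-t)y = z$, which places $x$ in $\plane_{\varepsilon}(\mathfrak{F})$.

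\textbf{Extracting the limit.} This is where the main technical work lies: upgrading Lemma \ref{lem:wtf} to handle the fact that both the ``base point'' $y_k$ and the ``weight'' $t_k$ are moving. Using axiom (\ref{metric}) of Definition \ref{defn:convex} twice, one gets
$$
\dist(t_k x_k + (1-t_k) y_k,\ t x_k + (1-t) y) \leq 2C |t_k - t| + (1-t)\, \dist(y_k, y) \longrightarrow 0,
$$
so $t x_k + (1-t) y \to z$. Since $t \geq \varepsilon > 0$, Lemma \ref{lem:wtf} applies (with fixed point $y$ and fixed weight $t$), yielding $x_k \to x$ for some $x \in X$. Continuity of $\gamma_{(t,1-t)}$ then gives $tx + (1-t)y = z$, so $x \in \plane_{\varepsilon}(\mathfrak{F})$ as required.

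\textbf{Conclusion.} This shows $\plane_{\varepsilon}(\mathfrak{F})$ is sequentially compact (hence compact, since $X$ is metric), so $\plane(\mathfrak{F})$ is $\sigma$-compact. Separability follows from the standard fact that a $\sigma$-compact metric space is separable (each compact metric set has a countable dense subset, and a countable union of separable sets is separable). The main obstacle, as noted, is the mild generalization of Lemma \ref{lem:wtf} to a varying base point and weight; this is handled cleanly by the metric compatibility axiom, which absorbs both perturbations before invoking the original lemma.
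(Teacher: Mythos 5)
Your proof is correct and follows essentially the same route as the paper: decompose $\plane(\mathfrak{F})$ into the sets where the weight $t$ is bounded below by $1/n$, use compactness of $\conv(\mathfrak{F})$ and of the parameter interval to converge the witnesses along a subsequence, absorb the moving base point and weight via the metric compatibility axiom, and then invoke Lemma \ref{lem:wtf} to extract the limit of the $x_k$. The only cosmetic difference is your closed condition $\varepsilon \leq t$ versus the paper's $1/k < t$, which if anything is slightly cleaner.
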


\begin{proof} For each $k \in \N$, let $$\plane_k(\mathfrak{F}) = \{ x \in X : \exists \ y, z \in \conv(\mathfrak{F}), \frac{1}{k} < t \leq 1\textrm{ such that } tx + (1-t) y = z \}.$$  Evidently it suffices to show $\plane_k(\mathfrak{F})$ is sequentially compact, so let $\{ x_n \} \subset \plane_k(\mathfrak{F})$ be an arbitrary sequence.  Since $\conv(\mathfrak{F})$ is compact, we can find a subsequence $n_j \in \N$, real numbers $1/k \leq t, t_{n_j} \leq 1$, and points $y,y_{n_j}, z, z_{n_j} \in \conv(\mathfrak{F})$ such that $$ t_{n_j} x_{n_j} + (1-t_{n_j}) y_{n_j} = z_{n_j},$$ while $t_{n_j} \to t, y_{n_j} \to y$ and $z_{n_j} \to z$.  Metric compatibility (axiom \ref{metric}) implies
\begin{align*}
\dist(tx_{n_j} + (1-t) y, z_{n_j}) & \leq \dist(tx_{n_j} + (1-t) y, tx_{n_j} + (1-t) y_{n_j}) \\
& \hspace{4mm} + \dist ( tx_{n_j} + (1-t) y_{n_j}, t_{n_j}x_{n_j} + (1-t_{n_j}) y_{n_j}) \\
& \leq (1-t)\dist(y, y_{n_j}) + 2C|t - t_{n_j}| \to 0.\\
\end{align*}
Hence, $tx_{n_j} + (1-t) y \to z$, as $j \to \infty$, and so Lemma \ref{lem:wtf} provides us with a point $x \in X$ such that $x_{n_j} \to x$.  The proof will be complete once we observe that $x \in \plane_k(\mathfrak{F})$, but $$\dist(tx + (1-t) y, tx_{n_j} + (1-t) y) \leq t\dist(x, x_{n_j}) \to 0,$$ which implies $tx + (1-t) y = z$, so we're done.
\end{proof}

We're now ready for the main result of this section.

\begin{thm}
\label{thm:infinitedim}
If $(X,d)$ is a bounded, complete metric space with convex-like structure (cf.\ Definition \ref{defn:convex}) and $X$ has finite topological covering dimension, then $X$ is second countable.
\end{thm}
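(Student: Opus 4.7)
The plan is to argue the contrapositive: if $X$ is not second countable, then $\dim X = \infty$. Since topological covering dimension is monotone under closed subspaces of metric spaces and $\dim \Delta^n = n$, it suffices to embed the $n$-simplex $\Delta^n = \Prob_{n+1}$ homeomorphically into $X$ as a closed subset for every $n \in \N$.

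The first input is a two-sided cancellation law derived from Lemma \ref{lem:wtf}: for $0 < t \leq 1$, the equation $tx + (1-t)y = tx' + (1-t)y$ forces $x = x'$ (apply Lemma \ref{lem:wtf} to a sequence alternating between $x$ and $x'$, whose image is the constant sequence $tx+(1-t)y$), and by axiom \ref{abelian} the analogous cancellation holds in the second slot when $t < 1$. The second input is a sequence of ``independent'' points: since a non-second-countable metric space is non-separable while each $\plane(\{x_0, \ldots, x_n\})$ is separable by Lemma \ref{lem:plane}, I can pick $x_0 \in X$ arbitrarily and recursively choose $x_{n+1} \in X \setminus \plane(\{x_0, \ldots, x_n\})$.

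The core step is to show, by induction on $n$, that the map $\phi_n : \Prob_{n+1} \to X$ given by $\mu \mapsto \sum_{i=0}^n \mu(i) x_i$ is injective. Suppose $\sum s_i x_i = \sum t_i x_i$ with $s \neq t$. If $s_n = t_n = c$, then $c < 1$ (else $s = t$), and axiom \ref{algebraic} lets me rewrite each side as $c x_n + (1-c) (\text{reduced combination in } x_0, \ldots, x_{n-1})$; the cancellation law then reduces the problem to an equality of the two reduced measures under $\phi_{n-1}$, contradicting the inductive hypothesis. Otherwise WLOG $s_n > t_n$, hence $t_n < 1$; using axioms \ref{linearity} and \ref{algebraic} I rewrite LHS $= t_n x_n + (1-t_n) w$, where $w = \frac{s_n - t_n}{1-t_n} x_n + \frac{1-s_n}{1-t_n} Y$ with $Y \in \conv(\{x_0, \ldots, x_{n-1}\})$, and RHS $= t_n x_n + (1-t_n) Z$ with $Z \in \conv(\{x_0, \ldots, x_{n-1}\})$. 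Cancelling $t_n x_n$ yields $w = Z$, which exhibits $x_n$ as an element of $\plane(\{x_0, \ldots, x_{n-1}\})$ (directly when $s_n < 1$, and via $x_n = Z$ when $s_n = 1$), contradicting our choice of $x_n$.

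Once $\phi_n$ is injective, continuity (axiom \ref{metric}) together with the compactness of $\Prob_{n+1}$ and the Hausdorff property of $X$ make it a homeomorphism onto its image, which is a closed copy of $\Delta^n$ in $X$. This yields $\dim X \geq n$ for every $n$, the desired contradiction. The main obstacle I expect is the algebraic bookkeeping in the injectivity step: without an ambient vector space to work in, every rearrangement of a convex combination has to be justified through the five axioms of Definition \ref{defn:convex}, and each cancellation must be traced back to Lemma \ref{lem:wtf} rather than being taken for granted.
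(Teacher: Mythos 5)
Your proposal is correct, and it follows the paper's overall skeleton — argue the contrapositive, use Lemma \ref{lem:plane} plus non-separability to choose each new point off the plane of the previous ones, and show that a convex-combination map is injective, hence (by compactness of the parameter space) a closed embedding forcing infinite dimension. Where you genuinely diverge is in the two technical choices inside that skeleton. First, the paper builds cubes inductively via $(p,t)\mapsto tp+(1-t)x_n$ on $\conv(\{x_0,\dots,x_{n-1}\})\times(0,1]$, whereas you embed the simplex $\Prob_{n+1}$ directly by $\mu\mapsto\sum\mu(i)x_i$; this is a cosmetic difference. Second, and more substantively, your injectivity argument runs through a clean two-sided cancellation law, $tx+(1-t)y=tx'+(1-t)y\Rightarrow x=x'$ for $t>0$, extracted from Lemma \ref{lem:wtf} by the alternating-sequence trick (the paper proves that lemma but deploys it only inside Lemma \ref{lem:plane}). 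The paper instead proves injectivity by an ad hoc argument: it shows the coincidence sets $\mathcal{S}$, $\mathcal{S}_p$, $\mathcal{S}_q$ are closed, takes their suprema $a',b'<1$, and manufactures a contradiction with carefully chosen weights $\alpha=\frac{1-a'}{1-a'b'}$, $\beta=\frac{1-b'}{1-a'b'}$ and the identities $\alpha b'=1-\beta$, etc. Your cancellation law replaces all of that with routine applications of axioms \ref{abelian}, \ref{linearity}, \ref{dirac} and \ref{algebraic}, at the cost of needing $x_n\notin\plane(\{x_0,\dots,x_{n-1}\})$ already in the two-point base case (harmless, since $X$ is non-separable), and it isolates a reusable algebraic fact about convex-like structures. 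The one step you should spell out when writing this up is the rearrangement LHS $=t_nx_n+(1-t_n)w$: splitting the weight $s_n=t_n+(s_n-t_n)$ uses axiom \ref{linearity} read right-to-left followed by axiom \ref{algebraic}, and the degenerate subcases $t_n=0$ and $s_n=1$ need the separate treatment you indicate via axiom \ref{dirac}.
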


\begin{proof} We'll show the contrapositive by proving that if $X$ isn't second countable, then we can find points $x_0, x_1, x_2, \ldots$ such that for every $n \in \N$, $\conv(\{x_0,\ldots, x_n \})$ contains a homeomorphic copy of the $n$-cube $[0,1]^n$.  The proof is by induction.

$(n = 1)$ Let $x_0 \neq x_1$ be any two distinct points in $X$.  It suffices to show that the map $[0,1] \to X$ given by $t \mapsto tx_0 + (1-t) x_1$ is injective (since this implies it's a homeomorphism onto its image).  Proceeding by contradiction, assume there exist numbers $0 \leq a < b \leq 1$ such that $a x_0 + (1-a) x_1 = b x_0 + (1-b) x_1$.   Consider the set $$\mathcal{S} = \{ t \in [0,1] : t x_0 + (1-t) x_1 = a x_0 + (1-a) x_1 \}.$$ Metric compatibility (axiom \ref{metric}) implies $\mathcal{S}$ is closed, while algebraic compatibility (axiom \ref{algebraic}) implies $\mathcal{S}$ is a convex subset of $[0,1]$. Hence $\mathcal{S} = [a',b']$ for some $a',b' \in [0,1]$.  Since $x_0 \neq x_1$, it can't be the case that $a' = 0$ and $b'=1$, so let's assume $b' < 1$. (We leave the other case to the reader as it is very similar.) Choose a number $0< \alpha < 1$ such that $$a' < \alpha a' + (1-\alpha) 1 < b'.$$  Since $b' < \alpha b + (1 - \alpha)1$, we have that $$(\alpha b' + (1-\alpha)) x_0 + (\alpha - \alpha b') x_1 \neq a x_0 + (1-a) x_1.$$  But this is a contradiction, because algebraic compatibility implies that
\begin{align*}
(\alpha b' + (1-\alpha)) x_0 + (\alpha - \alpha b') x_1 & = \alpha( b' x_0 + (1-b') x_1) + (1-\alpha) x_0 \\
&= \alpha( a' x_0 + (1-a') x_1) + (1-\alpha) x_0\\
& = (\alpha a' + (1-\alpha)) x_0 + (\alpha - \alpha a') x_1\\
& = a x_0 + (1-a) x_1.\\
\end{align*}

(Induction step) Assume we have $x_0, \ldots, x_{n-1} \in X$ such that $\conv(\{x_0, \ldots, x_{n-1} \} )$ contains a homeomorphic copy of $[0,1]^{n-1}$. Since we're assuming $X$ isn't separable, Lemma \ref{lem:plane} ensures that we can find a point $x_n \notin \plane(\{x_0, \ldots, x_{n-1} \} )$.  Define a map $$\conv(\{x_0, \ldots, x_{n-1} \} ) \times [0,1] \to X$$ by $(p,t) \mapsto tp + (1-t) x_n$, and it suffices to show this map is injective on $\conv(\{x_0, \ldots, x_{n-1} \} ) \times (0,1]$ (since its restriction to $[0,1]^{n-1} \times [1/2, 1] \cong [0,1]^n$ will be a homeomorphism).  Again proceeding by contradiction, assume there exist points $(p,a) \neq (q,b)$ such that $ap + (1-a) x_n = bq + (1-b) x_n$.  By the proof of the $n=1$ case, $p \neq q$.

Consider the (nonempty) sets $$\mathcal{S}_p = \{ t \in [0,1] : \exists s \in [0,1] \textrm{ such that } t p + (1-t) x_n = s q + (1-s) x_n \}$$ and $$\mathcal{S}_q = \{ s \in [0,1] : \exists t \in [0,1] \textrm{ such that } t p + (1-t) x_n = s q + (1-s) x_n \}.$$  Metric compatibility implies both $\mathcal{S}_p$ and $\mathcal{S}_q$ are closed, so let $a' := \sup  \mathcal{S}_p \in \mathcal{S}_p$ and $b' := \sup  \mathcal{S}_q \in \mathcal{S}_q$.  Since $p \neq q$ and $x_n \notin \plane(\{x_0, \ldots, x_{n-1} \} )$, $1 \notin \mathcal{S}_p$ and $1 \notin \mathcal{S}_q$, i.e., $a' < 1$ and $b'  < 1$.  Hence, letting $$\alpha = \frac{1- a'}{1 - a' b'} \textrm{  and  } \beta = \frac{1- b'}{1 - a' b'},$$ we have that $\alpha a' + (1-\alpha) \notin  \mathcal{S}_p$ and $\beta b' + (1-\beta) \notin  \mathcal{S}_q$.  This, together with the algebraic identities $$\alpha b' = 1 - \beta, \alpha(1-b') = \beta(1-a') \textrm{ and } 1- \alpha = a' \beta,$$ give our contradiction because algebraic compatibility gives the following equalities:
\begin{align*}
(\alpha a' + (1-\alpha)) p + (\alpha - \alpha a') x_n & = \alpha( a' p + (1-a') x_n) + (1-\alpha) p \\
&= \alpha( b' q + (1-b') x_n) + (1-\alpha) p \\
&= \beta (a' p + (1-a') x_n) + (1- \beta) q\\
&= \beta (b' q + (1-b') x_n) + (1 - \beta) q\\
&= (\beta b' + (1-\beta)) q + (\beta - \beta b') x_n.
\end{align*}
\end{proof}

%%%%%%%%%%%%%%%%%%%%%%%%%%%%%%%%%%%%%%%%%%%%%
\section{Technical Facts}
\label{sec:technical}

It turns out that $\Hom(N, \R)$ has a natural convex-like structure, in the sense of Definition \ref{defn:convex}, but proving this requires a number of technical preliminaries.

%%%%%%%%%%%%%%%
\subsection{Liftable isomorphisms of corners of $\R$}

It is well-known that all unital endomorphisms of $R$ are approximately inner. (This follows easily from the fact -- essentially due to Murray and von Neumann -- that there is a unique unital embedding of $M_n(\mathbb{C})$ into $R$, up to unitary conjugation.)  It follows that ``liftable" automorphisms of $\R$ are $\aleph_0$-locally inner, i.e., if $\Theta \colon \R \to \R$ lifts to an automorphism of the form $\theta = (\theta_n)_{n \in \N} \in \mathrm{Aut}( \ell^\infty (\N,R))$ (where $\theta_n \in \mathrm{Aut}(R)$), then on every separable subalgebra of $\R$, $\Theta$ is just conjugation by some unitary $u \in \R$ (though $\Theta$ will rarely be inner on all of $\R$ -- see \cite[Theorem 2.5]{sherman}). Here we establish a  technical, but useful extension of this fact for ``liftable" isomorphisms between corners of $\R$.

\begin{sublem}
\label{lem:corner}
Let $p, q \in R$ be projections of the same trace and $\theta\colon pRp \to qRq$ be a unital $*$-homomorphism (i.e.\ $\theta(p) = q$).  Then, there exist partial isometries $v_n \in R$ such that $v_n^* v_n = p, v_n v_n^* = q$ and $\theta(x) = \lim_{n\to \infty} v_n x v_n^*$ for all $x \in pRp$, where the limit is taken in the 2-norm.
\end{sublem}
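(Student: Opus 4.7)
The plan is to reduce the claim to the fact recalled at the start of the subsection: every unital $*$-endomorphism of the hyperfinite II$_1$-factor $R$ is approximately inner in 2-norm. Since $\tau(p)=\tau(q)$ and $R$ is a II$_1$-factor, $p$ and $q$ are Murray--von Neumann equivalent, so I can fix at the outset a partial isometry $w\in R$ with $w^*w=p$ and $ww^*=q$. Conjugation by $w$ is a canonical unital $*$-isomorphism $pRp\to qRq$, so the map
$$\phi\colon pRp\to pRp,\qquad \phi(x):=w^*\theta(x)w,$$
is a unital $*$-endomorphism of the corner $pRp$.

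Since $p$ is a nonzero projection in the hyperfinite II$_1$-factor $R$, the corner $pRp$ is again isomorphic to $R$. Hence $\phi$ is a unital endomorphism of a copy of $R$, and therefore approximately inner in 2-norm: there exist unitaries $u_n\in pRp$ (so $u_n^*u_n=u_nu_n^*=p$) with $\|\phi(x)-u_nxu_n^*\|_2\to 0$ for every $x\in pRp$.

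I would then set $v_n:=wu_n$ and verify the required data by direct calculation. Using that $w$ is a partial isometry with $w^*w=p$ (so in particular $wp=w$ and $u_n^*u_n=p$), one gets $v_n^*v_n=u_n^*w^*wu_n=u_n^*pu_n=u_n^*u_n=p$ and $v_nv_n^*=wu_nu_n^*w^*=wpw^*=ww^*=q$. For the convergence statement, the key point is that conjugation by $w$ is isometric in 2-norm on $pRp$: for $y\in pRp$,
$$\|wyw^*\|_2^2=\tau(wy^*yw^*)=\tau(y^*y\,w^*w)=\tau(y^*y\,p)=\|y\|_2^2.$$
Applying this with $y=\phi(x)-u_nxu_n^*$ gives
$$\|\theta(x)-v_nxv_n^*\|_2=\|w(\phi(x)-u_nxu_n^*)w^*\|_2=\|\phi(x)-u_nxu_n^*\|_2\longrightarrow 0,$$
which is exactly the claimed convergence.

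There is no serious obstacle here; the only care needed is the bookkeeping of partial isometries and the short computation that $w$-conjugation preserves the 2-norm on the corner. The real content of the lemma --- approximate innerness of endomorphisms of $R$ --- is precisely what the introductory paragraph of the subsection recalls, so the lemma is genuinely just the ``corner'' reformulation of that fact.
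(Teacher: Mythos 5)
Your proof is correct and is essentially the paper's argument: both conjugate $\theta$ by a fixed partial isometry implementing the equivalence $p\sim q$ to obtain a unital endomorphism of the corner $pRp\cong R$, invoke approximate innerness of endomorphisms of $R$ to get unitaries $u_n\in pRp$, and set $v_n$ to be the product of the partial isometry with $u_n$. The only difference is the (immaterial) orientation convention for $w$, and your explicit check that $w$-conjugation is a 2-norm isometry on the corner, which the paper leaves implicit.
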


\begin{proof}  Let $w \in R$ be a partial isometry such that $w^*w = q$ and $ww^* = p$, and consider the unital endomorphisms $\Ad w \circ \theta \colon pRp \to pRp$.  Since $R$ is hyperfinite we can find unitaries $u_n \in pRp$ such that $w\theta(x) w^* = \lim_{n\to \infty} u_n x u^*_n$ for all $x \in pRp$.  Defining $v_n := w^* u_n$ completes the proof.
\end{proof}

The following proposition is an indispensable tool in our analysis. (In particular, we will sometimes cite it when claiming that unital embeddings into $\R$ that differ by ``liftable" isomorphisms of $\R$ are actually unitarily equivalent.)

\begin{subprop}
\label{prop:corner}
Let $p,q \in \R$ be projections of the same trace, $M \subset p\R p$ be a separable von Neumann subalgebra and $\Theta \colon p\R p \to q \R q$ be a unital $*$-homomorphism.  Assume there exist projections $(p_i) , (q_i) \in \ell^\infty(\N, R)$ which are lifts of $p$ and $q$, respectively, such that $\tau_R(p_i) = \tau_R(q_i) = \tau_{\R}(p)$ for all $i \in \N$, and there exist unital $*$-homomorphisms $\theta_i \colon p_i Rp_i \to q_i Rq_i$ such that $(\theta_i(x_i))$ is a lift of $\Theta(x)$, whenever $(x_i) \in \prod p_i Rp_i$ is a lift of $x \in M$. Then, there exists a partial isometry $v \in \R$ such that $v^* v = p, vv^* = q$ and $\Theta(x) = vxv^*$ for all $x \in M$.
\end{subprop}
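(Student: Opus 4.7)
The plan is to apply Sublemma \ref{lem:corner} pointwise at each coordinate $i$ to approximate each $\theta_i$ by inner partial-isometry conjugations in $R$, and then to diagonalize these approximations against a countable $2$-norm dense subset of $M$ to produce a single partial isometry $v \in \R$ that implements $\Theta$ on $M$.

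Concretely, I would first fix a countable $2$-norm dense sequence $\{x^{(k)}\}_{k \in \N}$ in the unit ball of $M$ and, for each $k$, a contractive lift $(x^{(k)}_i)_i \in \prod_i p_i R p_i$; by hypothesis, $(\theta_i(x^{(k)}_i))_i$ then lifts $\Theta(x^{(k)})$. For each $i$, Sublemma \ref{lem:corner} furnishes partial isometries $v_{i,n} \in R$ with $v_{i,n}^* v_{i,n} = p_i$, $v_{i,n} v_{i,n}^* = q_i$, and $\|\theta_i(y) - v_{i,n} y v_{i,n}^*\|_2 \to 0$ in the $2$-norm of $R$ as $n \to \infty$, for \emph{every} $y \in p_i R p_i$. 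Crucially, the same sequence $v_{i,n}$ works uniformly in $y$, so I may pick $n(i)$ large enough that
$$\|v_{i,n(i)} x^{(k)}_i v_{i,n(i)}^* - \theta_i(x^{(k)}_i)\|_2 < 1/i \qquad \text{for every } 1 \leq k \leq i.$$
Setting $v := [(v_{i,n(i)})_i] \in \R$, we immediately have $v^*v = p$ and $vv^* = q$, and for each fixed $k$, once $i \geq k$ the estimate above forces
$$\|v x^{(k)} v^* - \Theta(x^{(k)})\|_2 \;=\; \lim_{i \to \omega} \|v_{i,n(i)} x^{(k)}_i v_{i,n(i)}^* - \theta_i(x^{(k)}_i)\|_2 \;=\; 0,$$
so $v x^{(k)} v^* = \Theta(x^{(k)})$ in $\R$ for every $k$.

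To finish, I would invoke $2$-norm continuity. Since $p\R p$ and $q\R q$ are II$_1$-factors with the same normalized trace (because $\tau_\R(p) = \tau_\R(q)$), the unital $*$-homomorphism $\Theta$ preserves $\tau_\R$ and is a $2$-norm isometry on $p\R p$; a direct calculation using $v^*v = p$ shows that $x \mapsto v x v^*$ is also a $2$-norm isometry on $p\R p$. Hence the identity $vxv^* = \Theta(x)$ extends by continuity from the dense sequence $\{x^{(k)}\}$ to all of $M$.

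The only place requiring care is the diagonal step: the Sublemma is applied at each level $i$ in the $2$-norm of $R$, but we need the resulting partial isometries to glue into an \emph{element} of $\R$ that works simultaneously against every $x^{(k)}$. The reason this succeeds is precisely that at stage $i$ only finitely many elements $x^{(k)}_i$ (with $k \leq i$) need be handled, and the Sublemma provides a single $v_{i,n}$ uniform in the argument. Apart from this bookkeeping, the argument is a routine ultraproduct diagonalization built on top of Sublemma \ref{lem:corner}.
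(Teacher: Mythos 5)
Your argument is correct and is essentially the paper's own: both apply Sublemma \ref{lem:corner} coordinate-wise to produce partial isometries $v_i \in R$ approximating $\theta_i$ on finitely many lifted elements at stage $i$, and then drop the resulting sequence to a partial isometry in $\R$. The only cosmetic difference is that you carry out the diagonalization against a 2-norm dense sequence and close up by isometry of $\Theta$ and $\Ad v$, whereas the paper proves the singly-generated case explicitly and leaves the same diagonal bookkeeping (against a generating set) to the reader.
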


\begin{proof}  Let us first assume $M = W^*(X)$ is singly generated and let $(x_i) \in \prod p_i Rp_i$ be a lift of $X$. By the previous lemma, we can find partial isometries $v_i \in R$ such that $v_i^*v_i = p_i, v_iv_i^* = q_i$ and $\| \theta_i(x_i) - v_i x_i v_i^* \|_2 < 1/i$.  Clearly $(v_i) \in \ell^\infty(\N, R)$ drops to a partial isometry $v \in \R$ with support $p$ and range $q$; we must check that $\Theta(X) = vXv^*$.  But for every $\e > 0$ the set $$S = \{ i \in \N : \| \theta_i(x_i) - v_ix_i v_i^* \|_2 \}$$ contains the set $\{ n \in \N : n \geq i_0 \}$ for every $i_0 > 1/\e$ -- hence $S \in \omega$, which completes the proof in the singly generated case.

The reader should have no trouble extending to the general case -- simply be more careful when picking the $v_i$'s, arranging inequalities of the form  $\| \theta_i(Y_i) - v_i Y_i v_i^* \|_2 < 1/i$ on a finite set of $Y_i$'s corresponding to lifts of a finite subset of a generating set of $M$.
\end{proof}

%%%%%%%%%%%%%%
\subsection{Commutants of separable subalgebras of $R^\omega$}

Central sequence considerations show that the relative commutant of any separable subalgebra $A \subset \R$ is always large (e.g., diffuse). But we need to push this a bit further.

\begin{subdefn}
\label{defn:amp}
Fix an isomorphism $\theta \colon R\bar{\otimes} R\to R$ and a unital $*$-homomorphism $\pi\colon N\to \R$.  For each $x \in N$, let $(X_i) \in \ell^\infty(\N, R)$ be a lift of $\pi(x)$ and define $1\otimes \pi\colon N \to \R$ by the diagram
$$\begin{CD}
\ell^\infty(\N, R\bar{\otimes}R) @>>> (R\bar{\otimes}R)^\omega\\
@VV\oplus_{\N} \theta V @VV \cong V\\
\ell^\infty(\N, R) @>>> \R
\end{CD}$$
That is, $(1\otimes \pi)(x)$ is the image of the element $(1\otimes X_i) \in \ell^\infty(\N, R\bar{\otimes}R)$ down in $\R$.
\end{subdefn}

\begin{sublem}  $[1\otimes \pi]$ is independent of the isomorphism $\theta$. Moreover, if $[\pi] = [\pi']$ then $ [1\otimes \pi] =[1\otimes \pi']$.
\end{sublem}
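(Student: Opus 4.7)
The plan is to handle the two assertions separately, both by reducing to Proposition \ref{prop:corner} or a direct unitary implementation.

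For independence of $\theta$, let $\theta, \theta' \colon R\bar\otimes R \to R$ be two isomorphisms. Applying each componentwise to $\ell^\infty(\N, R\bar\otimes R)$ and passing to the quotient yields isomorphisms $\Theta, \Theta'\colon (R\bar\otimes R)^\omega \to \R$. Writing $\iota\colon N \to (R\bar\otimes R)^\omega$ for the map $x \mapsto [(1\otimes X_i)]$ (which is well-defined since $\|1\otimes Y\|_2 = \|Y\|_2$ for $Y\in R$, so different lifts of $\pi(x)$ yield the same element of $(R\bar\otimes R)^\omega$), the two competing versions of $1\otimes \pi$ are $\Theta\circ\iota$ and $\Theta'\circ\iota$. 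The composition $\Phi := \Theta\circ(\Theta')^{-1}\colon \R \to \R$ is liftable: any lift $(y_i)\in\ell^\infty(\N,R)$ of $y\in\R$ is sent to a lift of $\Phi(y)$ by applying the (constant-in-$i$) automorphism $\theta\circ(\theta')^{-1}\in\Aut(R)$ coordinatewise. I would then invoke Proposition \ref{prop:corner} with $p=q=1$, $M = (\Theta'\circ\iota)(N)$ (separable since $N$ is), and component maps $\theta_i = \theta\circ(\theta')^{-1}$. This produces a unitary $v\in \R$ with $\Phi(z) = vzv^*$ for $z\in M$, whence $(\Theta\circ\iota)(y) = v(\Theta'\circ\iota)(y)v^*$ for all $y\in N$, so the two classes coincide.

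For invariance under $[\pi]=[\pi']$, choose a unitary $u\in \R$ with $\pi'(x)=u\pi(x)u^*$ and lift $u$ to a sequence $(U_i)\in\ell^\infty(\N,R)$ consisting of unitaries (possible by a standard polar-decomposition / functional-calculus argument). If $(X_i)$ is a lift of $\pi(x)$, then $(U_iX_iU_i^*)$ is a lift of $\pi'(x)$, and
\[
1\otimes (U_iX_iU_i^*) = (1\otimes U_i)\,(1\otimes X_i)\,(1\otimes U_i)^*.
\]
Applying $\theta$ coordinatewise and passing to $\R$, the unitary $w := [(\theta(1\otimes U_i))_i]\in \R$ satisfies $(1\otimes\pi')(x) = w\,(1\otimes\pi)(x)\,w^*$ for all $x\in N$, giving $[1\otimes\pi]=[1\otimes\pi']$.

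There is no real obstacle here: both parts reduce cleanly to machinery already in place. The only mild care required is the bookkeeping for the first claim — namely, recognizing that the componentwise automorphism $\theta\circ(\theta')^{-1}$ is precisely the sort of liftable data Proposition \ref{prop:corner} was designed to handle, once one takes $p=q=1$. The second claim does not even need that proposition, only a lift of the implementing unitary.
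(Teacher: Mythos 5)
Your proposal is correct and follows the paper's own route: the first claim is proved there by exactly the same application of Proposition \ref{prop:corner} with $p=q=1$ to the coordinatewise liftable isomorphism $\theta_2\circ\theta_1^{-1}$, and the second claim is dismissed in the paper as obvious because the maps into $(R\bar{\otimes}R)^\omega$ induced by $\pi$ and $\pi'$ are unitarily equivalent, which is precisely the lifted-unitary computation you spell out. No substantive differences.
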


\begin{proof} Let $\theta_1, \theta_2\colon R\bar{\otimes} R\to R$ be isomorphisms and $(1\otimes \pi)_1, (1\otimes \pi)_2$ be the resulting embeddings.  Applying Proposition \ref{prop:corner}  to the isomorphisms $\theta_2 \circ \theta_1^{-1}$ the result follows -- indeed, just think about the diagram
$$\begin{CD}
\ell^\infty(\N, R) @>>> \R\\
@VV\oplus_{\N} \theta_1^{-1} V @VV \cong V\\
\ell^\infty(\N, R\bar{\otimes}R) @>>> (R\bar{\otimes}R)^\omega\\
@VV\oplus_{\N} \theta_2 V @VV \cong V\\
\ell^\infty(\N, R) @>>> \R,
\end{CD}$$
let $p = q = 1$ and $\Theta$ be the isomorphism gotten by composing the arrows on the right.

The second statement is obvious since the maps into $(R\bar{\otimes}R)^\omega$ induced by $\pi$ and $\pi'$ are clearly unitarily equivalent.
\end{proof}

\begin{sublem} For every $\pi\colon N \to \R$ we have $[\pi] = [1\otimes \pi]$.
\end{sublem}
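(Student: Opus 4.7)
The plan is to set things up so that Proposition \ref{prop:corner} applies directly with $p = q = 1$. Fix the isomorphism $\theta \colon R \bar{\otimes} R \to R$ used in Definition \ref{defn:amp}, and define a unital $*$-endomorphism $\psi \colon R \to R$ by $\psi(r) = \theta(1 \otimes r)$. Since $\tau_R \circ \psi = \tau_R$ (because $\theta$ is trace preserving and $1 \otimes r$ has trace $\tau_R(r)$ in $R \bar{\otimes} R$), applying $\psi$ coordinate-wise on $\ell^\infty(\N,R)$ is $2$-norm continuous, and hence descends to a unital $*$-endomorphism $\Theta \colon \R \to \R$.

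Next I would verify that $\Theta \circ \pi = 1\otimes \pi$. Given $x \in N$, pick any lift $(X_i) \in \ell^\infty(\N,R)$ of $\pi(x)$. Then on the one hand $(\psi(X_i)) = (\theta(1 \otimes X_i))$ is a lift of $\Theta(\pi(x))$, and on the other it is, by Definition \ref{defn:amp}, a lift of $(1 \otimes \pi)(x)$; so the two elements agree in $\R$.

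The final step is to invoke Proposition \ref{prop:corner} with $p = q = 1$, $M = \pi(N)$, and the endomorphism $\Theta$ lifted by $\theta_i := \psi$ for every $i$ (the hypotheses on lifts being trivially satisfied by construction). The proposition yields a partial isometry $v \in \R$ with $v^*v = vv^* = 1$, i.e., a unitary, such that
$$v \pi(x) v^* = \Theta(\pi(x)) = (1 \otimes \pi)(x)$$
for every $x \in N$. Hence $[\pi] = [1\otimes \pi]$.

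The only subtle point is the passage from a coordinate-wise endomorphism of $\ell^\infty(\N,R)$ to a well-defined map on $\R$, but this is automatic once one notes the trace-preservation of $\psi$. The rest is just bookkeeping to match the hypotheses of the earlier proposition.
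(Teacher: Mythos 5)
Your proof is correct and is essentially the paper's own argument: the paper applies Proposition \ref{prop:corner} (with $p=q=1$, lifted coordinate-wise by $\theta\circ\gamma$ where $\gamma(x)=1\otimes x$) to the very same endomorphism you call $\psi=\theta(1\otimes\,\cdot\,)$, merely packaged as a commutative diagram. Nothing to add.
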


\begin{proof}  Fix an isomorphism $\theta \colon R\bar{\otimes}R \to R$ and let $\gamma \colon R \to R\bar{\otimes}R$ be defined by $\gamma(x) = 1\otimes x$. Now apply Proposition \ref{prop:corner}, only this time to the diagram
$$\begin{CD}
\ell^\infty(\N, R) @>>> \R\\
@VV\oplus_{\N} \gamma V @VV \text{unital inclusion}V\\
\ell^\infty(\N, R\bar{\otimes}R) @>>> (R\bar{\otimes}R)^\omega\\
@VV\oplus_{\N} \theta V @VV \cong V\\
\ell^\infty(\N, R) @>>> \R.
\end{CD}$$
\end{proof}

\begin{subrem}[Reformulation]
\label{rem:reformulate}
There is an obvious map $$\R \odot \R \to (R\bar{\otimes}R)^\omega$$ from the \emph{algebraic} tensor product $\R \odot \R$ to $(R\bar{\otimes}R)^\omega$, and it extends to the von Neumann algebraic tensor product $\R \bar{\otimes} \R$ (since the trace on $(R\bar{\otimes}R)^\omega$ evidently restricts to the tensor product trace on $\R \odot \R$).  Thus we have a canonical inclusion $\R \bar{\otimes} \R \subset (R\bar{\otimes}R)^\omega$, and hence any isomorphism $\theta \colon R\bar{\otimes}R \to R$ induces an embedding $\R \bar{\otimes} \R \subset \R$.

This subsection can be summarized as follows: given an embedding $\pi \colon N \to \R$ and an inclusion $\R \bar{\otimes} \R \subset \R$ induced by an isomorphism  $\theta \colon R\bar{\otimes}R \to R$, $$[\pi] = [1\otimes \pi],$$ where $1\otimes \pi \colon N \to 1 \otimes \R \subset \R \bar{\otimes} \R \subset \R$ is the obvious map. (In particular, the class of $1 \otimes \pi$ is independent of $\theta$.)

This point of view -- i.e., considering a nearly canonical embedding $\R \bar{\otimes} \R \subset \R$ -- will be very convenient.
\end{subrem}

%%%%%%%%%%%%%%
\subsection{Cutting by projections in the commutant} If $p \in \R$ is a projection, then the corner $p  \R p$ is isomorphic to $\R$ (i.e., the fundamental group of $\R$ is $\mathbb{R}_+$).  Thus cutting a representation $N \to \R$ by a commuting projection can be viewed as another representation into $\R$.  But how one \emph{views} this -- i.e., how one chooses the isomorphism $p \R p \cong \R$ -- can matter, so we'll stick to nice identifications.

\begin{subdefn}
\label{defn:standard}
Let $p \in R^\omega$ be a projection.  A \emph{standard} isomorphism $\theta_p\colon pR^\omega p \to R^\omega$ is any map gotten in the following way: Lift $p$ to a projection $(p_n) \in \ell^\infty(\N, R)$ such that $\tau_R(p_n) = \tau_{R^\omega}(p)$ for all $n \in \N$, fix isomorphisms $\theta_n\colon p_n Rp_n \to R$ and define $\theta_p$ to be the isomorphism on the right hand side of the commutative diagram
$$\begin{CD}
\ell^\infty(\N, p_nRp_n) @>>> p\R p\\
@VV\oplus \theta_nV @VV\cong V\\
\ell^\infty(\N, R) @>>> \R.
\end{CD}$$
\end{subdefn}

\begin{subdefn}
\label{defn:cutdown}
Define the \emph{cut-down} of $\pi\colon N \to \R$ by a projection $p \in\pi(N)' \cap \R$  to be the map $N \to \R, x \mapsto \theta_p (p\pi(x))$, where $\theta_p$ is a standard isomorphism. The equivalence class of this embedding is independent of the standard isomorpism $\theta_p$ (see the next lemma), hence will be denoted by $[\pi_p]$.
\end{subdefn}

\begin{sublem}
\label{lem:cut-down}
$[\pi_p]$ is independent of $\theta_p$.  If $u \in \R$ is unitary then $[\pi_p] = [\Ad u \circ \pi_{\Ad u(p)}]$.
\end{sublem}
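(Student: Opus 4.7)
The plan is to prove the two assertions separately; both ultimately reduce to an application of Proposition \ref{prop:corner}.

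For the independence of $[\pi_p]$ from the choice of standard isomorphism, suppose we have two candidates $\theta_p$ and $\theta_p'$ built respectively from lifts $(p_n), (p_n')$ of $p$ and from componentwise isomorphisms $\theta_n \colon p_n R p_n \to R$, $\theta_n'\colon p_n' R p_n' \to R$. The first move is to reduce to the case of a common lift. Since both sequences represent $p$ with the correct trace, $\lim_\omega \|p_n - p_n'\|_2 = 0$, and a standard perturbation fact about nearby projections in $R$ produces unitaries $w_n \in R$ with $w_n p_n w_n^* = p_n'$ and $\lim_\omega \|w_n - 1\|_2 = 0$, so that $(w_n)$ represents $1 \in \R$. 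Replace $\theta_n'$ by $\tilde\theta_n := \theta_n' \circ \Ad w_n \colon p_n R p_n \to R$; a direct check (relying on $(w_n)$ representing $1$ to erase the $w_n$-conjugation at the ultraproduct level) shows that the standard isomorphism built from the \emph{same} lift $(p_n)$ and the family $(\tilde\theta_n)$ coincides with $\theta_p'$ on the nose.

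With both isomorphisms now associated to the lift $(p_n)$, the automorphisms $\alpha_n := \theta_n' \circ \theta_n^{-1}$ of $R$ exhibit precisely the liftability hypothesis of Proposition \ref{prop:corner} for the automorphism $\Theta := \theta_p' \circ \theta_p^{-1}$ of $\R$. Applying that proposition with both projections taken equal to $1$ and with separable subalgebra $M := \theta_p(p\pi(N))$ yields a unitary $u \in \R$ with $\Theta|_M = \Ad u|_M$, hence $\theta_p'(p\pi(x)) = u\,\theta_p(p\pi(x))\,u^*$ for all $x \in N$.

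For the equivariance claim, read $\Ad u \circ \pi_{\Ad u(p)}$ as the cut-down of the embedding $\Ad u \circ \pi$ by the projection $upu^*$, which commutes with $u\pi(N)u^*$; the plan is to give matching standard isomorphisms that make the two cut-downs \emph{literally} equal. Lift $u$ to unitaries $(u_n)\in \ell^\infty(\N,R)$; fix any standard isomorphism $\theta_p$ arising from a lift $(p_n)$ with isomorphisms $\theta_n$; and build a standard isomorphism for $upu^*$ from the lift $(u_n p_n u_n^*)$ together with the conjugated isomorphisms $\theta_n \circ \Ad u_n^* \colon u_n p_n u_n^* R u_n p_n u_n^* \to R$. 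For any $x \in N$, if $(P_n) \in \prod p_n R p_n$ lifts $p\pi(x)$, then $(u_n P_n u_n^*)$ lifts $upu^* \cdot u\pi(x) u^*$ inside the right corners, and the conjugations cancel to give $(\theta_n(P_n))$, which represents $\pi_p(x)$. So the two cut-downs agree on the nose for this particular choice, and part (1) handles any other choice of standard isomorphism.

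The main obstacle is the near-identity intertwiner step in the first part: both choosing $w_n$ so that $(w_n)$ represents exactly $1 \in \R$ (rather than some unitary merely commuting with $p$), and then verifying that the modified standard isomorphism agrees with $\theta_p'$ on the nose, not merely up to unitary equivalence. Once this trivialization of the ambiguity in the lift is in hand, Proposition \ref{prop:corner} does the rest of the work with essentially no effort.
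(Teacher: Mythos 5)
Your proof is correct and follows essentially the same route as the paper: both parts reduce to Proposition \ref{prop:corner} applied to a liftable isomorphism, and for the equivariance claim you use exactly the paper's device of taking $\theta_n\circ\Ad u_n^*$ as the standard isomorphism at $upu^*$ so the two cut-downs literally agree. Your only addition is the explicit reduction, via near-identity unitaries $w_n$, to the case where both standard isomorphisms come from a common lift $(p_n)$ of $p$ --- a point the paper's diagram silently assumes --- and that step is sound (and in fact one doesn't even need $w_n\to 1$, since any unitaries conjugating $p_n$ to $p_n'$ would change the cut-down only by an inner automorphism).
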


\begin{proof} If $\theta_n, \gamma_n\colon p_n Rp_n \to R$ are sequences of isomorphisms, we can apply Proposition \ref{prop:corner} to the diagram
$$\begin{CD}
\ell^\infty(\N, R) @>>> \R\\
@VV\oplus_{\N} \gamma_n^{-1} V @VV \cong V\\
\ell^\infty(\N, p_nRp_n) @>>> p\R p\\
@VV\oplus \theta_nV @VV\cong V\\
\ell^\infty(\N, R) @>>> \R.
\end{CD}$$
The second assertion also follows from Proposition \ref{prop:corner} and the diagram
$$\begin{CD}
\ell^\infty(\N, R) @>>> \R\\
@VV\oplus_{\N} \theta_n^{-1} V @VV \cong V\\
\ell^\infty(\N, p_nRp_n) @>>> p\R p\\
@VV\oplus \Ad u_nV @VV\Ad u V\\
\ell^\infty(\N, u_np_nRp_nu_n^*) @>>> upu^*\R upu^*\\
@VV\oplus \theta_n\circ\Ad u^*_nV @VV\theta_{upu^*} V\\
\ell^\infty(\N, R) @>>> \R,
\end{CD}$$
where the point is that $\oplus \theta_n\circ\Ad u^*_n$ defines a standard isomorphism $\theta_{upu^*} \colon upu^*\R upu^* \to \R$ which, by the first part, may be used to define $[\Ad u \circ \pi_{\Ad u(p)}]$.
\end{proof}

One might wonder whether $[\pi_p]$ depends on $p$, as opposed to the trace of $p$.  It turns out that it does.

\begin{subprop}
\label{prop:cut-down}
Given $\pi \colon N \to \R$ and  projections $p, q \in \pi(N)' \cap \R$ of the same trace, the following are equivalent:
\begin{enumerate}
\item $[\pi_p] = [\pi_q]$;
\item $p$ and $q$ are Murray-von Neumann equivalent inside $\pi(N)' \cap \R$;
\item There exists $v \in \R$ such that $v^*v = p, vv^* = q$ and $v\pi(x)v^* = q\pi(x)$ for all $x \in N$.
\end{enumerate}
\end{subprop}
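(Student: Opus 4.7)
The plan is to prove (2) $\iff$ (3) by a direct algebraic calculation and then close the loop (1) $\iff$ (3) via two applications of Proposition \ref{prop:corner}, one to a $*$-isomorphism between the corners $p\R p$ and $q\R q$ and one to an automorphism of $\R$. First I would dispatch (2) $\iff$ (3): the implication (2) $\Rightarrow$ (3) is immediate, since any $w \in \pi(N)' \cap \R$ with $w^*w = p$ and $ww^* = q$ satisfies $w\pi(x)w^* = \pi(x)ww^* = q\pi(x)$. For the converse, given $v$ as in (3), I would multiply $v\pi(x)v^* = q\pi(x)$ on the right by $v$ and use $v^*v = p \in \pi(N)'$, together with the partial-isometry identities $vp = v$ and $qv = v$, to conclude $v\pi(x) = \pi(x)v$; hence $v \in \pi(N)' \cap \R$ is the required partial isometry for (2).

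For (1) $\Rightarrow$ (3), fix a unitary $u \in \R$ with $u\pi_p(x)u^* = \pi_q(x)$ and form the $*$-isomorphism $\Psi = \theta_q^{-1} \circ \Ad u \circ \theta_p : p\R p \to q\R q$, which sends $p\pi(x)$ to $q\pi(x)$. Writing $\theta_p$ and $\theta_q$ in their standard form via isomorphisms $\theta_n : p_n R p_n \to R$ and $\phi_n : q_n R q_n \to R$, and lifting $u$ to unitaries $(u_n) \in \ell^\infty(\N, R)$, I would realize $\Psi$ as the ultraproduct of the $*$-isomorphisms $\phi_n^{-1} \circ \Ad u_n \circ \theta_n : p_n R p_n \to q_n R q_n$. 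Proposition \ref{prop:corner} applied to $\Psi$ with separable subalgebra $M = W^*(p\pi(N)) \subset p\R p$ then produces a partial isometry $w \in \R$ with $w^*w = p$, $ww^* = q$, and $\Psi(y) = wyw^*$ for $y \in M$; evaluating at $y = p\pi(x)$ and using $wp = w$ gives $w\pi(x)w^* = q\pi(x)$, which is (3).

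For (3) $\Rightarrow$ (1), consider instead the automorphism $\Theta = \theta_q \circ \Ad v \circ \theta_p^{-1}$ of $\R$. A quick computation using $vp = v$ shows $\Theta(\pi_p(x)) = \theta_q(v\pi(x)v^*) = \theta_q(q\pi(x)) = \pi_q(x)$, so it suffices to implement $\Theta$ by a unitary on the separable subalgebra $\pi_p(N)$. By Proposition \ref{prop:corner} (with $p=q=1$), this reduces to exhibiting $\Theta$ as the ultraproduct of automorphisms of $R$ of the form $\phi_n \circ \Ad v_n \circ \theta_n^{-1}$, where $(v_n)$ is a lift of $v$ to partial isometries with $v_n^* v_n = p_n$ and $v_n v_n^* = q_n$ \emph{on the nose}. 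I expect the main technical obstacle to be producing such a lift: starting from any lift $(\tilde v_n)$ of $v$, I would take the polar decomposition of $q_n \tilde v_n p_n$ to obtain partial isometries whose source and range projections converge to $p$ and $q$ in $2$-norm along $\omega$, and then conjugate by small unitary perturbations to adjust these projections to $p_n$ and $q_n$ exactly while preserving the ultraproduct limit. Once this lift is available, the composition $\phi_n \circ \Ad v_n \circ \theta_n^{-1}$ is an automorphism of $R$ (a composition of isomorphisms), and Proposition \ref{prop:corner} delivers the desired unitary $u$ with $u\pi_p(x)u^* = \pi_q(x)$, completing (1).
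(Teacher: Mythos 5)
Your proof is correct and takes essentially the same route as the paper: both directions of (1) $\Leftrightarrow$ (3) hinge on applying Proposition \ref{prop:corner} to the liftable isomorphism $\theta_q^{-1} \circ \Ad u \circ \theta_p$ (resp.\ $\theta_q \circ \Ad v \circ \theta_p^{-1}$), and (2) $\Leftrightarrow$ (3) is the direct computation you supply. The one organizational difference is that the paper picks the lifts $(p_n)$, $(q_n)$, $(v_n)$ \emph{simultaneously} (using the ``well-known'' joint-lift lemma) and only then builds the standard isomorphisms $\theta_n$, $\gamma_n$ from those $p_n$, $q_n$ --- this is legitimate because Lemma \ref{lem:cut-down} makes $[\pi_p]$ independent of the choice of standard isomorphism, and it avoids the retrofitting you describe. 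If you do commit to $p_n$, $q_n$ first, the fix at the end of your (3) $\Rightarrow$ (1) step should be to \emph{extend} the polar-decomposition partial isometry $v_n''$ by a small partial isometry between $p_n - v_n''^*v_n''$ and $q_n - v_n''v_n''^*$ (which have equal, vanishing trace), not to ``conjugate by small unitary perturbations'' --- conjugation preserves trace and cannot enlarge a proper subprojection of $p_n$ into $p_n$. This is a minor slip in an otherwise standard step and does not affect the soundness of the argument.
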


\begin{proof}  The equivalence of (2) and (3) is a routine computation left to the reader.

(3) $\Rightarrow$ (1):  As is well-known, we can find lifts $(p_n), (q_n), (v_n) \in \ell^\infty(\N, R)$ of $p,q$ and $v$, respectively, such that $(p_n)$ and $(q_n)$ are projections of trace $\tau(p)$ and each $v_n$ is a partial isometry such that $v_n^*v_n = p_n$ and $v_nv_n^* = q_n$. Now fix isomorphisms $\theta_n \colon p_n R p_n \to R$ and $\gamma_n \colon q_n Rq_n \to R$ and use them to construct the standard isomorphisms defining $\pi_p$ and $\pi_q$, respectively. Finally,
apply Proposition \ref{prop:corner} to the ``liftable" isomorphism on the right hand side of the following diagram
$$\begin{CD}
\ell^\infty(\N, R) @>>> \R\\
@VV\oplus_{\N} \theta_n^{-1} V @VV \cong V\\
\ell^\infty(\N, p_nRp_n) @>>> p\R p\\
@VV\oplus_{\N} \Ad v_n V @VV \Ad v V\\
\ell^\infty(\N, q_nRq_n) @>>> q\R q\\
@VV\oplus \gamma_nV @VV\cong V\\
\ell^\infty(\N, R) @>>> \R.
\end{CD}$$

(1) $\Rightarrow$ (3): Let $(p_n), (q_n) \in \ell^\infty(\N, R)$, $\theta_n \colon p_n R p_n \to R$ and $\gamma_n \colon q_n Rq_n \to R$ be as above and assume there exists a unitary $u \in \R$ such that $\pi_p = \Ad u\circ \pi_q$.  Let $(u_n) \in \ell^\infty(\N, R)$ be a lift of $u$. This time apply Proposition \ref{prop:corner} to the diagram
$$\begin{CD}
\prod_{i\in\N} p_iRp_i @>>> p\R p\\
@VV\oplus_{\N} \theta_n V @VV \cong V\\
\ell^\infty(\N, R) @>>> \R \\
@VV\oplus_{\N} \Ad u_n V @VV \Ad u V\\
\ell^\infty(\N, R) @>>> \R \\
@VV\oplus \gamma_nV @VV\cong V\\
\prod_{i\in\N} q_iRq_i @>>> q\R q,\\
\end{CD}$$
and we get the desired partial isometry $v \in \R$.
\end{proof}

\begin{rem} \label{rem:idontknow} For future reference we note that the implication (3) $\Rightarrow$ (1) above can be generalized (with identical proof) to the case of different embeddings.  That is, if $p \in \pi(N)'\cap \R, q \in \rho(N)'\cap \R$ are projections and there exists a partial isometry $v \in \R$ such that $v^*v = p, vv^* = q$ and $v\pi(x)v^* = q\rho(x)$ for all $x \in N$, then $[\pi_p] = [\rho_q]$.
\end{rem}

Another natural question is whether or not $[\pi] = [\pi_p]$.  The answer is ``sometimes" and we now describe an important case where equality holds.

\begin{subdefn}
\label{defn:tensorproj}
Given an isomorphism $\theta \colon R\bar{\otimes} R\to R$ and a projection $p \in \R$, let $p \otimes 1 \in \R$ be the projection coming from the induced inclusion $\R \bar{\otimes} \R \subset \R$ (as in Remark \ref{rem:reformulate}).\footnote{This notation is misleading, since $p \otimes 1$ depends on $\theta$.  But the same was true in Definition \ref{defn:amp}, and it won't cause any more trouble here than it did there.}
\end{subdefn}

\begin{subprop}
\label{prop:equivcut-down} For every $*$-homomorphism $\pi \colon N \to \R$,  projection $p \in \R$ and  isomorphism $\theta \colon R\bar{\otimes} R\to R$, we have $$[\pi] = [ 1\otimes \pi ] = [(1\otimes \pi)_{p \otimes 1}].$$
\end{subprop}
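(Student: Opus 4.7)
The plan is to observe that the first equality $[\pi] = [1\otimes \pi]$ is already the content of the preceding sublemma, so the new work is in proving $[1\otimes \pi] = [(1\otimes \pi)_{p\otimes 1}]$. For this, rather than constructing a unitary by hand, I will exhibit a particular standard isomorphism $\theta_{p\otimes 1}\colon (p\otimes 1)\R(p\otimes 1) \to \R$ with respect to which the cut-down $(1\otimes\pi)_{p\otimes 1}$ agrees with $1\otimes\pi$ \emph{on the nose} at the level of canonical representatives. Lemma \ref{lem:cut-down} then takes care of the fact that the class $[(1\otimes\pi)_{p\otimes 1}]$ does not depend on this choice.

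To build the preferred standard isomorphism, lift $p$ to projections $p_i \in R$ with $\tau_R(p_i) = \tau_\R(p)$; then $(\theta(p_i \otimes 1_R))_i$ is a lift of $p\otimes 1 \in \R$ with matching traces. Choose unital isomorphisms $\alpha_i\colon p_i R p_i \to R$ (possible by hyperfiniteness). Using the identification $\theta(p_i\otimes 1_R)\,R\,\theta(p_i\otimes 1_R) = \theta(p_iRp_i \bar{\otimes} R)$, set
\[
\theta_i \;:=\; \theta \circ (\alpha_i \otimes \id_R) \circ \theta^{-1} \colon \theta(p_i\otimes 1_R)\,R\,\theta(p_i\otimes 1_R) \longrightarrow R.
\]
These $\theta_i$ assemble into a standard isomorphism $\theta_{p\otimes 1}$ in the sense of Definition \ref{defn:standard}.

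Now for $x \in N$, let $(X_i) \in \ell^\infty(\N,R)$ be a lift of $\pi(x)$, so that $(\theta(1\otimes X_i))_i$ is the defining lift of $(1\otimes \pi)(x)$. A lift of $(p\otimes 1)\,(1\otimes \pi)(x) \in (p\otimes 1)\R(p\otimes 1)$ is then $\bigl(\theta(p_i \otimes 1_R)\,\theta(1\otimes X_i)\bigr)_i = (\theta(p_i \otimes X_i))_i$, and applying $\theta_i$ coordinatewise gives
\[
\theta_i\bigl(\theta(p_i\otimes X_i)\bigr) \;=\; \theta\bigl(\alpha_i(p_i)\otimes X_i\bigr) \;=\; \theta(1\otimes X_i),
\]
since $\alpha_i$ is unital. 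Hence the cut-down $(1\otimes \pi)_{p\otimes 1}(x)$ has the same canonical lift as $(1\otimes \pi)(x)$, so the two embeddings literally coincide and their classes agree. The only point one has to watch is the bookkeeping that shows the $\theta_i$ constitute a legitimate standard isomorphism for the lift $(\theta(p_i\otimes 1_R))$ of $p\otimes 1$ --- once this is verified the rest is formal, and invoking Lemma \ref{lem:cut-down} removes the dependence on the particular $\theta_{p\otimes 1}$ used.
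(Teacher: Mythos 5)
Your proof is correct and follows essentially the same route as the paper: both arguments choose a standard isomorphism $\theta_{p\otimes 1}$ built from isomorphisms of the form $(qRq)\bar{\otimes}R \cong R\bar{\otimes}R$ that act as the identity on the right tensor factor, so that $(1\otimes\pi)_{p\otimes 1}$ coincides with $1\otimes\pi$ on the nose, and then invoke the independence of the class from the choice of standard isomorphism. The only difference is that you write out explicitly the coordinatewise construction $\theta_i = \theta\circ(\alpha_i\otimes\id_R)\circ\theta^{-1}$ that the paper leaves to the reader.
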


\begin{proof}  Since both $p \otimes 1$ and $1 \otimes \pi$ arise from the $\theta$-induced inclusion $\R \bar{\otimes} \R \subset \R$ (as in Remark \ref{rem:reformulate}), the maps $1 \otimes \pi$ and $(1\otimes \pi)_{p \otimes 1}$ are more than just unitarily equivalent; if we choose our standard isomorphism carefully (as we're free to do in Definition \ref{defn:cutdown}), $1 \otimes \pi$ and $(1\otimes \pi)_{p \otimes 1}$ are actually equal -- on the nose.

More precisely, we can choose a standard isomorphism $\theta_{p\otimes 1} \colon (p\otimes 1) (R \bar{\otimes} R)^\omega (p\otimes 1) \to (R \bar{\otimes} R)^\omega$ in such a way that when restricted to the canonical subfactor $\R \bar{\otimes} \R \subset (R \bar{\otimes} R)^\omega$ it looks like $\theta_p \otimes \id \colon (p\R p) \bar{\otimes} \R \to \R \bar{\otimes} \R$, for some standard isomorphism $\theta_p \colon p \R p \to \R$. (We leave the details to the reader, but $\theta_{p\otimes 1}$ is constructed from isomorphisms of the form $(qRq) \bar{\otimes} R \cong R \bar{\otimes} R$ that leave the right tensor factor alone.)  Evidently one then has $$(1\otimes \pi)_{p \otimes 1}(x) := \theta_{p\otimes 1} \big( (p\otimes 1)(1\otimes \pi)(x) \big) = (1\otimes \pi)(x),$$ for all $x \in N$. In view of Remark \ref{rem:reformulate}, this completes the proof.
\end{proof}

\section{A convex-like structure on $\Hom(N, \R)$}
\label{sec:convex-like}

It's easy enough to imagine a convex-like stucture on $\Hom(N, \R)$.  Namely, given $*$-homomorphisms $\pi, \rho \colon N \to \R$ and $0 < t < 1$, take a projection $p_t \in (\pi(N) \cup \rho(N))' \cap \R$ such that $\tau(p_t) = t$ and define the convex combination $t\pi + (1-t)\rho$ to be $$x \mapsto \pi(x)p_t + \rho(x) p_t^{\perp}.$$ Unfortunately this procedure isn't well defined on classes in $\Hom(N, \R)$, so we have to be a bit more careful.

\begin{defn}
\label{defn:convex-like}
Given $[\pi_1], \ldots, [\pi_n] \in \Hom(N,\R)$ and $t_1,\ldots, t_n \in [0,1]$ such that $\sum t_i = 1$, we define $$\sum_{i=1}^n t_i [\pi_i] := [ \sum_{i=1}^n \theta_i^{-1}\circ\pi_i],$$ where $\theta_i \colon p_i \R p_i \to \R$ are standard isomorphisms and $p_1, \ldots, p_n \in \R$ are orthogonal projections such that $\tau(p_i) = t_i$ for $i \in \{1,\ldots, n\}$.
\end{defn}

\begin{lem} $\sum_{i=1}^n t_i [\pi_i]$ is well defined, i.e., independent of the projections $p_i$, the standard isomorphisms $\theta_i$ and the representatives $\pi_i$.
\end{lem}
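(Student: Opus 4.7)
The plan is to handle the three independences --- change of representatives $\pi_i$, change of standard isomorphisms $\theta_i$, and change of projections $p_i$ --- separately, in each case producing an explicit intertwining unitary $U \in \R$ by assembling local unitaries supported in orthogonal corners. Writing $\Pi := \sum_i \theta_i^{-1}\circ \pi_i$ and $\Pi'$ for the sum built from the alternate data, the assembly works because the corners $p_i\R p_i$ are orthogonal and their supports add to $1$ in $\R$ (by factoriality and $\sum \tau(p_i) = 1$). Once each type of change is handled, chaining them gives independence from fully arbitrary choices.

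Independence under change of representatives is essentially formal. If $\pi_i' = \Ad(u_i)\circ \pi_i$ for unitaries $u_i \in \R$, then since each $\theta_i^{-1}\colon \R \to p_i\R p_i$ is a $*$-isomorphism, $\theta_i^{-1}\circ \pi_i' = \Ad(\theta_i^{-1}(u_i))\circ (\theta_i^{-1}\circ \pi_i)$. The elements $\theta_i^{-1}(u_i)$ are unitaries in the mutually orthogonal corners $p_i\R p_i$, so $U := \sum_i \theta_i^{-1}(u_i)$ is a unitary in $\R$ satisfying $\Pi' = \Ad(U)\circ \Pi$.

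Independence from the choice of standard isomorphism will follow from Proposition \ref{prop:corner}. If $\theta_i$ and $\theta_i'$ are two standard isomorphisms $p_i\R p_i \to \R$, then $\theta_i' \circ \theta_i^{-1}$ is an automorphism of $\R$ whose defining data is a sequence of isomorphisms of $R$ (after first arranging, via a liftable inner perturbation in $R$, that $\theta_i$ and $\theta_i'$ use the same trace-preserving lift of $p_i$). Applied to the separable subalgebra $\pi_i(N) \subset \R$, Proposition \ref{prop:corner} then produces a unitary $v_i \in \R$ with $(\theta_i')^{-1}\circ \pi_i = \Ad((\theta_i')^{-1}(v_i)^*)\circ (\theta_i^{-1}\circ \pi_i)$ on $N$; as before, the elements $(\theta_i')^{-1}(v_i)^*$ live in orthogonal corners and assemble into a global unitary.

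The main obstacle is independence from the choice of projections. Given two orthogonal systems $(p_i)$ and $(p_i')$ with matching traces, I would pick partial isometries $w_i \in \R$ with $w_i^*w_i = p_i$ and $w_iw_i^* = p_i'$, so that $W := \sum_i w_i$ is a unitary in $\R$ carrying the first system of corners onto the second. For each $i$, the composition $\theta_i' \circ \Ad(w_i) \circ \theta_i^{-1}$ is an automorphism of $\R$ lifting to a sequence of automorphisms $\theta_{i,n}' \circ \Ad(w_{i,n}) \circ \theta_{i,n}^{-1}$ of $R$ (using trace-preserving lifts of $p_i, p_i'$ and a compatible partial-isometry lift $(w_{i,n})$ of $w_i$). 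Proposition \ref{prop:corner}, applied to the separable subalgebra $\pi_i(N) \subset \R$, then supplies a unitary $u_i \in p_i'\R p_i'$ with $w_i \cdot \theta_i^{-1}(\pi_i(x)) \cdot w_i^* = u_i \cdot (\theta_i')^{-1}(\pi_i(x)) \cdot u_i^*$ for all $x \in N$. Summing over $i$ and setting $U := (\sum_i u_i)^* W$ (a product of two unitaries of $\R$, the first being unitary because the $u_i$ live in orthogonal corners summing to $1$) yields $\Pi' = \Ad(U)\circ \Pi$, completing the proof.
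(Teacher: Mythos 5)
Your proposal is correct and uses the same essential machinery as the paper's proof: Proposition \ref{prop:corner} applied to liftable isomorphisms between corners, together with the assembly of a global intertwining unitary from pieces supported in the orthogonal corners $p_i\R p_i$. The paper merely handles all three changes in one stroke (applying Proposition \ref{prop:corner} directly to $\sigma_i^{-1}\circ\theta_i\colon p_i\R p_i\to q_i\R q_i$ to get partial isometries $v_i$, then setting $u=\sum_i\sigma_i^{-1}(u_i)v_i$), whereas you chain three separate reductions; the two arguments are otherwise the same.
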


\begin{proof}  Assume $\sigma_i \colon q_i \R q_i \to \R$ are standard isomorphisms, where the $q_i$'s are orthogonal projections of trace $t_i$, and $[\rho_i] = [\pi_i]$.  It suffices to show the existence of partial isometries  $v_i \in \R$ such that $v_i^*v_i = p_i, v_i v_i^* = q_i$ and $$v_i \theta_i^{-1} \circ \pi_i (x) v_i^* = \sigma_i^{-1} \circ \pi_i (x)$$ for all $x \in N$. Indeed, because $\rho_i = \Ad u_i \circ \pi_i$ for some unitaries $u_i$, one easily checks that  $u := \sum \sigma_i^{-1}(u_i)v_i$ is a unitary conjugating $\sum_{i=1}^n \theta_i^{-1}\circ\pi_i$ over to $\sum_{i=1}^n \sigma_i^{-1}\circ\rho_i$.

The existence of the desired partial isometries follows easily from Proposition \ref{prop:corner} (applied to the isomorphism $\sigma_i^{-1} \circ \theta_i \colon p_i \R p_i \to q_i \R q_i$) and the definition of standard isomorphism.
\end{proof}

Before proving that this definition of convex combination satisfies the axioms of Definition \ref{defn:convex}, perhaps a couple examples are in order.  The first illustrates the intuitive nature of our definition, while the second illustrates its flexibility (hence utility).

\begin{example}
\label{example:extreme}
Assume $p \in \pi(N)' \cap \R$ is a projection.  Then it is immediate from the definitions that $$[\pi] = \tau(p) [\pi_p] + \tau(p^{\perp}) [\pi_{p^\perp}].$$
\end{example}

Our next example requires some notation.

\begin{defn} Given $\pi \colon N \to \R$, an isomorphism  $\theta \colon R\bar{\otimes} R\to R$ and a projection $p \in \R$, let $p \otimes \pi \colon N \to (p \otimes 1) \R (p \otimes 1)$ be the representation $x \mapsto 1\otimes \pi(x) (p \otimes 1)$, where $p \otimes 1$ was defined in Definition \ref{defn:tensorproj}.\footnote{Note that $p \otimes \pi$ is not the same as $(1\otimes \pi)_{p\otimes 1}$ since these maps have different ranges.  On the other hand, they only differ by a standard isomorphism.}
\end{defn}

\begin{example}
\label{example:tensor}
Given an isomorphism  $\theta \colon R\bar{\otimes} R\to R$, $[\pi_1], \ldots, [\pi_n] \in \Hom(N,\R)$ and $t_1,\ldots, t_n \in [0,1]$ such that $\sum t_i = 1$, we have that $$\sum_{i=1}^n t_i [\pi_i] := [ \sum_{i=1}^n p_{i}\otimes \pi_i],$$ where $p_i \in \R$ are pairwise orthogonal projections of trace $t_i$ and the projections $\{ p_{t_i} \otimes 1\}$ and representations $\{ 1 \otimes \pi_i \}$ are all defined in terms of the $\theta$-induced inclusion  $\R \bar{\otimes} \R \subset \R$.

To see this one needs to verify that $p_i \otimes \pi_i$ can be identified with $\theta_i^{-1} \circ (1 \otimes \pi_i)$ for an appropriate standard isomorphism $(p_i \otimes 1) (R \bar{\otimes} R)^\omega (p_i \otimes 1) \to (R \bar{\otimes} R)^\omega$.  This, however, is easy if one considers isomorphisms of the form $\sigma \otimes \id_R \colon pRp \bar{\otimes} R \to R \bar{\otimes} R$.
\end{example}

\begin{prop}
\label{prop:convexity} For any separable II$_1$-factor $N$, $\Hom(N, \R)$ has a convex-like structure in the sense of Definition \ref{defn:convex}.
\end{prop}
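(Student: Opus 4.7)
The plan is to verify each of the five axioms of Definition \ref{defn:convex} by transporting the problem into a fixed inclusion $\R \bar\otimes \R \subset \R$ induced by an isomorphism $\theta \colon R\bar\otimes R \to R$, as in Remark \ref{rem:reformulate}. By Example \ref{example:tensor}, for any pairwise orthogonal projections $p_i \in \R$ with $\tau(p_i) = t_i$, we have
$$\sum_{i=1}^n t_i [\pi_i] = \Big[\sum_{i=1}^n p_i \otimes \pi_i\Big],$$
where the $p_i$ are identified with $p_i \otimes 1$ and the $\pi_i$ with $1 \otimes \pi_i$. All further computations will take place inside this fixed tensor-product picture, which removes the notational nuisance of juggling many unrelated standard isomorphisms.

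\textbf{Axioms (\ref{abelian}), (\ref{linearity}), (\ref{dirac}).} Commutativity holds because any two $n$-tuples of orthogonal projections in $\R$ with the same trace profile are unitarily conjugate, and such a conjugation carries $\sum p_i \otimes \pi_i$ to $\sum p_{\sigma(i)} \otimes \pi_{\sigma(i)}$. For linearity, if $\pi_1 = \pi_2$ then $p_1 \otimes \pi_1 + p_2 \otimes \pi_2 = (p_1 + p_2)\otimes \pi_1$ pointwise, and $\tau(p_1+p_2) = t_1+t_2$. The scalar identity axiom is exactly the statement $[1 \otimes \pi] = [\pi]$ established by the second sublemma in Section \ref{sec:technical} (which in turn rests on Proposition \ref{prop:corner}).

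\textbf{Axiom (\ref{algebraic}) (algebraic compatibility).} To compute $\gamma_\nu\bigl(\gamma_\mu(\pi_1,\ldots,\pi_n), \gamma_{\tilde\mu}(\tilde\pi_1,\ldots,\tilde\pi_m)\bigr)$, I would first split $\R$ by orthogonal projections $q_1,q_2$ of traces $\nu(1),\nu(2)$, identify each $q_j \R q_j$ with $\R$ via a standard isomorphism, and form the inner combinations inside $q_j \R q_j$ using sub-projections of traces $\nu(j)\mu(i)$ (resp.\ $\nu(j)\tilde\mu(j)$). Proposition \ref{prop:corner}, applied to the liftable isomorphisms identifying nested corners with $\R$, guarantees that this nested construction is unitarily equivalent to the single ambient sum
$$\sum_{i=1}^n p_{i,1} \otimes \pi_i \;+\; \sum_{j=1}^m p_{j,2} \otimes \tilde\pi_j,$$
which is precisely $\gamma_\eta$ for the concatenated tuple with the required weight $\eta$.

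\textbf{Axiom (\ref{metric}) (metric compatibility).} For the first inequality, given $\mu, \tilde\mu \in \Prob_n$, I would select orthogonal projection tuples $(p_i)$ and $(\tilde p_i)$ of the prescribed traces with $\|p_i - \tilde p_i\|_2 \leq |\mu(i) - \tilde\mu(i)|^{1/2}$. Then
$$\|{\textstyle\sum p_i \otimes \pi_i(a) - \sum \tilde p_i \otimes \pi_i(a)}\|_2 \;\leq\; \sum_i \|p_i - \tilde p_i\|_2 \cdot \|\pi_i(a)\|$$
and summing against the $\ell^2$-weights of Definition \ref{defn:metrics} together with the bounded diameter of $\Hom(N,\R)$ yields the required linear bound $C\|\mu - \tilde\mu\|$. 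For the second inequality, I would pick representatives $\pi_i,\rho_i$ and unitaries $u_i \in \R$ attaining $\dist([\pi_i],[\rho_i]) = d_i$, then conjugate by the block-diagonal unitary $u = \sum p_i \otimes u_i$. Orthogonality of the $p_i$ gives
$$\bigl\|{\textstyle\sum_i p_i \otimes (\pi_i(a_n) - u_i\rho_i(a_n)u_i^*)}\bigr\|_2^2 = \sum_i t_i \|\pi_i(a_n) - u_i\rho_i(a_n) u_i^*\|_2^2,$$
whence $\dist\bigl(\sum t_i[\pi_i], \sum t_i[\rho_i]\bigr)^2 \leq \sum t_i d_i^2$, which (possibly after passing to an equivalent $\ell^p$-variant metric as allowed by the remarks following Definition \ref{defn:metrics}) delivers the desired linear bound. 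Continuity of each $\gamma_\mu$ in its $X$-arguments is then immediate from this second estimate.

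Completeness of $\Hom(N,\R)$, which also belongs to this proposition, follows independently by lifting a Cauchy sequence $[\pi_k]$ to representatives with $\sum_k \|\pi_k(a) - \pi_{k+1}(a)\|_2 < \infty$ on a countable generating set and taking the pointwise 2-norm limit in the complete algebra $\R$. The main obstacle will be axiom (\ref{algebraic}), where the compatibility of nested standard isomorphisms must be handled cleanly; Proposition \ref{prop:corner} is the essential device, reducing all ambiguity in these choices to inner perturbations on separable subalgebras, which vanish after passing to unitary equivalence classes.
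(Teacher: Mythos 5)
Your proposal follows essentially the same route as the paper's own proof: the tensor-product picture of Example \ref{example:tensor}, the block-diagonal unitary $\sum_i p_i\otimes u_i$ for the second half of axiom (\ref{metric}), composition of standard isomorphisms controlled by Proposition \ref{prop:corner} for axiom (\ref{algebraic}), and the same lift-and-pass-to-limits completeness argument. Axioms (\ref{abelian}), (\ref{linearity}), (\ref{dirac}) and (\ref{algebraic}) are fine as you sketch them.

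The one place where your estimates do not close is axiom (\ref{metric}) -- and it is worth spelling out, because the paper's own computation is loose at exactly the same two points. First, from $\|p_i-\tilde p_i\|_2\le|\mu(i)-\tilde\mu(i)|^{1/2}$ (which is the correct order, since for orthogonal defect projections $\|p_i-\tilde p_i\|_2^2=|\mu(i)-\tilde\mu(i)|$) you get only $\dist(\gamma_\mu(x),\gamma_{\tilde\mu}(x))\le C\sum_i|\mu(i)-\tilde\mu(i)|^{1/2}$, and $\sum_i|\mu(i)-\tilde\mu(i)|^{1/2}$ is \emph{not} dominated by $C'\|\mu-\tilde\mu\|$ (take $n=2$ and $|\mu(1)-\tilde\mu(1)|=\delta\to0$); so "yields the required linear bound" is a non sequitur as written. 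Second, orthogonality of the blocks produces the $\ell^2$-average $\bigl(\sum_i t_i d_i^2\bigr)^{1/2}$, which by Cauchy--Schwarz \emph{dominates} $\sum_i t_i d_i$ rather than being dominated by it, and switching to an $\ell^p$-variant of the metric over the generators $a_n$ does not touch the inner sum over $i$. (The paper asserts $\|p_i-p_i'\|_2=|t_i-t_i'|$ and passes from $(\sum_i t_i(d_i+\e)^2)^{1/2}$ to $\sum_i t_i(d_i+\e)$; these are the same two slips.) What your computation actually establishes is a H\"older-$\frac{1}{2}$ bound in $\mu$ and an $\ell^2$-convexity bound in the $x_i$'s, which is all that the arguments of Section \ref{sec:convex} ever use; if you want the literal linear bound in $\mu$, the cleaner device is to split off the common mass $s_i=\min(\mu(i),\tilde\mu(i))$, write both combinations via axiom (\ref{algebraic}) as $Sy+(1-S)(\cdot)$ with the same $y$ and $1-S=\frac{1}{2}\|\mu-\tilde\mu\|$, and apply the second metric inequality together with boundedness of $X$. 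Either repair the step along these lines or state the weakened form of the axiom you are actually verifying.
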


\begin{proof}  It is clear that $\Hom(N,\R)$ is bounded and we already observed that $\dist$ is a metric, so let's verify completeness.

Let $[\pi_n]$ be a Cauchy sequence. If $X = \{X_1, X_2, \ldots\}$ is a generating set for $N$, then a routine exercise produces unitaries $u_n \in \R$ such that $\{ \Ad u_n\circ \pi_n (X_j) \}$ is 2-norm Cauchy for every $j \in \N$. By completeness of the unit ball of $\R$ in the 2-norm, we can find operators $Y_j \in \R$ such that $\| \Ad u_n\circ \pi_n (X_j) - Y_j\|_2 \to 0$ for all $j \in \N$. It follows that the $*$-moments of $\{Y_1, Y_2, \ldots\}$ agree with those of $\{X_1, X_2, \ldots\}$, hence there is an embedding $\pi\colon N \to W^*(Y_1,Y_2,\ldots)$ such that $\pi(X_j) = Y_j$ and it's easily checked that $\dist([\pi_n], [\pi]) \to 0$.

Regarding the five axioms of Definition \ref{defn:convex}, the first three are easy and will be left to the reader.  (The picture provided by Example \ref{example:tensor} makes the second axiom transparent.)

To verify the first part of axiom \ref{metric}, fix $[\pi_1], \ldots, [\pi_n] \in \Hom(N,\R)$ and two sets of nonnegative numbers $\{t_1,\ldots,t_n\}$ and $\{t_1',\ldots,t_n'\}$ that add up to one.  Choose orthogonal projections $\tilde{p}_i \in \R$ such that $$\tau(\tilde{p}_i) = \min \{t_i, t_i' \}$$ and let $Q = 1 - \sum_{i=1}^n \tilde{p}_i$.  Next, choose projections $q_i \leq Q$ such that $$\tau(q_i) = t_i - \min \{t_i, t_i' \}.$$  Since $1 = \sum t_i = \sum (t_i - \min \{t_i, t_i' \}) + \sum \min \{t_i, t_i' \}$, we have that $\tau(Q) = \sum (t_i - \min \{t_i, t_i' \})$.  Thus we can choose the $q_i$'s to be pairwise orthogonal.   Similarly, we can find pairwise orthogonal projections $q_i' \leq Q$ such that $$\tau(q_i') = t_i' - \min \{t_i, t_i' \}.$$  Now define projections $$p_i := \tilde{p}_i + q_i \textrm{ and } p_i' := \tilde{p}_i + q_i'.$$  Observe that $\tau(p_i) = t_i$ and $\tau(p_i') = t_i'$ for all $i \in \{1,\ldots,n\}$, while the cancellation built into the definitions ensures that $\| p_i - p_i' \|_2 = |t_i - t_i'|$.  Hence, in view of Example \ref{example:tensor}, we get the following estimate:
\begin{align*}
\dist( \sum t_i [\pi_i], \sum t_i' [\pi_i] ) & = \dist( \big[\sum p_i \otimes \pi_i\big],  \big[\sum p_i' \otimes \pi_i\big]) \\
&\leq \bigg( \sum_j \frac{1}{2^{2j}} \| \sum_i p_i \otimes \pi_i (X_j) - p_i' \otimes \pi_i (X_j) \|_2^2 \bigg)^{1/2}\\
&\leq \bigg( \sum_j \frac{1}{2^{2j}} ( \sum_i \| p_i  - p_i'  \|_2 )^2 \bigg)^{1/2}\\
& \leq \sum_i \| p_i  - p_i'  \|_2\\
& = \sum |t_i - t_i'|.
\end{align*}
For the other inequality in axiom \ref{metric}, we keep the numbers $\{t_1,\ldots,t_n\}$ and projections $\{ p_1, \ldots, p_n\}$ as above, but let $[\rho_1], \ldots, [\rho_n] \in \Hom(N, \R)$ and $\e > 0$ be arbitrary. Choose unitaries $u_i \in \R$ such that $$ \bigg( \sum_j \frac{1}{2^{2j}} \| \pi_i (X_j) - u_i \rho_i (X_j)u_i^* \|_2^2 \bigg)^{1/2}  \leq \dist([\pi_i], [\rho_i]) + \e.$$  Define a unitary by $U := \sum_i p_i \otimes u_i$ and we have
\begin{align*}
\dist( \sum t_i [\pi_i], \sum t_i [\rho_i] ) & = \dist( \big[\sum p_i \otimes \pi_i\big],  \big[\sum p_i \otimes \rho_i\big]) \\
&\leq \bigg( \sum_j \frac{1}{2^{2j}} \| \sum_i p_i \otimes \pi_i (X_j) - U(\sum p_i \otimes \rho_i (X_j))U^* \|_2^2 \bigg)^{1/2}\\
&\leq \bigg( \sum_j \frac{1}{2^{2j}} \| \sum_i p_i \otimes (\pi_i (X_j) - u_i \rho_i (X_j) u_i^*) \|_2^2 \bigg)^{1/2}\\
& =  \bigg( \sum_j \frac{1}{2^{2j}}  \sum_i \|p_i\|_2^2 \|\pi_i (X_j) - u_i \rho_i (X_j) u_i^* \|_2^2 \bigg)^{1/2}\\
& \leq \bigg( \sum_i  \|p_i\|_2^2 (\dist([\pi_i], [\rho_i]) + \e)^2 \bigg)^{1/2}\\
&\leq \sum_i t_i (\dist([\pi_i], [\rho_i]) + \e).
\end{align*}

To verify axiom \ref{algebraic} we fix $0 \leq s, t_i, t_j' \leq 1$, where $1 \leq i \leq n$, $1 \leq j \leq m$ and $\sum_i t_i = \sum_j t_j' = 1$, and fix $[\pi_1], \ldots, [\pi_n], [\rho_1],\ldots,[\rho_m] \in \Hom(N,\R)$; we must show $$s \bigg( \sum_i t_i [\pi_i] \bigg) + (1-s) \bigg( \sum_j t_j' [\rho_j] \bigg) = \sum_i st_i [\pi_i] +  \sum_j (1-s)t_j' [\rho_j].$$ So, choose sets of orthogonal projections $\{ p_1, \ldots, p_n\}, \{p_1',\ldots, p_m' \} \subset \R$ such that $\tau(p_i) = t_i$ and $\tau(p_j') = t_j'$, fix standard isomorphisms $\theta_i \colon p_i \R p_i \to \R, \theta_j'\colon p_j' \R p_j' \to \R$, and pick another projection $q \in \R$ of trace $s$ and two standard isomorphisms $\sigma \colon q \R q \to \R, \sigma^{\perp} \colon q^{\perp} \R q^{\perp} \to \R$. The key observation is that $$\theta_i \circ \sigma|_{q \sigma^{-1}(p_i) \R q \sigma^{-1}(p_i)} \colon q \sigma^{-1}(p_i) \R q \sigma^{-1}(p_i) \to \R$$ is a standard isomorphism (and similarly for $\sigma^{\perp}$ and the $\theta_j'$'s), hence we can use $\sigma^{-1}\circ\theta_i^{-1}$ in the definition of $\sum_i st_i [\pi_i] +  \sum_j (1-s)t_j' [\rho_j]$.  Now one computes
\begin{align*}
s \bigg( \sum_i t_i [\pi_i] \bigg) + (1-s) \bigg( \sum_j t_j' [\rho_j] \bigg) &= [ \sigma^{-1}\bigg( \sum_{i=1}^n \theta_i^{-1}\circ\pi_i \bigg) + (\sigma^{\perp})^{-1}\bigg( \sum_{j=1}^m (\theta_i')^{-1}\circ\pi_i' \bigg) ]\\
& = [\sum_{i=1}^n \sigma^{-1}\circ\theta_i^{-1}\circ\pi_i  + \sum_{j=1}^m (\sigma^{\perp})^{-1}\circ(\theta_i')^{-1}\circ\pi_i' ]\\
& = \sum_i st_i [\pi_i] +  \sum_j (1-s)t_j' [\rho_j].
\end{align*}
\end{proof}

%There is a canonical embedding $\R \bar{\otimes} \R \bar{\otimes} \R \subset (R \bar{\otimes} R \bar{\otimes} R)^\omega$, and we use it to define a unital $*$-homomorphism $\sigma\colon N \to (R \bar{\otimes} R \bar{\otimes} R)^\omega$ as follows $$x \mapsto \sum_i Q\otimes p_i \otimes \pi_i(x) + \sum_j Q^{\perp} \otimes p_j' \otimes \rho_j(x).$$  Fixing an isomorphism $\theta \colon R \bar{\otimes} R \to R$, we have two different ways of identifying $(R \bar{\otimes} R \bar{\otimes} R)^\omega$ and $\R$, namely $$(R \bar{\otimes} R \bar{\otimes} R)^\omega \stackrel{\theta \otimes \id}{\to} (R \bar{\otimes} R)^\omega \stackrel{\theta}{\to} \R$$ and $$(R \bar{\otimes} R \bar{\otimes} R)^\omega \stackrel{\id \otimes \theta}{\to} (R \bar{\otimes} R)^\omega \stackrel{\theta}{\to} \R.$$ Let's call the first identification $\Theta_1$ and the second $\Theta_2$.  Since $\Theta_1 \circ \sigma$ and $\Theta_2 \circ \sigma$ differ by a liftable automorphism of $\R$, we have that $[\Theta_1 \circ \sigma] = [\Theta_2 \circ \sigma] \in \Hom(N,\R)$.  Finally, slogging through the definitions and using the built-in projection-picking freedom, one checks that $\Theta_1 \circ \sigma$ is a representative of $$\sum_i st_i [\pi_i] +  \sum_j (1-s)t_j' [\rho_j],$$ while $\Theta_2 \circ \sigma$ is a representative of $$s \bigg( \sum_i t_i [\pi_i] \bigg) + (1-s) \bigg( \sum_j t_j' [\rho_j] \bigg),$$ which completes the proof. \end{proof}

Here is a consequence of our work so far. Below, $| \cdot |$ denotes cardinality, $c$ is the cardinality of the continuum and $\mathrm{dim}(\cdot)$ denotes the topological (i.e., Lebesgue) covering dimension.

\begin{thm}
\label{thm:main}
Let $N \subset \R$ be a separable II$_1$-factor. The following are equivalent:
\begin{enumerate}
\item\label{1} $N \cong R$;

\item\label{2} $|\Hom(N,\R)|  = 1$;

\item\label{3} $|\Hom(N,\R)| < c$;

\item\label{4} $\Hom(N, \R)$ is second countable;

\item\label{5} $\Hom(N, \R)$ is compact;

\item\label{6} $\Hom(N, \R)$ contains an isolated point;

\item\label{7} $\mathrm{dim}(\Hom(N, \R)) < \infty$;

%\item\label{8} For every $\pi\colon N \to \R$, the relative commutant $\pi(N)' \cap \R$ is a factor.
\end{enumerate}
In particular, if $N \subset \R$ but $N \ncong R$, then $\Hom(N,\R)$ is an infinite-dimensional, non-separable, complete metric space with convex-like structure.
\end{thm}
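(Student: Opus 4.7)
The plan is to prove the equivalences by treating (2) as a ``trivial hub'': from a one-point space each of (3)--(7) is automatic, so once I establish (1) $\Leftrightarrow$ (2) it suffices to show that each of (3)--(7) implies either (1) or (2). The equivalence (1) $\Leftrightarrow$ (2) is Jung's theorem, which is already cited in the introduction (it says $\Hom(N,\R)$ is a point iff $N \cong R$). The implications (2) $\Rightarrow$ (3)--(7) are all immediate: a singleton has cardinality one, is second countable, is compact, consists of an isolated point, and has covering dimension zero.

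For the converse direction the main inputs have been prepared in earlier sections: the convex-like structure on $\Hom(N,\R)$ furnished by Proposition \ref{prop:convexity}, the finite-dimensional rigidity of Theorem \ref{thm:infinitedim}, and Ozawa's non-second-countability result (Theorem \ref{thm:nonsepA} in the appendix). First I would handle (3) $\Rightarrow$ (2) and (6) $\Rightarrow$ (2) simultaneously, because each uses the same observation: if $[\pi_0] \neq [\pi_1]$ are two classes in $\Hom(N,\R)$, then by the $n = 1$ step in the proof of Theorem \ref{thm:infinitedim} the map
\[
 [0,1] \to \Hom(N,\R), \qquad t \mapsto t[\pi_0] + (1-t)[\pi_1]
\]
is injective, hence a homeomorphism onto its image. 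This forces $|\Hom(N,\R)| \geq c$ (defeating (3)) and prevents either endpoint from being isolated (defeating (6)). Next, (5) $\Rightarrow$ (4) is the classical fact that every compact metric space is second countable, and (7) $\Rightarrow$ (4) is a direct application of Theorem \ref{thm:infinitedim} to the convex-like metric space $(\Hom(N,\R), \dist)$. Finally, (4) $\Rightarrow$ (1) is the contrapositive of Ozawa's Theorem \ref{thm:nonsepA}. These chains close the loop back to (1) and complete the equivalence.

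The ``in particular'' clause then drops out: if $N \subset \R$ and $N \ncong R$, then (4), (5), (6), (7) all fail, so $\Hom(N,\R)$ is non-second-countable (equivalently, non-separable, since it is metric), non-compact, has no isolated points, and has infinite topological covering dimension; completeness and the convex-like structure itself were already recorded in Proposition \ref{prop:convexity}. The genuinely hard work has been black-boxed elsewhere: Jung's theorem supplies (1) $\Leftrightarrow$ (2), Ozawa's appendix supplies the decisive step (4) $\Rightarrow$ (1), and Theorem \ref{thm:infinitedim} supplies (7) $\Rightarrow$ (4). Given those three inputs, the remainder of the proof is a short bookkeeping exercise whose only nontrivial ingredient is the embedding of $[0,1]$ produced from the convex-like structure.
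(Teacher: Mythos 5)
Your proof is correct and follows essentially the same route as the paper: Jung's theorem for (\ref{1})$\Leftrightarrow$(\ref{2}), Ozawa's appendix for (\ref{4})$\Rightarrow$(\ref{1}), Theorem \ref{thm:infinitedim} together with Proposition \ref{prop:convexity} for (\ref{7})$\Rightarrow$(\ref{4}), and the convex-like structure for the remaining easy implications. The only (harmless, and arguably sharper) deviation is that you derive (\ref{3})$\Rightarrow$(\ref{2}) and (\ref{6})$\Rightarrow$(\ref{2}) from the injectively embedded segment $t \mapsto t[\pi_0] + (1-t)[\pi_1]$, whereas the paper gets (\ref{3}) from Ozawa's non-separability and (\ref{6}) from contractibility; your treatment of (\ref{3}) is in fact tighter, since non-separability of a metric space by itself only yields an uncountable $\e$-separated subset (cardinality $\geq \aleph_1$) rather than cardinality $\geq c$.
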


\begin{proof}  The equivalence of (\ref{1}) and (\ref{2}) is due to Jung (cf., \cite{jung}), while Ozawa's nonseparability result (see Theorem \ref{thm:nonsepA} in the appendix) implies (\ref{1}) is also equivalent to (\ref{3}) and (\ref{4}).  Clearly $(\ref{2}) \Longrightarrow (\ref{5})$, and the fact that $\dist$ is a metric gives the implication $(\ref{5})\Longrightarrow (\ref{4})$.  Proposition \ref{prop:convexity} implies (\ref{2}) and (\ref{6}) are equivalent since $\Hom(N,\R)$ is contractible (to any one of its points). Finally, Theorem \ref{thm:infinitedim} and Proposition \ref{prop:convexity} give the implication $(\ref{7})\Longrightarrow (\ref{4})$ and, since $(\ref{2})\Longrightarrow (\ref{7})$ is trivial, this completes the proof.
\end{proof}

%%%%%%%%%%%%%%%%%%%%%%%%%%%%%%%%%%%%%%%%%
\section{Extreme points of $\Hom(N,\R)$}
\label{sec:extreme}

Having a convex-like structure, it is natural to look for extreme points in $\Hom(N,\R)$.  It turns out that they have an elegant characterization.  But first, a useful observation.

\begin{rem}[Cutting convex combinations]  One can recover the points in a convex combination by  cutting with the right projections.  That is, if $[\pi] = \sum t_i [\pi_i]$, then for each $i$ there is a projection $q_i \in \pi(N)' \cap \R$ such that $[\pi_{q_i}] = [\pi_i]$.  This is immediate if $\pi$ happens to be the representative $$\sum \theta_i^{-1} \circ \pi_i,$$ and follows from Lemma \ref{lem:cut-down} for all other representatives.
\end{rem}

\begin{prop}
\label{prop:extreme} Given $[\pi] \in \Hom(N,\R)$, the following are equivalent:
\begin{enumerate}
\item\label{prop:1} $[\pi]$ is an extreme point (i.e., can't be written as a nontrivial convex combination);

\item\label{prop:2} $\pi(N)' \cap \R$ is a factor;

\item\label{prop:3} $[\pi] = [\pi_p]$ for every nonzero projection $p \in \pi(N)' \cap \R$.
\end{enumerate}
\end{prop}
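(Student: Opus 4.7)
My plan is to treat (3) as a hub and prove $(1)\Leftrightarrow(3)$ and $(2)\Leftrightarrow(3)$ separately. The three workhorses from Section \ref{sec:technical} are Example \ref{example:extreme} (which writes $[\pi] = \tau(p)[\pi_p] + \tau(p^\perp)[\pi_{p^\perp}]$ for any projection $p \in \pi(N)'\cap\R$), Proposition \ref{prop:cut-down} (identifying $[\pi_p] = [\pi_q]$ with Murray--von Neumann equivalence of $p,q$ inside $\pi(N)'\cap\R$), and Proposition \ref{prop:equivcut-down} (which supplies, for each admissible trace $t$, a ``canonical'' projection $p_t \otimes 1$ whose cut-down of $1\otimes\pi$ returns $[\pi]$). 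I will also invoke the cancellation principle: $x = tx + (1-t)y$ with $0 < t < 1$ forces $x = y$, which is immediate from the injectivity of $s \mapsto sx_0 + (1-s)x_1$ established in the $n=1$ step of the proof of Theorem \ref{thm:infinitedim}.

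For $(1)\Leftrightarrow(3)$: assuming (3), if $[\pi] = t[\pi_1] + (1-t)[\pi_2]$ with $0 < t < 1$, the remark preceding the proposition furnishes nonzero projections $q_i \in \pi(N)'\cap\R$ with $[\pi_{q_i}] = [\pi_i]$; hypothesis (3) forces $[\pi_i] = [\pi]$ for both $i$, so $[\pi]$ is extreme. Conversely, suppose a nonzero projection $p \in \pi(N)'\cap\R$ has $[\pi_p] \neq [\pi]$. Faithfulness of the trace on $\R$ gives $\tau(p) > 0$, while $[\pi_1] = [\pi]$ (via any standard isomorphism of $\R$) gives $\tau(p) < 1$. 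Example \ref{example:extreme} then writes $[\pi] = \tau(p)[\pi_p] + \tau(p^\perp)[\pi_{p^\perp}]$; this combination is nontrivial, since $[\pi_p] = [\pi_{p^\perp}]$ would collapse it via axiom \ref{linearity} of Definition \ref{defn:convex} to give $[\pi] = [\pi_p]$, contrary to hypothesis.

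For $(2)\Rightarrow(3)$: fix a nonzero $p \in \pi(N)'\cap\R$ with $t = \tau(p)$, an isomorphism $\theta\colon R\bar{\otimes}R \to R$, and a projection $p_t \in \R$ of trace $t$. Proposition \ref{prop:equivcut-down} gives $[(1\otimes\pi)_{p_t\otimes 1}] = [\pi]$. Picking a unitary $u \in \R$ with $\pi = \Ad u \circ (1\otimes\pi)$ and applying Lemma \ref{lem:cut-down} produces a projection $q := u(p_t\otimes 1)u^* \in \pi(N)'\cap\R$ of trace $t$ with $[\pi_q] = [\pi]$. The trace on $\R$ restricts to a faithful finite trace on the factor $\pi(N)'\cap\R$, so it is of type $\mathrm{II}_1$ or finite-dimensional; in either case, projections of equal trace are Murray--von Neumann equivalent. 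Hence $p\sim q$ in $\pi(N)'\cap\R$, and Proposition \ref{prop:cut-down} yields $[\pi_p] = [\pi_q] = [\pi]$. For $(3)\Rightarrow(2)$ contrapositively, if $\pi(N)'\cap\R$ is not a factor, pick a nontrivial central projection $z$. Then $[\pi_z] \neq [\pi_{z^\perp}]$: otherwise Proposition \ref{prop:cut-down} would yield $z \sim z^\perp$ in $\pi(N)'\cap\R$, but Murray--von Neumann equivalent projections share a central carrier, and here those carriers are the distinct projections $z$ and $z^\perp$ themselves. So Example \ref{example:extreme} presents $[\pi]$ as a genuinely nontrivial convex combination of $[\pi_z]$ and $[\pi_{z^\perp}]$, and cancellation rules out $[\pi] = [\pi_z]$, violating (3).

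The main obstacle is the $(2)\Rightarrow(3)$ direction: abstract factoriality alone does not exhibit any cut-down equal to $[\pi]$, so one must first manufacture at least one projection of each admissible trace in $\pi(N)'\cap\R$ whose cut-down recovers $[\pi]$, after which factoriality (via Proposition \ref{prop:cut-down}) propagates the property to every other such projection. The tensor-product identification $[\pi] = [1\otimes\pi]$ from subsection 3.2, combined with the geometric projections $p_t \otimes 1$ supplied by Proposition \ref{prop:equivcut-down}, provides exactly these canonical witnesses.
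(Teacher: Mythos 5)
Your implications $(3)\Rightarrow(1)$, $(1)\Rightarrow(3)$ and $(2)\Rightarrow(3)$ are correct and run on the same fuel as the paper's argument (Example \ref{example:extreme}, Lemma \ref{lem:cut-down}, Propositions \ref{prop:cut-down} and \ref{prop:equivcut-down}); in particular your $(2)\Rightarrow(3)$ is exactly the paper's $(2)\Rightarrow(1)$ mechanism aimed at a different target, and your derivation of the cancellation principle from the $n=1$ step of Theorem \ref{thm:infinitedim} is valid. The only difference in organization is that you prove four implications with $(3)$ as a hub where the paper closes a three-step cycle; that is harmless.

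The gap is in $(3)\Rightarrow(2)$. Proposition \ref{prop:cut-down} is stated only for projections \emph{of the same trace}, and its proof genuinely needs this hypothesis (Lemma \ref{lem:corner} and Proposition \ref{prop:corner} require equal traces to produce a partial isometry $v$ with $v^*v=p$ and $vv^*=q$). A nontrivial central projection $z$ will in general have $\tau(z)\neq\tau(z^\perp)$, so the step ``otherwise Proposition \ref{prop:cut-down} would yield $z\sim z^\perp$'' is not licensed; you cannot conclude $[\pi_z]\neq[\pi_{z^\perp}]$ this way, and the contradiction with $(3)$ collapses except in the special case $\tau(z)=1/2$. The repair is to trade $z,z^\perp$ for equal-trace witnesses: since $\pi(N)'\cap\R$ is diffuse, choose projections $p\leq z$ and $q\leq z^\perp$ with $\tau(p)=\tau(q)=\min\{\tau(z),\tau(z^\perp)\}$. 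These are not Murray--von Neumann equivalent in $\pi(N)'\cap\R$, because their central supports lie under $z$ and $z^\perp$ respectively and equivalent projections have equal central supports; yet $(3)$ gives $[\pi_p]=[\pi]=[\pi_q]$, whence $p\sim q$ by Proposition \ref{prop:cut-down} --- a contradiction. This is in substance the paper's own $(3)\Rightarrow(2)$, which observes that $(3)$ together with Proposition \ref{prop:cut-down} forces any two projections of equal trace in the diffuse algebra $\pi(N)'\cap\R$ to be equivalent, and that this characterizes factoriality.
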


\begin{proof}
$(\ref{prop:1}) \Longrightarrow (\ref{prop:3})$: In light of Example \ref{example:extreme}, this is trivial.

$(\ref{prop:3}) \Longrightarrow (\ref{prop:2})$: Since $\pi(N)' \cap \R$ is diffuse (in fact, contains a copy of $\R$), it suffices to show that two projections in $\pi(N)' \cap \R$ are Murray-von Neumann equivalent  in $\pi(N)' \cap \R$ if and only if they have the same trace. But since $[\pi_p] = [\pi_q] (= [\pi])$ for all projections $p,q \in \pi(N)' \cap \R$, this follows from Proposition \ref{prop:cut-down}.

$(\ref{prop:2}) \Longrightarrow (\ref{prop:1})$:  Assume $[\pi] = t [\rho] + (1-t) [\sigma]$ and choose a projection $p \in \pi(N)' \cap \R$ such that $[\pi_p] = [\rho]$.  Next, choose a projection $q \in \pi(N)' \cap \R$ such that $\tau(p) = \tau(q)$ and $[\pi_q] = [\pi]$ (cf.\ Lemma \ref{lem:cut-down} and Proposition \ref{prop:equivcut-down}).  Since $\pi(N)' \cap \R$ is a factor, $p$ and $q$ are Murray-von Neumann equivalent and hence Proposition \ref{prop:cut-down} implies that $[\rho] = [\pi]$.  A  similar argument shows $[\pi] = [\sigma]$, so $[\pi]$ is an extreme point.
\end{proof}

Though we won't need it, here's a cute consequence.

\begin{cor} $R$ is the unique separable II$_1$-factor with the property that every embedding into $\R$ has factorial commutant.
\end{cor}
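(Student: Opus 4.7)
The plan is to reduce the statement to Proposition \ref{prop:extreme} together with Theorem \ref{thm:main}. By Proposition \ref{prop:extreme}, the hypothesis ``every embedding $N \to \R$ has factorial relative commutant'' is equivalent to saying that \emph{every} point of $\Hom(N,\R)$ is an extreme point of the convex-like structure. So the corollary reduces to the statement: every $[\pi] \in \Hom(N,\R)$ is extreme if and only if $N \cong R$.

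The ``if'' direction is immediate. If $N \cong R$, Theorem \ref{thm:main} (the equivalence of (\ref{1}) and (\ref{2})) says $\Hom(N,\R)$ consists of a single class; that class is vacuously extreme because forming a nontrivial convex combination would require two distinct points to begin with. Proposition \ref{prop:extreme} then delivers the factorial commutant for the (unique) embedding.

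For the converse I would argue by contrapositive. Suppose $N \not\cong R$. Theorem \ref{thm:main} again furnishes at least two distinct classes $[\pi] \neq [\rho]$ in $\Hom(N,\R)$ (in fact it provides uncountably many, but two is all we need). Using the convex-like structure of Definition \ref{defn:convex-like}, form
\[
[\sigma] := \tfrac{1}{2}[\pi] + \tfrac{1}{2}[\rho].
\]
By construction this exhibits $[\sigma]$ as a nontrivial convex combination of two distinct classes, so $[\sigma]$ is not extreme. Applying Proposition \ref{prop:extreme} to $[\sigma]$ yields an embedding $\sigma \colon N \to \R$ whose relative commutant $\sigma(N)' \cap \R$ fails to be a factor, contradicting the hypothesis on $N$.

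There is no real obstacle here; the corollary is a one-line packaging of the factorial-commutant characterization of extreme points with Jung's theorem (as amplified in Theorem \ref{thm:main}). The only microscopic care is in the degenerate single-point case, where one must note that a lone point of a space with convex-like structure trivially satisfies the definition of extremity.
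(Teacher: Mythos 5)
Your proposal is correct and is essentially the paper's own argument: the paper likewise invokes Jung's theorem to get two distinct classes when $N \ncong R$ and observes that points on the segment joining them are not extreme, hence by Proposition \ref{prop:extreme} have non-factorial commutant. Your treatment of the easy direction (the single point of $\Hom(R,\R)$ is vacuously extreme) is a harmless elaboration of what the paper leaves implicit.
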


\begin{proof} If $N \ncong R$, then by Jung's result \cite{jung} there are at least two distinct elements in $\Hom(N, \R)$.  Every point on the line segment joining distinct points will not be extreme, hence won't have a factorial commutant.
\end{proof}

Another cute consequence occurs at the other end of the amenability spectrum.

\begin{cor}
\label{cor:T}
If $N$ has property (T) (see \cite[Definition 12.1.6]{me-n-taka}), then the extreme points of $\Hom(N, \R)$ form a discrete subset.
\end{cor}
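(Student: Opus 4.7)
The plan is to exhibit, using property (T), a uniform lower bound $\eta_0 > 0$ depending only on $N$ such that any two extreme points $[\pi], [\rho] \in \Hom(N, \R)$ with $\dist([\pi], [\rho]) < \eta_0$ must coincide; discreteness of the extreme point set then follows immediately.

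Property (T) will enter through its Connes-Jones bimodule formulation: there exist a finite set $F \subset N$ and $\delta > 0$ such that every $N$-$N$-bimodule admitting a unit vector $\xi$ with $\|x\xi - \xi x\| < \delta$ for all $x \in F$ carries a central vector within distance $1/2$ of $\xi$. By enlarging $F$ if necessary I arrange $F \subseteq \{a_1, \ldots, a_k\}$, where $(a_n)$ is the generating sequence from Definition \ref{defn:metrics}, and set $\eta_0 = 2^{-k}\delta$. If $\dist([\pi], [\rho]) < \eta_0$, then, after replacing $\rho$ by a unitary conjugate in its class, I may assume $\|\pi(a_i) - \rho(a_i)\|_2 < \delta$ for each $a_i \in F$. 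Viewing $L^2(\R)$ as an $N$-$N$-bimodule with left action $\pi$ and right action $\rho$, the trace vector $\hat{1}$ is then $(F,\delta)$-central, so property (T) yields a nonzero $v \in L^2(\R)$ satisfying $\pi(x) v = v \rho(x)$ for all $x \in N$. Taking adjoints gives $v^*v \in L^1(\R)$ commuting with $\rho(N)$, so the polar decomposition $v = u|v|$ (with $u \in \R$ a partial isometry and $|v|$ affiliated with $\rho(N)' \cap \R$) produces a nonzero $u \in \R$ with $\pi(x) u = u \rho(x)$ for every $x \in N$.

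Setting $p = u^*u \in \rho(N)' \cap \R$ and $q = uu^* \in \pi(N)' \cap \R$, the identity $u^*\pi(x) u = \rho(x) p$ matches the hypothesis of Remark \ref{rem:idontknow} (with $u^*$ playing the role of the intertwining partial isometry), giving $[\pi_q] = [\rho_p]$. Since both $[\pi]$ and $[\rho]$ are extreme, Proposition \ref{prop:extreme}(\ref{prop:3}) forces $[\pi_q] = [\pi]$ and $[\rho_p] = [\rho]$, whence $[\pi] = [\rho]$. The main obstacle is the property-(T) step: producing a nonzero $L^2$-intertwiner with the correct quantitative dependence on $F$ and $\delta$ and then extracting a bounded partial isometry from it via polar decomposition is the heart of the argument, after which the factorial-commutant characterization of extreme points already developed closes the loop.
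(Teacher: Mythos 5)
Your argument is correct and follows the same route as the paper: property (T) converts proximity of $[\pi]$ and $[\rho]$ into a nonzero partial isometry intertwining corners of the two embeddings, and then Remark \ref{rem:idontknow} together with Proposition \ref{prop:extreme} forces $[\pi]=[\rho]$. The only difference is that the paper simply cites Popa's rigidity theorem for the intertwining step, whereas you re-derive it from the bimodule formulation of property (T) via an almost-central vector in $L^2(\R)$ and polar decomposition --- which is precisely the content of the cited result.
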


\begin{proof}  A systematic treatment of rigid (sub)factors can be found in Popa's seminal notes \cite{popa:correspondences}.  Though formulated in our language, Popa established the following fact in \cite[Section 4.5]{popa:correspondences} (or see \cite[Theorem 1.1]{kenley-dima} for a published version): for every $\e > 0$, there is a $\delta > 0$ such that if $[\pi],[\rho] \in \Hom(N,\R)$ and $\dist([\pi], [\rho]) < \delta$, then there are projections $p \in \pi(N)' \cap \R$, $q \in \rho(N)' \cap \R$ and a partial isometry $v \in \R$ such that $v^* v = p, vv^* = q, \tau(p) > 1 - \e$ and $v \pi(x) v^* = q\rho(x)$ for all $x \in N$.  In particular, $[\pi_p] = [\rho_q]$ (see Remark \ref{rem:idontknow}).

So, to see why the extreme points are a discrete subset, let $\e > 0$ be given, choose $\delta$ as above and assume that $[\pi]$ and $[\rho]$ are extreme points with $\dist([\pi], [\rho]) < \delta$.  Then there are nonzero projections $p \in \pi(N)' \cap \R$, $q \in \rho(N)' \cap \R$ such that $[\pi_p] = [\rho_q]$.   By Proposition \ref{prop:extreme}, this implies $[\pi] = [\rho]$.
\end{proof}

We will see in the next section that the extreme points of $\Hom(N,\R)$ are not discrete for many natural examples arising from free products.

%%%%%%%%%%%%%%%%%%%%%%%%%%%%%%%%%%%%%%%%%
\section{The action of $\Out(N)$ on $\Hom(N,\R)$}
\label{sec:action}

Having fleshed out the structure of $\Hom(N,\R)$, we now observe that the outer automorphism group $\Out(N)$ acts by ``affine" homeomorphisms on this space.  We then present lots of examples where the action is nontrivial.\footnote{We thank Sorin Popa for pointing out that similar ideas were used in \cite[Chapter 4]{popa:correspondences}.}

\begin{defn}
\label{defn:action} Define an action of $\Out(N)$ on $\Hom(N,\R)$ as follows: given $\alpha \in \Out(N)$ and $[\pi ] \in \Hom(N,\R)$, define $\alpha.[\pi] := [\pi\circ \alpha^{-1}].$
\end{defn}

It is clear that $\alpha.[\pi]$ is well defined, and easy to check that $[\pi] \mapsto [\pi\circ \alpha^{-1}]$ is a continuous map.  Moreover, with notation as in Definition \ref{defn:convex-like}, $$\alpha.\bigg( \sum t_i [\pi_i] \bigg) = \alpha.\bigg[ \sum \theta_i^{-1}\circ \pi_i \bigg] = \bigg[ \sum \theta_i^{-1}\circ (\pi_i \circ \alpha^{-1}) \bigg] = \sum t_i \alpha.[\pi_i],$$ and hence the convex-like structure is preserved as well.

For an embarrassingly long time, the author could not find an example where $\Out(N)$ acts nontrivially. The question amounts to this: given $\alpha \in \Out(N)$ can one find an embedding $\pi\colon N \to \R$ such that $\alpha$ \emph{does not} extend to an inner automorphism of $\R$ (since $[\pi\circ\alpha] = [\pi] \Longleftrightarrow$ there is a unitary such that $\pi(\alpha(x)) = u\pi(x)u^*$)?

During a fruitful visit to UCLA in June 2010, we put this question to Dima Shlyaktenko. He explained how free entropy calculations suggest that non-extendability ought to be generic for certain automorphisms of free group factors, and proceeded to outline a proof.\footnote{In more detail, if you consider automorphisms of $L(\mathbb{F}_2) \ast L(\mathbb{F}_2)$ of the form $\id \ast \alpha$, then ``most" embeddings of $L(\mathbb{F}_2) \ast L(\mathbb{F}_2)$, at least into matrix ultraproducts, should have the property that $\id \ast \alpha$ can't be extended to an inner automorphism. Roughly, this should be the case because if $u$ implements $\id \ast \alpha$, then it commutes with the left copy of $L(\mathbb{F}_2)$ and microstate spaces of commuting unitaries aren't large enough to account for all possible embeddings of $L(\mathbb{F}_2) \ast L(\mathbb{F}_2)$.}  This free-probabalistic heuristic changed our perspective dramatically, leading to the following simple lemma and the examples which follow.  (Thanks Dima!)

\begin{lem}
\label{lem:neq}
Let $\pi \colon N \to \R$ be an embedding and assume there exists a subalgebra $\Sigma \subset N$ such that $\pi(\Sigma)'\cap \R = \pi(N)' \cap \R$.  Assume $\alpha \in \Aut(N)$ is nontrivial, but restricts to the identity on $\Sigma$.  Then $\alpha.[\pi] \neq [\pi]$.
\end{lem}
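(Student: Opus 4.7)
My plan is to argue by contradiction. Suppose $\alpha.[\pi] = [\pi]$, so by definition there exists a unitary $u \in \R$ implementing a unitary equivalence between $\pi \circ \alpha^{-1}$ and $\pi$, i.e., $\pi(\alpha^{-1}(x)) = u\pi(x)u^*$ for every $x \in N$. Re-indexing by replacing $x$ with $\alpha(y)$, this reads
$$\pi(\alpha(y)) = u^*\pi(y)u \quad \text{for all } y \in N.$$

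Next I would restrict this identity to $\Sigma$. Since $\alpha|_\Sigma = \id_\Sigma$, the displayed equation becomes $\pi(y) = u^*\pi(y)u$ for all $y \in \Sigma$, equivalently $u\pi(y) = \pi(y)u$, so $u \in \pi(\Sigma)' \cap \R$. Invoking the hypothesis $\pi(\Sigma)'\cap \R = \pi(N)'\cap \R$ upgrades this to $u \in \pi(N)'\cap \R$; that is, $u$ commutes with $\pi(x)$ for every $x \in N$.

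Feeding this back into $\pi(\alpha(y)) = u^*\pi(y)u$ gives $\pi(\alpha(y)) = \pi(y)$ for all $y \in N$, and since $\pi$ is injective this forces $\alpha = \id_N$, contradicting the assumption that $\alpha$ is nontrivial. Hence $\alpha.[\pi] \neq [\pi]$.

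I do not expect a genuine obstacle here: the whole argument is bookkeeping with the definition of the $\Out(N)$-action (Definition \ref{defn:action}) and the commutant hypothesis. The only point requiring minor care is the direction of the substitution $x \mapsto \alpha(y)$, to make sure the commutation is extracted on $\Sigma$ rather than on $\alpha(\Sigma)$, but since $\alpha$ fixes $\Sigma$ pointwise the two coincide anyway. The content of the lemma is really the commutant-rigidity hypothesis $\pi(\Sigma)'\cap \R = \pi(N)'\cap \R$; producing $\pi$ satisfying this (which is what will make the lemma useful for free-product examples like $\mathrm{SL}(3,\Z) \ast R$) is the substantive problem, but that is not asked for in the present statement.
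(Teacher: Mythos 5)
Your argument is correct and is essentially identical to the paper's proof: both restrict the implementing unitary relation to $\Sigma$, use $\alpha|_\Sigma=\id$ to place $u$ in $\pi(\Sigma)'\cap\R=\pi(N)'\cap\R$, and then conclude $\pi\circ\alpha=\pi$, contradicting nontriviality of $\alpha$ via injectivity of $\pi$. The only difference is that you track the inverse in the definition of the action more explicitly, which is harmless.
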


\begin{proof} If $u \in \R$ satisfies the equation $\pi(\alpha(x)) = u\pi(x) u^*$ for all $x \in N$, then $u \in \pi(\Sigma)'\cap \R$.  But this forces $u$ to commute with all of $\pi(N)$, contradicting the fact that $\alpha$ is nontrivial.
\end{proof}

Inspired by this lemma, we now construct a large class of examples where $\Hom$ has copious extreme points and automorphisms act nontrivially.  (See Theorem \ref{thm:charaction} for more examples, due to Ozawa.) We begin with finite-dimensional irreducible representations of a property (T) group, just as Voiculescu and Wassermann did in \cite{V3} and  \cite{W}, respectively.

\begin{defn}
\label{defn:sigma} If $\Gamma$ is a discrete group with Kazhdan's property (T) and $\sigma_n \colon \Gamma \to \IM_{k(n)}(\IC)$ are \emph{irreducible} unitary representations, then the direct sum $\oplus \sigma_n \colon \Gamma \to \prod \IM_{k(n)}(\IC)$ descends to a unitary representation of $\Gamma$ into the matrix ultraproduct and we let $$\Sigma(\Gamma) \subset (\IM_{k(n)}(\IC))^\omega$$ be the von Neumann algebra\footnote{Actually $\Sigma(\Gamma)$ is a factor, but we won't need this.} generated by this representation.
\end{defn}

Now we fix $\sigma_n \colon \Gamma \to \IM_{k(n)}(\IC)$ and consider the von Neumann algebra $W^*(\Sigma(\Gamma), \{Y_i \})$ generated by $\Sigma(\Gamma)$ and an arbitrary sequence of contractions $\{Y_i \} \subset (\IM_{k(n)}(\IC))^\omega$.  This is the algebra to which we will apply Lemma \ref{lem:neq}, but a few more preliminaries are needed.

\begin{defn}
\label{defn:iota} There is an obvious inclusion $$(\IM_{k(n)}(\IC))^\omega \cong (\IM_{k(n)}(\IC)\otimes 1)^\omega \subset (\IM_{k(n)}(\IC)\otimes R)^\omega$$ and we let $\iota \colon W^*(\Sigma(\Gamma), \{Y_i \}) \to(\IM_{k(n)}(\IC)\otimes R)^\omega$ denote the restriction of this map to $W^*(\Sigma(\Gamma), \{Y_i \})$.
\end{defn}

We started with irreducible representations so we could control commutants. The next lemma, which shows $\iota \colon W^*(\Sigma(\Gamma), \{Y_i \}) \to(\IM_{k(n)}(\IC)\otimes R)^\omega$ satisfies the hypotheses of Lemma \ref{lem:neq}, is a routine exercise and will be left to the reader.\footnote{Use the fact that a projection commuting with  $\iota(\Sigma(\Gamma))$ can be lifted to a sequence of projections that almost commute with $\sigma_n(\Gamma) \otimes 1$; hence can be perturbed to honestly commuting projections.  Irreducibility of the $\sigma_n$'s forces the lifts into $1 \otimes R$.}

\begin{lem}
\label{lem:commute} We have $$\iota(\Sigma(\Gamma))'\cap (\IM_{k(n)}(\IC)\otimes R)^\omega = \iota(W^*(\Sigma(\Gamma), \{Y_i \}))' \cap (\IM_{k(n)}(\IC)\otimes R)^\omega = (1\otimes R)^\omega.$$ In particular, by Proposition \ref{prop:extreme}, $[\iota]$ is an extreme point.
\end{lem}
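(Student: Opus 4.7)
The plan is to prove the chain
\begin{equation*}
(1\otimes R)^\omega \ \subseteq\ \iota(W^*(\Sigma(\Gamma),\{Y_i\}))'\cap(\IM_{k(n)}(\IC)\otimes R)^\omega \ \subseteq\ \iota(\Sigma(\Gamma))'\cap(\IM_{k(n)}(\IC)\otimes R)^\omega \ \subseteq\ (1\otimes R)^\omega,
\end{equation*}
which then forces all four terms to coincide. The middle inclusion is automatic from $\Sigma(\Gamma)\subseteq W^*(\Sigma(\Gamma),\{Y_i\})$. The first is almost equally easy: by construction $\iota$ lands inside $(\IM_{k(n)}(\IC)\otimes 1)^\omega$, and in each slice $\IM_{k(n)}(\IC)\otimes R$ the subalgebras $\IM_{k(n)}(\IC)\otimes 1$ and $1\otimes R$ obviously commute, so the commutation persists in the ultraproduct.

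So the real content is in the third inclusion. Fix $z=(z_n)\in(\IM_{k(n)}(\IC)\otimes R)^\omega$ commuting with $\iota(\Sigma(\Gamma))$. Let $E_n=\operatorname{tr}_{k(n)}\otimes\id\colon\IM_{k(n)}(\IC)\otimes R\to R$ be the canonical trace-preserving conditional expectation, and set $P_n(z_n):=1\otimes E_n(z_n)\in 1\otimes R$. The sequence $(P_n(z_n))$ represents an element of $(1\otimes R)^\omega$, so it suffices to show $\lim_\omega\|z_n-P_n(z_n)\|_2=0$. The crucial observation is that, under the identification of $L^2(\IM_{k(n)}(\IC)\otimes R,\tau)$ with the Hilbert tensor product $L^2(\IM_{k(n)}(\IC))\otimes L^2(R)$, the map $P_n$ is precisely the orthogonal projection onto the $\Ad(\sigma_n(\cdot)\otimes 1)$-invariant subspace: because each $\sigma_n$ is irreducible, Schur's lemma identifies the $\Ad(\sigma_n)$-invariants in $L^2(\IM_{k(n)}(\IC))$ with $\IC\cdot I$, so the invariants of the tensor action are $\IC\cdot I\otimes L^2(R)=L^2(1\otimes R)$.

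Now invoke property (T): fix a Kazhdan pair $(Q,\varepsilon)$ for $\Gamma$. Applied to $\Ad(\sigma_n(\cdot)\otimes 1)$ restricted to the orthogonal complement of its invariants---a bound that is uniform in $n$ since $(Q,\varepsilon)$ depends only on $\Gamma$---this yields
\begin{equation*}
\varepsilon\,\|z_n-P_n(z_n)\|_2 \ \le\ \max_{g\in Q}\,\bigl\|\,[\sigma_n(g)\otimes 1,\,z_n]\,\bigr\|_2,
\end{equation*}
where the right-hand side arises because $P_n$ is $\Ad$-equivariant and $\sigma_n(g)\otimes 1$ is unitary. Since $z$ commutes with $\iota(\sigma(g))$ for every $g\in Q$, the right-hand side tends to $0$ along $\omega$, forcing $z\in(1\otimes R)^\omega$. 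The only genuinely nontrivial ingredient is the property (T) spectral-gap inequality, and the main point to watch is that the Kazhdan estimate is uniform across the family $\{\sigma_n\}$---which is automatic, since Kazhdan pairs are an invariant of the group itself---and that $P_n$ really is the projection onto invariants, which is exactly where the irreducibility of each $\sigma_n$ gets used.
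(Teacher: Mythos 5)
Your proof is correct and takes essentially the same route the paper sketches: the paper leaves Lemma~\ref{lem:commute} to the reader, but the footnote to Lemma~\ref{lem:inequality} indicates exactly your strategy — apply the quantitative property~(T) estimate to the adjoint representation $\Ad(\sigma_n(\cdot)\otimes 1)$, using irreducibility of each $\sigma_n$ to identify the invariants with $1\otimes R$. Your explicit use of the trace-preserving conditional expectation $E_n$ as the projection onto invariants is simply a clean way to produce the nearby element that the paper's cited ``well-known general fact'' asserts exists.
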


Recall that property (T) groups come with critical sets and Kazhdan constants, so let's fix a critical set $F \subset \Gamma$ and Kazhdan constant $\kappa > 0$.\footnote{This means $F$ is a finite set with the property that for every unitary representation $U \colon \Gamma \to B(\mathcal{H})$ and $\e > 0$, if $v \in \mathcal{H}$ and $\| U_g(v) - v \| \leq \epsilon$ for all $g \in F$, then there exists $v_0 \in \mathcal{H}$ such that $U_s(v_0) = v_0$ for all $s \in \Gamma$ and $\| v - v_0 \| \leq \e / \kappa$.}

\begin{lem}
\label{lem:inequality} For every contraction $X \in \iota(W^*(\Sigma(\Gamma), \{Y_i \}))$ and unitary $u \in (\IM_{k(n)}(\IC)\otimes R)^\omega$, $$\| X - uXu^* \|_2 \leq \frac{2}{\kappa} \max_{g \in F} \| \iota(\Sigma_g) - u\iota(\Sigma_g) u^* \|_2,$$ where $\Sigma_g \in \Sigma(\Gamma)$ denotes the image of a group element $g \in \Gamma$.
\end{lem}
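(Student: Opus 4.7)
My plan is to exploit property (T) through the adjoint representation of $\Gamma$ on $L^2$ of the enveloping ultraproduct. Set $M = (\IM_{k(n)}(\IC)\otimes R)^\omega$ and define the unitary representation $V\colon \Gamma \to \mathcal{U}(L^2(M,\tau))$ by $V_g(\xi) = \iota(\Sigma_g)\xi \iota(\Sigma_g)^*$. Putting $\e := \max_{g\in F}\|\iota(\Sigma_g) - u\iota(\Sigma_g)u^*\|_2$ and pushing factors across $\|\cdot\|_2$ using unitarity of $\iota(\Sigma_g)$ and $u$, one gets
$$\|V_g(u) - u\|_2 = \|\iota(\Sigma_g) u - u\iota(\Sigma_g)\|_2 = \|\iota(\Sigma_g) - u\iota(\Sigma_g)u^*\|_2 \leq \e$$
for every $g \in F$. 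So $u$, viewed as a unit vector of $L^2(M,\tau)$ via $\tau$, is $\e$-almost-invariant under $V|_F$.

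Next I invoke the defining inequality of the Kazhdan constant: there is a genuinely $V$-invariant vector $v_0 \in L^2(M,\tau)$ with $\|u - v_0\|_2 \leq \e/\kappa$ (one may take $v_0$ to be the orthogonal projection of $u$ onto the closed subspace of invariant vectors). Invariance translates to the $L^2$-identity $\iota(\Sigma_g) v_0 = v_0 \iota(\Sigma_g)$ for every $g \in \Gamma$, so $v_0$ lies in the $L^2$-commutant of $\iota(\Sigma(\Gamma))$. Invoking the standard fact that in a finite von Neumann algebra $(M,\tau)$ the $L^2$-commutant of a subalgebra $A \subset M$ coincides with the $\|\cdot\|_2$-closure of $A'\cap M$ inside $L^2(M,\tau)$, together with Lemma \ref{lem:commute}, I conclude $v_0 \in L^2((1\otimes R)^\omega)$.

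The rest is a three-term triangle inequality. Since $X \in \iota(W^*(\Sigma(\Gamma),\{Y_i\})) \subset (\IM_{k(n)}(\IC)\otimes 1)^\omega$ commutes on the nose with every bounded element of $(1\otimes R)^\omega$, approximating $v_0$ in $L^2$ by bounded elements of $(1\otimes R)^\omega$ and passing to the limit gives $Xv_0 = v_0 X$ as $L^2$-vectors; hence, using $\|X\| \leq 1$,
$$\|X - uXu^*\|_2 = \|Xu - uX\|_2 \leq \|X(u-v_0)\|_2 + \|Xv_0 - v_0X\|_2 + \|(v_0 - u)X\|_2 \leq 2\|u-v_0\|_2 \leq \frac{2\e}{\kappa}.$$

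The only step I expect to require care is the identification of $V$-invariant $L^2$-vectors with $L^2$ of the relative commutant $(1\otimes R)^\omega$; it is standard in the theory of finite von Neumann algebras, but worth flagging because everything else is essentially bookkeeping with unitary invariance of $\|\cdot\|_2$ and the Kazhdan inequality. If one prefers to avoid citing that identification, a cleaner route is to replace $v_0$ by the image $E(u)$ of $u$ under the $\tau$-preserving conditional expectation $E\colon M \to (1\otimes R)^\omega$ and verify that the orthogonal projection of $L^2(M)$ onto $L^2((1\otimes R)^\omega)$ agrees with the projection onto the $V$-invariant subspace — which is precisely where the standard fact reappears.
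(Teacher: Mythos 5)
Your argument is correct and is essentially the paper's proof: both pass to the conjugation representation of $\Gamma$ on $L^2$ of the ambient ultraproduct, apply the Kazhdan inequality to produce a nearby $\Gamma$-invariant vector (identified, via the standard fact you flag, with an element of $L^2((1\otimes R)^\omega)$ using Lemma~\ref{lem:commute}), and finish with a triangle inequality exploiting that such elements commute with $X$. The only cosmetic difference is that the paper takes the invariant element to be a \emph{bounded} $u_0 \in (1\otimes R)^\omega$ (i.e.\ the conditional expectation $E(u)$, as you yourself suggest at the end), which allows a two-term rather than three-term decomposition in the final estimate.
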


\begin{proof} If $V\colon \Gamma \to M$ is a unitary representation into a finite von Neumann algebra with faithful trace $\tau$, then $m \mapsto V_g m V_g^*$ extends to a unitary representation of $\Gamma$ on $L^2(M,\tau)$.  Hence, if $\max_{g \in F} \| V_g m V_g^* - m \| \leq \e,$ then we can find $m_0 \in \{ V_s \}_{s \in \Gamma}' \cap M$ such that $\| m - m_0 \|_2 \leq \e / \kappa$.  In the case that $m$ is a unitary, we have $\| 1 - m_0m^* \|_2 \leq \e / \kappa$, too.\footnote{Incidentally, this well-known general fact implies $\Sigma(\Gamma)$ is a factor, as well as Lemma \ref{lem:commute}.}

Applying this general fact to $u \in (\IM_{k(n)}(\IC)\otimes R)^\omega$ and $\tilde{\e} := \max_{g \in F} \| \iota(\Sigma_g) - u\iota(\Sigma_g) u^* \|_2$, we can find $u_0 \in (1\otimes R)^\omega$ (cf.\ Lemma \ref{lem:commute}) such that $\| u - u_0 \|_2 = \| 1 - u_0u^* \|_2 \leq \tilde{\e} / \kappa$. Since $u_0$ commutes with $X$, we have $$\| X - uXu^* \|_2 = \| X(1 - u_0u^*) + (u_0 - u)Xu^*\|_2 \leq \|1 - u_0u^* \|_2 + \| u_0 - u \|_2 \leq \frac{2}{\kappa} \tilde{\e},$$ as claimed.
\end{proof}

%\begin{defn}
%\label{defn:delta}  Given $\alpha_1, \alpha_2 \in \Aut( W^*(\Sigma(\Gamma), \{Y_i \}) )$, let $$\delta(\alpha_1, \alpha_2) := \bigg( \sum_i \frac{1}{2^i} \| \alpha_1(Y_i) - \alpha_2(Y_i) \|_2^2 \bigg)^{1/2},$$ where the 2-norm is with respect to the restriction of the ultraproduct trace.
%\end{defn}

So far we haven't worried much about the metric $\dist$, but there is a particularly convenient one in the present context.  Indeed, $W^*(\Sigma(\Gamma), \{Y_i \})$ has a nice generating set, namely a critical set $F$ for $\Gamma$ together with $\{Y_i \}$.  Though the following formula is a little different from what we're used to, it is easily seen to be equivalent to a standard metric coming from $F \cup \{Y_i \}$.

\begin{defn}
\label{defn:delta} Define a metric $\dist^*$ on $\Hom( W^*(\Sigma(\Gamma), \{Y_i \}), \R)$ by $$\dist^* ([\pi], [\rho] ) := \inf_{u \in \mathcal{U}(\R)} \bigg(\frac{2}{\kappa} \max_{g \in F} \| \pi(\Sigma_g) - u\rho(\Sigma_g) u^* \|_2  + \bigg(\sum_{i = 1}^\infty \frac{1}{2^i} \|\pi(Y_i) - u \rho(Y_i) u^* \|_2^2 \bigg)^{1/2}\bigg).$$
\end{defn}

This funny formula is computable in some cases (which is why we're using it).

\begin{prop}
\label{prop:inequality}
Let $\alpha_1, \alpha_2 \in \Aut(W^*(\Sigma(\Gamma), \{Y_i \}))$ be automorphisms that restrict to the identity on $\Sigma(\Gamma)$. Then $$\dist^* (\alpha_1 .[\iota], \alpha_2 .[\iota] ) = \bigg( \sum_i \frac{1}{2^i} \| \alpha_1(Y_i) - \alpha_2(Y_i) \|_2^2 \bigg)^{1/2}.$$  In particular, if $\alpha_1 \neq \id$, then $\alpha_1 .[\iota] \neq [\iota]$.
\end{prop}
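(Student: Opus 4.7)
The plan is to sandwich $\dist^*(\alpha_1.[\iota], \alpha_2.[\iota])$ between matching upper and lower bounds using the Kazhdan-constant package of Lemma~\ref{lem:inequality}. A representative of $\alpha_i.[\iota]$ is $\iota\circ\alpha_i^{-1}$; since $\alpha_i$ fixes $\Sigma(\Gamma)$ pointwise, so does $\alpha_i^{-1}$, hence $(\iota\circ\alpha_i^{-1})(\Sigma_g) = \iota(\Sigma_g)$ for every $g \in \Gamma$, while $(\iota\circ\alpha_i^{-1})(Y_j) = \iota(\alpha_i^{-1}(Y_j))$. (The formula in the statement and the one involving $\alpha_i^{-1}$ are interchangeable, since the hypothesis on the $\alpha_i$ is preserved under inversion.)

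For the upper bound, I would simply plug $u = 1$ into the infimum defining $\dist^*$: the $\max_{g \in F}$ term vanishes outright, and the remaining weighted $\ell^2$ sum is exactly the claimed value.

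For the lower bound, fix an arbitrary unitary $u \in \mathcal{U}(\R)$ and set $\varepsilon := \frac{2}{\kappa}\max_{g \in F}\|\iota(\Sigma_g) - u\iota(\Sigma_g)u^*\|_2$. Each $X_j := \iota(\alpha_2^{-1}(Y_j))$ is a contraction in $\iota(W^*(\Sigma(\Gamma), \{Y_i\}))$, so Lemma~\ref{lem:inequality} gives $\|X_j - uX_ju^*\|_2 \leq \varepsilon$. The triangle inequality for the $2$-norm then yields
\begin{equation*}
\|\iota\alpha_1^{-1}(Y_j) - \iota\alpha_2^{-1}(Y_j)\|_2 \;\leq\; \|\iota\alpha_1^{-1}(Y_j) - u\iota\alpha_2^{-1}(Y_j)u^*\|_2 + \varepsilon.
\end{equation*}
Squaring, weighting by $2^{-j}$, summing, and applying Minkowski's inequality --- together with the key normalization $\sum_j 2^{-j} = 1$, which makes the constant $\varepsilon$ pass through with weighted $\ell^2$-norm exactly $\varepsilon$ --- produces
\begin{equation*}
\Bigl(\sum\tfrac{1}{2^j}\|\alpha_1^{-1}(Y_j) - \alpha_2^{-1}(Y_j)\|_2^2\Bigr)^{1/2} \;\leq\; \Bigl(\sum\tfrac{1}{2^j}\|\iota\alpha_1^{-1}(Y_j) - u\iota\alpha_2^{-1}(Y_j)u^*\|_2^2\Bigr)^{1/2} + \varepsilon,
\end{equation*}
and taking the infimum over $u$ completes the lower bound.

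The ``in particular'' assertion is then immediate: if $\alpha_1 \neq \id$, then since $\Sigma(\Gamma) \cup \{Y_i\}$ generates the ambient von Neumann algebra and $\alpha_1$ fixes $\Sigma(\Gamma)$, $\alpha_1$ must move some $Y_j$, so the equality applied with $\alpha_2 = \id$ gives strictly positive distance. The only real content is Lemma~\ref{lem:inequality}; the Minkowski-with-$\varepsilon$-shift step is routine, but it is genuinely critical that the weights in Definition~\ref{defn:delta} form a probability measure --- this is precisely what makes the perturbation cost $\varepsilon$ paid on the $\Sigma$-side match the perturbation cost $\varepsilon$ absorbed by Minkowski on the $\ell^2$-side, giving the claimed \emph{equality} rather than just an inequality.
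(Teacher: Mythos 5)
Your proof is correct and follows essentially the same route as the paper: $u=1$ gives the upper bound, and for the lower bound you apply Lemma~\ref{lem:inequality} to $\iota(\alpha_2^{-1}(Y_j))$, use the triangle inequality, and then Minkowski together with $\sum_j 2^{-j}=1$, before taking the infimum over $u$. The only difference is that you are careful to track $\alpha_i^{-1}$ coming from the definition $\alpha.[\pi]=[\pi\circ\alpha^{-1}]$, correctly noting that this is immaterial since the hypothesis on the $\alpha_i$ is inversion-stable; the paper silently writes $\alpha_i$ in place of $\alpha_i^{-1}$ but the content is identical.
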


\begin{proof} Since each $\alpha_i$ is the identity on $\Sigma(\Gamma)$, the inequality $\leq$ is trivial (let $u = 1$). For the other inequality, we fix a unitary $u \in (\IM_{k(n)}(\IC)\otimes R)^\omega$, define $\tilde{\e} :=\frac{2}{\kappa} \max_{g \in F} \| \iota(\Sigma_g) - u\iota(\Sigma_g) u^* \|_2$ and invoke Lemma \ref{lem:inequality} followed by Minkowski's inequality to observe that
\begin{align*}
\bigg( \sum_i \frac{1}{2^i} &\| \alpha_1(Y_i) - \alpha_2(Y_i) \|_2^2 \bigg)^{1/2}\\
& = \bigg( \sum_i (\frac{1}{2^{i/2}} \| \iota(\alpha_1(Y_i)) - u\iota(\alpha_2(Y_i))u^* +  u\iota(\alpha_2(Y_i))u^* - \iota(\alpha_2(Y_i)) \|_2)^2 \bigg)^{1/2}\\
&\leq \bigg( \sum_i (\frac{1}{2^{i/2}} \| \iota(\alpha_1(Y_i)) - u\iota(\alpha_2(Y_i))u^*\|_2 +  \frac{1}{2^{i/2}}   \tilde{\e})^2  \bigg)^{1/2}\\
&\leq  \bigg( \sum_i \frac{1}{2^{i}} \| \iota(\alpha_1(Y_i)) - u\iota(\alpha_2(Y_i))u^*\|_2^2\bigg)^{1/2} + \bigg( \sum_i \frac{1}{2^{i}} \tilde{\e}^2  \bigg)^{1/2}\\
&= \bigg( \sum_i \frac{1}{2^{i}} \| \iota(\alpha_1(Y_i)) - u\iota(\alpha_2(Y_i))u^*\|_2^2\bigg)^{1/2} + \frac{2}{\kappa}\max_{g \in F} \| \iota(\Sigma_g) - u\iota(\Sigma_g) u^* \|_2.
\end{align*}
Since $\dist^* (\alpha_1 .[\iota], \alpha_2 .[\iota] )$ is the infimum of the right hand side, the proof is complete.
\end{proof}

Of course, we are now left to wonder whether $W^*(\Sigma(\Gamma), \{Y_i \})$ has any nontrivial automorphisms that restrict to the identity on $\Sigma(\Gamma)$.  In general this is unclear because we have no idea what $W^*(\Sigma(\Gamma), \{Y_i \})$ looks like.  But fundamental work of Popa on freeness in ultraproducts implies that we can often identify $W^*(\Sigma(\Gamma), \{Y_i \})$ with a free product. More precisely, for every separable von Neumann algebra $M \subset \R$, there is a sequence of contractions $\{Y_i \} \subset (\IM_{k(n)}(\IC))^\omega$ such that $W^*(\{Y_i\}) \cong M$ and an isomorphism $W^*(\Sigma(\Gamma), \{Y_i \}) \cong \Sigma(\Gamma) \ast M$ (the tracial free product) that restricts to the identity on $\Sigma(\Gamma)$.

This fact is known to experts, so we only sketch the argument. First, with a judicious choice of microstates, one can find a trace-preserving embedding $M \subset (\IM_{k(n)}(\IC))^\omega$.\footnote{Though elementary, this is quite a technical exercise.  One first lifts the given embedding $M \subset \R$ to find microstates inside matrices $\IM_{l(m)}(\IC)$ that converge to (generators of) $M$ (in moments) as $m \to \infty$.  Then some careful bookkeeping, taking (not necessarily unital) direct sums of $\IM_{l(m)}(\IC)$ inside $\IM_{k(n)}(\IC)$ when $k(n) >> l(m)$, allows one to construct microstates inside  $\IM_{k(n)}(\IC)$ that converge to $M$ as $n \to \infty$.  And these microstates yield a trace-preserving embedding $M \subset (\IM_{k(n)}(\IC))^\omega$.}  Since both $\Sigma(\Gamma)$ and $M$ are separable, so is the von Neumann algebra they generate, and thus \cite{popa:asterisque} (see also \cite{V}) ensures the existence of a Haar unitary $u \in (\IM_{k(n)}(\IC))^\omega$ which is free from $W^*(\Sigma(\Gamma), M)$.  Hence (uniqueness of GNS representations implies) the von Neumann algebra generated by $\Sigma(\Gamma)$ and $uM u^*$ is isomorphic to $\Sigma(\Gamma) \ast M$.

Thus we have a large class of examples to which Proposition \ref{prop:inequality} applies.  Indeed, $\Aut(M)$ acts on $\Sigma(\Gamma) \ast M$ via free-product automorphisms $\id \ast \alpha$ and hence we've proved the following theorem.

\begin{thm}
\label{thm:fp} For every separable von Neumann algebra $M  \subset \R$,  $\Hom(\Sigma(\Gamma) \ast M, \R)$ has an extreme point $[\iota]$ (Lemma \ref{lem:commute}) with the property that its stabilizer under the action of $\Aut(M)$ (via $\alpha \mapsto \id \ast \alpha$) is trivial.

In fact, by Proposition \ref{prop:inequality}, if $M$ is generated by contractions $\{ Y_i \}$ which are used to define the metric $\dist^*$ (Definition \ref{defn:delta}), then for all $\alpha_1, \alpha_2 \in \Aut(M)$ we have $$\dist^* ((\id\ast \alpha_1) .[\iota], (\id \ast \alpha_2) .[\iota] ) = \bigg( \sum_i \frac{1}{2^i} \| \alpha_1(Y_i) - \alpha_2(Y_i) \|_2^2 \bigg)^{1/2}.$$
\end{thm}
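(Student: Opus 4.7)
The plan is a straight assembly of pieces already in hand. The crux of the theorem is the concrete realization of $\Sigma(\Gamma) \ast M$ as $W^*(\Sigma(\Gamma), \{Y_i\})$ inside $(\IM_{k(n)}(\IC))^\omega$, after which the other claims fall out of the preceding lemmas.

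First I would produce the generating set $\{Y_i\}$. Fix a sequence of contractions $\{X_i\} \subset M$ whose $*$-algebra is weakly dense in $M$. Following the sketch preceding the theorem, use microstates (available since $M \subset \R$) to obtain a trace-preserving embedding $M \hookrightarrow (\IM_{k(n)}(\IC))^\omega$; the padding/bookkeeping to pass from microstates in $\IM_{l(m)}(\IC)$ to microstates in $\IM_{k(n)}(\IC)$ is the tedious part. Then apply Voiculescu's asymptotic freeness theorem \cite[Theorem 2.2]{V} (again with a zero-padding argument to reconcile $\IM_n$ and $\IM_{k(n)}$) to produce a Haar unitary $u \in (\IM_{k(n)}(\IC))^\omega$ that is free from $W^*(\Sigma(\Gamma), M)$. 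Setting $Y_i := u X_i u^*$, freeness and the uniqueness of GNS representations yield an isomorphism $W^*(\Sigma(\Gamma), \{Y_i\}) \cong \Sigma(\Gamma) \ast M$ that restricts to the identity on $\Sigma(\Gamma)$.

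Second, with $\iota \colon W^*(\Sigma(\Gamma), \{Y_i\}) \to (\IM_{k(n)}(\IC) \otimes R)^\omega \cong \R$ as in Definition \ref{defn:iota}, Lemma \ref{lem:commute} gives $\iota(\Sigma(\Gamma) \ast M)' \cap \R = (1 \otimes R)^\omega$, a factor. Proposition \ref{prop:extreme} then identifies $[\iota]$ as an extreme point. Third, for any nontrivial $\alpha \in \Aut(M)$, the free-product automorphism $\id \ast \alpha$ is nontrivial but fixes $\Sigma(\Gamma)$ pointwise; combined with the equality $\iota(\Sigma(\Gamma))' \cap \R = \iota(\Sigma(\Gamma) \ast M)' \cap \R$ from Lemma \ref{lem:commute}, the hypothesis of Lemma \ref{lem:neq} is met (with $N = \Sigma(\Gamma) \ast M$ and $\Sigma = \Sigma(\Gamma)$), so $(\id \ast \alpha).[\iota] \neq [\iota]$. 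Finally, the distance formula is just Proposition \ref{prop:inequality} applied to the automorphisms $\id \ast \alpha_1$ and $\id \ast \alpha_2$, each of which restricts to the identity on $\Sigma(\Gamma)$.

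The only substantive obstacle is the first step, i.e., making Voiculescu's asymptotic freeness yield the free-product identification with $\{Y_i\}$ lying inside the matrix ultraproduct rather than some ambient $R^\omega$. The paper already sketches this reduction in a footnote, and no genuinely new ingredient is required beyond well-known random-matrix arguments; once it is in place, everything else reduces to invoking Lemmas \ref{lem:commute}, \ref{lem:neq} and Propositions \ref{prop:extreme}, \ref{prop:inequality} verbatim.
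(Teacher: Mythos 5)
Your proposal is correct and follows essentially the same route as the paper: realize $\Sigma(\Gamma)\ast M$ as $W^*(\Sigma(\Gamma),\{Y_i\})$ via microstates and Voiculescu's asymptotic freeness, then invoke Lemma \ref{lem:commute} with Proposition \ref{prop:extreme} for extremality and Proposition \ref{prop:inequality} for the metric formula. The only cosmetic difference is that you cite Lemma \ref{lem:neq} for triviality of the stabilizer, whereas the paper reads it off from the distance formula of Proposition \ref{prop:inequality}; both are valid and rest on the same commutant computation.
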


Contrast the next result with Corollary \ref{cor:T}.

\begin{cor}
\label{cor:nondiscrete} If $M  \subset \R$ has a nontrivial trace-preserving sequence $\alpha_n \in \Aut(M)$ such that $\alpha_n \to \id_M$ in the point 2-norm topology (e.g., if $M$ is not abelian and atomic), then the extreme points of $\Hom(\Sigma(\Gamma) \ast M, \R)$ are not discrete.
\end{cor}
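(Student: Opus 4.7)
The plan is to exhibit a sequence of extreme points, distinct from but converging to the distinguished extreme point $[\iota]$ of Theorem \ref{thm:fp}. Because the $\Out(\Sigma(\Gamma)\ast M)$-action on $\Hom(\Sigma(\Gamma)\ast M,\R)$ is by affine homeomorphisms (Section \ref{sec:action}), it maps extreme points to extreme points; in particular each $[\iota_n] := (\id \ast \alpha_n).[\iota]$ is extreme. The trivial-stabilizer statement of Theorem \ref{thm:fp}, combined with the assumption that each $\alpha_n$ is nontrivial, then guarantees $[\iota_n] \neq [\iota]$ for every $n$.

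It remains to show $\dist^*([\iota_n], [\iota]) \to 0$. Applying the identity from Theorem \ref{thm:fp} with $\alpha_1 = \alpha_n$ and $\alpha_2 = \id_M$ gives
$$\dist^*([\iota_n], [\iota]) = \bigg(\sum_{i=1}^\infty \frac{1}{2^i} \|\alpha_n(Y_i) - Y_i\|_2^2\bigg)^{1/2},$$
where $\{Y_i\}$ is the contractive generating set for $M$ used in Definition \ref{defn:delta}. Because each $\alpha_n$ is trace-preserving, $\|\alpha_n(Y_i) - Y_i\|_2 \leq 2\|Y_i\|_2 \leq 2$, so the $i$-th summand is dominated by $4/2^i$. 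Since $\|\alpha_n(Y_i) - Y_i\|_2 \to 0$ for every $i$ (this is exactly convergence in the point 2-norm), dominated convergence yields $\dist^*([\iota_n], [\iota]) \to 0$. This exhibits extreme points arbitrarily close to, but distinct from, $[\iota]$, so the extreme points of $\Hom(\Sigma(\Gamma)\ast M, \R)$ are not discrete.

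I do not anticipate any serious obstacle: the entire argument is a direct application of Theorem \ref{thm:fp}'s distance formula and trivial-stabilizer property, capped off by a one-line dominated convergence estimate. The only point requiring a brief sanity check is that each $\id \ast \alpha_n$ really is an automorphism of $\Sigma(\Gamma)\ast M$ restricting to the identity on $\Sigma(\Gamma)$ (so that Proposition \ref{prop:inequality} genuinely applies), but this is immediate from the free-product functoriality invoked in the lead-up to Theorem \ref{thm:fp}. The parenthetical example in the statement — that $M$ being neither abelian nor atomic suffices — would follow by noting that in the diffuse or noncommutative case one can always produce a nontrivial trace-preserving sequence of automorphisms tending to the identity (e.g., via inner conjugation by unitaries close to $1$ in $\|\cdot\|_2$, when $M$ is noncommutative).
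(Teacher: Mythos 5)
Your proof is correct and follows exactly the paper's argument: take the extreme point $[\iota]$ from Theorem \ref{thm:fp}, observe that the affine action sends it to extreme points $(\id\ast\alpha_n).[\iota]$ distinct from $[\iota]$ (trivial stabilizer, $\alpha_n\neq\id$), and use the distance formula with $\alpha_2=\id_M$ to see $\dist^*((\id\ast\alpha_n).[\iota],[\iota])\to 0$. The dominated convergence step you spell out is the only detail the paper leaves implicit.
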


\begin{proof} Let $[\iota]$ be as in Theorem \ref{thm:fp}.  Then $(\id\ast \alpha_n) .[\iota]$ are also extreme points. Since $\alpha_n \neq \id$, $(\id\ast \alpha_n) .[\iota] \neq [\iota]$, but $\dist^* ((\id\ast \alpha_n) .[\iota], [\iota] ) \to 0$.
\end{proof}

If we had started with $\Gamma = SL(n,\Z)$ for some odd integer $n \geq 3$, then a striking theorem (cf.\ \cite[Theorem 1]{bekka}) of Bekka implies that $\Sigma(\Gamma) = L(\Gamma)$. (Many thanks to Sorin Popa for bringing Bekka's paper to our attention, and suggesting its relevance to this work.)  Hence, specializing to this case we have:

\begin{cor} Let $\Gamma = SL(n,\Z)$ for some odd integer $n \geq 3$ and $M \subset \R$ be any separable subalgebra.  Then $\Aut(M)$ acts on $\Hom(L(\Gamma)\ast M, \R)$ (via $\alpha \mapsto \id \ast \alpha$) and there is an extreme point $x \in \Hom(L(\Gamma)\ast M, \R)$ with trivial stabilizer (for the $\Aut(M)$ action). If $M$ is not abelian and atomic, then the extreme points of $\Hom(L(\Gamma)\ast M, \R)$ are not discrete.
\end{cor}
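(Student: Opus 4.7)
The plan is to deduce this corollary as a direct specialization of Theorem \ref{thm:fp} and Corollary \ref{cor:nondiscrete}, once we invoke Bekka's theorem to identify $\Sigma(\mathrm{SL}(n,\Z))$ with $L(\mathrm{SL}(n,\Z))$ for odd $n \geq 3$.

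First I would recall that $\Gamma = \mathrm{SL}(n,\Z)$ has Kazhdan's property (T) for every $n \geq 3$ (Kazhdan's original theorem), so it is a legitimate input for Definition \ref{defn:sigma}. The key external ingredient is Bekka's theorem \cite[Theorem 1]{bekka}: for $n \geq 3$ odd, a suitable choice of irreducible finite-dimensional unitary representations $\sigma_m \colon \Gamma \to \IM_{k(m)}(\IC)$ yields $\Sigma(\Gamma) \cong L(\Gamma)$ inside the matrix ultraproduct. With this choice fixed, the Voiculescu-type asymptotic freeness argument sketched just before Theorem \ref{thm:fp} identifies the subalgebra of $(\IM_{k(m)}(\IC)\otimes R)^\omega$ generated by $\Sigma(\Gamma)$ and a Haar-free copy of $M$ with the tracial free product $L(\Gamma) \ast M$, which is precisely the input needed to run Theorem \ref{thm:fp} with this $\Gamma$ and the given $M$.

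The first assertion is then immediate: the distinguished embedding $[\iota]$ furnished by Lemma \ref{lem:commute} and Theorem \ref{thm:fp} is an extreme point of $\Hom(L(\Gamma)\ast M, \R)$, and the distance formula in Theorem \ref{thm:fp} forces its stabilizer under $\alpha \mapsto \id \ast \alpha$ to be trivial -- for if $(\id \ast \alpha).[\iota] = [\iota]$, then $\sum_i 2^{-i} \| \alpha(Y_i) - Y_i \|_2^2 = 0$, which makes $\alpha$ the identity on a generating set of $M$, hence on all of $M$. For the second assertion, when $M$ is not abelian and atomic one can produce a sequence $u_j \in \mathcal{U}(M)$ with $u_j \to 1$ in $\|\cdot\|_2$ but $u_j \neq 1$; the trace-preserving automorphisms $\alpha_j := \Ad\, u_j$ are then nontrivial and converge to $\id_M$ in the point-$\|\cdot\|_2$ topology. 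Corollary \ref{cor:nondiscrete} applies verbatim to conclude that the extreme points of $\Hom(L(\Gamma) \ast M, \R) = \Hom(\Sigma(\Gamma) \ast M, \R)$ are not discrete.

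The only real obstacle is the proper citation of Bekka's theorem, which provides the concrete identification of the abstract algebra $\Sigma(\Gamma)$ with the group factor $L(\Gamma)$; once this identification is in hand, both conclusions are mechanical specializations of the preceding two results and require no further work.
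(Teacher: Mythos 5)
Your proposal is correct and follows exactly the paper's route: the corollary is obtained by specializing Theorem \ref{thm:fp} and Corollary \ref{cor:nondiscrete} to $\Gamma = \mathrm{SL}(n,\Z)$, using Bekka's theorem to identify $\Sigma(\Gamma)$ with $L(\Gamma)$. The only (very minor) caveat is that your construction of nontrivial automorphisms tending to $\id_M$ via $\Ad u_j$ only covers the non-abelian case, whereas the hypothesis ``not abelian and atomic'' also admits abelian diffuse $M$, for which one should instead use nontrivial measure-preserving transformations near the identity.
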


Specializing even further to the case $M = R$, we get ``proper" embeddings of arbitrary discrete groups.  That is, if $\Lambda$ is a countable discrete group, we can write $$R \cong \bigotimes_{s \in \Lambda} \IM_{2}(\IC)$$ and let $\Lambda$ act by Bernoulli shifts.  If $Y \in \IM_{2}(\IC)$ is a partial isometry with orthogonal range and support projections, then $Y$ generates $\IM_{2}(\IC)$ and so we can take $\{ Y_s \}_{s \in \Lambda}$ as our set of generators for $R$ and Theorem \ref{thm:fp} specializes to:

\begin{cor}
\label{cor:proper}
Let $\Gamma = SL(n,\Z)$ for an odd integer $n \geq 3$ and $\Lambda$ be any countable discrete group acting on $R$ by Bernoulli shifts.  Then taking free products with the identity map on $L(\Gamma)$, $\Lambda$ acts on $\Hom( L(\Gamma) \ast R, \R)$ and there is an extreme point $x_0 \in \Hom( L(\Gamma) \ast R, \R)$ such that $$\dist^*(s.x_0, t.x_0 ) = 2( \| Y \|_2^2 - |\tau(Y)|^2) = 1$$ for all distinct group elements $s, t \in \Lambda$.
\end{cor}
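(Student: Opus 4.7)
The plan is to invoke Theorem \ref{thm:fp} directly with $M = R = \bigotimes_{s \in \Lambda} \IM_2(\IC)$, viewed as generated by the family $\{Y_s\}_{s \in \Lambda}$ used in the definition of $\dist^*$, and to carry out the one genuine calculation --- the distance between two $\Lambda$-translates of $[\iota]$. Concretely, I would fix an enumeration $s_1, s_2, \ldots$ of $\Lambda$, set $Y_i := Y_{s_i}$, and use this sequence as the contractions from Definitions \ref{defn:delta} and \ref{defn:sigma} (with $\Gamma = SL(n,\Z)$). Taking $x_0 := [\iota] \in \Hom(L(\Gamma)\ast R,\R)$ as produced by Lemma \ref{lem:commute}, the right-hand side of the distance formula in Theorem \ref{thm:fp} with $\alpha_1 = \beta_s, \alpha_2 = \beta_t$ (Bernoulli shift automorphisms) becomes
\[
\dist^*(s.x_0, t.x_0)^2 \;=\; \sum_{i=1}^\infty \frac{1}{2^i}\,\|\beta_s(Y_{s_i}) - \beta_t(Y_{s_i})\|_2^2 \;=\; \sum_{i=1}^\infty \frac{1}{2^i}\,\|Y_{s s_i} - Y_{t s_i}\|_2^2,
\]
where we use that the Bernoulli shift sends the copy $Y_r$ (which is $Y$ in the $r$th tensor factor) to $Y_{s r}$.

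For distinct $s \neq t$ in $\Lambda$ and any $r \in \Lambda$, the elements $Y_{sr}$ and $Y_{tr}$ live in different, hence tensor-independent, copies of $\IM_2(\IC)$. Thus $\tau(Y_{sr}^* Y_{tr}) = \tau(Y^*)\tau(Y) = |\tau(Y)|^2$, and expanding the square gives the constant value
\[
\|Y_{sr} - Y_{tr}\|_2^2 = 2\|Y\|_2^2 - 2|\tau(Y)|^2,
\]
independent of $r$. Since $\sum_i 2^{-i} = 1$, summing the series yields $\dist^*(s.x_0, t.x_0)^2 = 2(\|Y\|_2^2 - |\tau(Y)|^2)$, which is the first equality to be proved.

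The remaining step is the trivial computation for the chosen $Y \in \IM_2(\IC)$: since $Y$ is a partial isometry with orthogonal range and support projections, it is (up to unitary conjugation) the matrix unit $e_{12}$, so $\tau(Y) = 0$ and $\|Y\|_2^2 = \tau(Y^*Y) = 1/2$, giving $2(\|Y\|_2^2 - |\tau(Y)|^2) = 1$. Finally, that $x_0$ is an extreme point is just the content of Lemma \ref{lem:commute}, and the fact that $s.x_0 \neq t.x_0$ for $s \neq t$ is an immediate consequence of the distance being nonzero (which in turn reflects the triviality of the stabilizer asserted in Theorem \ref{thm:fp}). There is really no serious obstacle here --- the corollary is essentially a transcription of Theorem \ref{thm:fp} with the Bernoulli-shift action and the explicit generator $Y$ plugged in; the only non-cosmetic step is recognizing that tensor-independence of $Y_a$ and $Y_b$ (for $a \neq b$) collapses the cross-terms to $|\tau(Y)|^2$.
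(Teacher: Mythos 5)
Your proposal is correct and follows exactly the paper's own route: the corollary is obtained by specializing Theorem \ref{thm:fp} to $M = R = \bigotimes_{s\in\Lambda}\IM_2(\IC)$ with generators $Y_s$ (copies of $e_{12}$), and the distance computation collapses by tensor independence of $Y_{sr}$ and $Y_{tr}$ for $s\neq t$. Your one small improvement is noticing that Theorem \ref{thm:fp} actually gives $\dist^*(s.x_0,t.x_0)=\bigl(2(\|Y\|_2^2-|\tau(Y)|^2)\bigr)^{1/2}$, so the displayed equality in the corollary is literally a statement about the \emph{square}; since the value is $1$, the square root is invisible, and you correctly note this by computing $\dist^{*2}$ before taking the root.
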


One could replace $R$ with $L(\mathbb{F}_\infty)$, where every $\Lambda$ acts  by free Bernoulli shifts, and get a similar result.

%%%%%%%%%%%%%%%%%%%%%%%%%%%%%%%%%%%%%%%%%
\section{Functorial issues and concluding remarks}
\label{sec:func}

%%%%%%%%%%%%%%%%%%%%%%
\subsection{Rescalings}

There is a natural notion of isomorphism for the dynamical systems we've been considering.  Namely,  $(\Hom(N, \R), \Out(N))$ \emph{is isomorphic to} $(\Hom(M, \R), \Out(M))$ if there is an ``affine" homeomorphism $\Theta \colon \Hom(N, \R) \to \Hom(M, \R)$ and a group isomorphism $T \colon \Out(N) \to \Out(M)$ such that $\Theta( \alpha . x ) = T(\alpha). \Theta(x)$ for all $x \in \Hom(N, \R)$ and $\alpha \in \Out(N)$.

It turns out that the dynamical systems associated to $N$ and $pNp$ are isomorphic, for all projections $p \in N$, as we now prove.

\begin{subdefn}
\label{defn:comp}
Let $p \in N$ be a projection. Define $\Theta^p \colon \Hom(N, \R) \to \Hom(pNp, \R)$ by $\Theta^p([\pi]) = [\theta_{\pi(p)}\circ \pi|_{pNp}]$, where $\theta_{\pi(p)} \colon \pi(p) \R \pi(p) \to \R$ is a standard isomorphism.  Also, given $\alpha \in \Out(N)$, let $\alpha^p \in \Out(pNp)$ be the canonically associated outer automorphism.\footnote{Recall that if $\alpha \in \Aut(N)$ and $v_{\alpha} \in N$ is a partial isometry such that $v_{\alpha}^* v_{\alpha} = \alpha(p)$ and $v_{\alpha} v_{\alpha}^* = p$, then $\alpha^p (x) := v_{\alpha} \alpha(p) v_{\alpha}^*$ defines an automorphism of $pNp$, and this procedure $\alpha \mapsto \alpha^p$ descends to an isomorphism $\Out(N) \cong \Out(pNp)$ that is independent of all choices.}
\end{subdefn}

Our first lemma follows from the definition of standard isomorphism, hence will be left to the reader.

\begin{sublem}
\label{lem:suck} Given projections $s \leq t \in \R$ and standard isomorphisms $\theta_s \colon s\R s \to \R, \theta_t \colon t\R t \to \R$, the isomorphism $$\theta_s \circ \theta_t^{-1}|_{\theta_t(s) \R \theta_t(s)} \colon \theta_t(s) \R \theta_t(s) \to \R$$ is also standard.

Thus, $\theta_t \circ \theta_s^{-1} \colon \R \to \theta_t(s) \R \theta_t(s)$ is the inverse of a standard isomorphism.
\end{sublem}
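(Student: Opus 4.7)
The plan is to construct, from the data defining $\theta_s$ and $\theta_t$, a compatible lift of $\theta_t(s)$ together with matching isomorphisms so that the standard isomorphism produced by these choices is literally $\theta_s \circ \theta_t^{-1}|_{\theta_t(s)\R\theta_t(s)}$. Fix data $(t_n, \beta_n)$ defining $\theta_t$ and $(s_n, \alpha_n)$ defining $\theta_s$, where $\beta_n \colon t_n R t_n \to R$ and $\alpha_n \colon s_n R s_n \to R$ are isomorphisms.

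First, pass to a new lift $(\tilde s_n)$ of $s$ compatible with $(t_n)$, namely with $\tilde s_n \leq t_n$ and $\tau_R(\tilde s_n) = \tau_{\R}(s)$; this is possible via the canonical identification $t\R t \cong (t_n R t_n)^\omega$, simply by lifting $s$ as an element of that ultraproduct of finite factors. Set $q_n := \beta_n(\tilde s_n) \in R$. Because $\beta_n$ is an isomorphism of $\mathrm{II}_1$-factors, it scales the normalized trace by $\tau_R(t_n)^{-1}$, so $\tau_R(q_n) = \tau_R(\tilde s_n)/\tau_R(t_n) = \tau_{\R}(s)/\tau_{\R}(t) = \tau_{\R}(\theta_t(s))$; since $(q_n)$ visibly represents $\theta_t(s)$ in $\R$, it is an admissible lift.

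Next, rewrite $\theta_s$ using the new lift $(\tilde s_n)$. Since $s_n$ and $\tilde s_n$ have equal trace in $R$, choose partial isometries $v_n \in R$ with $v_n^* v_n = \tilde s_n$ and $v_n v_n^* = s_n$; the sequence $(v_n)$ drops to $v \in \R$ with $v^* v = v v^* = s$, so $v$ sits in $s\R s$ as a ``unitary'' of that corner. A direct ultraproduct calculation shows that the standard isomorphism produced by $(\tilde s_n, \alpha_n \circ \Ad(v_n))$ equals $\Ad(\theta_s(v)) \circ \theta_s$. To kill this stray inner automorphism, choose unitaries $w_n \in R$ representing $\theta_s(v^*) \in \R$ and set $\tilde\alpha_n := \Ad(w_n) \circ \alpha_n \circ \Ad(v_n) \colon \tilde s_n R \tilde s_n \to R$; by construction, $(\tilde s_n, \tilde\alpha_n)$ now produces exactly $\theta_s$.

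Finally, define $\gamma_n := \tilde\alpha_n \circ \beta_n^{-1} \colon q_n R q_n \to R$. For $x \in \theta_t(s)\R\theta_t(s)$ with lift $(x_n) \in \prod q_n R q_n$, the element $\beta_n^{-1}(x_n)$ lies in $\tilde s_n R \tilde s_n$ (using $\tilde s_n \leq t_n$) and represents $\theta_t^{-1}(x) \in s\R s$, so $\gamma_n(x_n) = \tilde\alpha_n(\beta_n^{-1}(x_n))$ represents $\theta_s(\theta_t^{-1}(x)) \in \R$. This identifies $\theta_s \circ \theta_t^{-1}|_{\theta_t(s)\R\theta_t(s)}$ with the standard isomorphism produced by $(q_n, \gamma_n)$, and inverting yields the second assertion. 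The main obstacle is that the originally given lifts $(s_n)$ and $(t_n)$ need not satisfy $s_n \leq t_n$; the detour through $(\tilde s_n)$ handles this, but then one must verify that $\theta_s$ can still be recovered from the new lift after an inner-automorphism correction in the target $\R$, which is exactly what the unitaries $w_n$ are designed to absorb.
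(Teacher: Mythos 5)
Your argument is correct. The paper simply asserts that this lemma ``follows from the definition of standard isomorphism'' and leaves the verification to the reader, and your proof supplies exactly the intended details: replace the given lift of $s$ by one dominated by the lift of $t$, absorb the resulting discrepancy into an inner correction (the unitaries $w_n$ lifting $\theta_s(v)^*$), and read off that $(q_n,\gamma_n)$ is admissible data producing $\theta_s\circ\theta_t^{-1}|_{\theta_t(s)\R\theta_t(s)}$.
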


\begin{sublem}
\label{lem:well} $\Theta^p \colon \Hom(N, \R) \to \Hom(pNp, \R)$ is well-defined, ``affine", continuous and covariant for the actions of $\Out(N)$ and $\Out(pNp)$.
\end{sublem}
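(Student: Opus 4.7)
The plan is to verify the four assertions in turn, reducing each to Proposition \ref{prop:corner} via the defining liftability of standard isomorphisms. For well-definedness, if $\theta, \theta'$ are two standard isomorphisms $\pi(p)\R\pi(p) \to \R$, then $\theta\circ(\theta')^{-1}$ is a liftable automorphism of $\R$, so Proposition \ref{prop:corner} yields $[\theta\circ\pi|_{pNp}] = [\theta'\circ\pi|_{pNp}]$. Independence of representative is similar: if $\rho = \Ad u \circ \pi$, then $\theta_{\pi(p)}\circ\Ad u^*$ is a liftable isomorphism $\rho(p)\R\rho(p) \to \R$, and composing with $\rho|_{pNp}$ returns $\theta_{\pi(p)}\circ\pi|_{pNp}$; another appeal to Proposition \ref{prop:corner} shows this class agrees with the one defined using any official $\theta_{\rho(p)}$.

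For ``affineness'', write $[\pi] = \sum t_i[\pi_i]$ with the representative $\pi = \sum\theta_i^{-1}\circ\pi_i$ from Definition \ref{defn:convex-like}, where $\theta_i \colon p_i\R p_i \to \R$ is standard and the $p_i$ are pairwise orthogonal of trace $t_i$. Then $\pi(p) = \sum\theta_i^{-1}(\pi_i(p))$ decomposes as orthogonal subprojections of the $p_i$'s, each of trace $t_i\tau(p)$. Invoking Lemma \ref{lem:suck}, we may choose $\theta_{\pi(p)}$ blockwise, so that on $\theta_i^{-1}(\pi_i(p))\R\theta_i^{-1}(\pi_i(p))$ it factors as a standard isomorphism onto a corner $q_i\R q_i$ (with $\{q_i\}$ pairwise orthogonal of trace $t_i$) followed by $\theta_{\pi_i(p)}^{-1}$. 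The resulting representative of $\Theta^p([\pi])$ is then a convex combination of representatives of the $\Theta^p([\pi_i])$'s.

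For continuity, given $[\pi_n] \to [\pi]$ we choose representatives with $\|\pi_n(x) - \pi(x)\|_2 \to 0$ for every $x$ in a countable generating set of $N$; in particular $\pi_n(p) \to \pi(p)$ in 2-norm, so there are unitaries $w_n \in \R$ with $w_n\pi_n(p)w_n^* = \pi(p)$ and $\|w_n - 1\|_2 \to 0$. Replacing $\pi_n$ by $\Ad w_n\circ\pi_n$ places every cut-down in the common corner $\pi(p)\R\pi(p)$, and a single fixed standard isomorphism then transports the convergence down to $\Hom(pNp,\R)$.

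For covariance, let $v \in N$ be a partial isometry with $v^*v = \alpha(p)$ and $vv^* = p$, so that $(\alpha^p)^{-1}(y) = \alpha^{-1}(v^*yv)$. The two representatives to compare, $\theta_{\pi(\alpha^{-1}(p))}\circ\pi\circ\alpha^{-1}|_{pNp}$ and $\theta_{\pi(p)}\circ\Ad\pi(\alpha^{-1}(v^*))\circ\pi\circ\alpha^{-1}|_{pNp}$, both factor through $\pi\circ\alpha^{-1}|_{pNp}\colon pNp \to \pi(\alpha^{-1}(p))\R\pi(\alpha^{-1}(p))$, and the two postcomposed isomorphisms into $\R$ differ by a liftable automorphism of that corner; Proposition \ref{prop:corner} then provides the required unitary equivalence. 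The main obstacle throughout is the affineness step, whose success hinges on the compatible blockwise construction of the standard isomorphism guaranteed by Lemma \ref{lem:suck}; the remaining verifications are essentially bookkeeping around Proposition \ref{prop:corner}.
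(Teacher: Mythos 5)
Your proposal is correct and follows essentially the same route as the paper: Proposition \ref{prop:corner} (via liftability of standard isomorphisms) for well-definedness and covariance, a blockwise decomposition of $\theta_{\pi(p)}$ via Lemma \ref{lem:suck} for affineness, and a routine choice of representatives for continuity. The only blemish is in the affineness step, where the stated factorization ``a standard isomorphism onto a corner $q_i\R q_i$ followed by $\theta_{\pi_i(p)}^{-1}$'' has the maps in an order that does not compose; the intended (and correct) decomposition is the paper's $\theta_{\pi(p)}|_{\text{$i$-th block}} = (\theta_q\circ\theta_{q_i}^{-1})\circ(\theta_{q_i}\circ\theta_i^{-1})\circ\theta_i$, with both factors standard (or inverses of standard maps) by Lemma \ref{lem:suck}.
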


\begin{proof}  Proving $\Theta^p$ is well defined is similar to arguments we've seen already (cf.\ Remark \ref{rem:idontknow}), hence will be left to the reader. Since the topology on $\Hom( \cdot, \R)$ is essentially point-2-norm convergence modulo unitary conjugation, it is routine to verify that $\Theta^p$ is continuous.

Checking covariance is only slightly harder.  Given $[\pi] \in \Hom(N, \R)$, a standard isomorphism $\theta_{\pi(p)} \colon {\pi(p)} \R {\pi(p)} \to \R$, $\alpha \in \Aut(N)$ and a unitary $u_{\alpha}$ such that $u_{\alpha} \alpha(p) u_{\alpha}^* = p$, the unital embedding $pNp \to \R$ given by $$x \mapsto \theta_{\pi(p)} \circ \pi |_{pNp} (u_{\alpha} \alpha(x) u_{\alpha}^*)$$ is a representative of $\alpha^p . \Theta^p([\pi])$.  A representative of $\Theta^p( \alpha . [\pi])$ is given by $$x \mapsto \theta_{\pi(\alpha(p))} \circ \pi \circ \alpha (x),$$ where $\theta_{\pi(\alpha(p))} \colon \pi(\alpha(p)) \R \pi(\alpha(p)) \to \R$ is any standard isomorphism.  For example, we could take $$\theta_{\pi(\alpha(p))} (y) := \theta_{\pi(p)} ( \pi(u_\alpha) y \pi( u^*_\alpha )),$$ which is easily seen to be a standard isomorphism, and then it is clear that the covariance condition $\Theta^p( \alpha . [\pi]) = \alpha^p . \Theta^p([\pi])$ is satisfied.

To see that $\Theta^p$ preserves the convex-like structure we fix $[\pi_1], \ldots, [\pi_k] \in \Hom(N, \R)$ and $t_1, \ldots, t_k \in [0,1]$ such that $\sum t_i = 1$.  Then fix some orthogonal projections $p_1, \ldots, p_k \in \R$ such that $\tau(p_i ) = t_i$ and standard isomorphisms $\theta_i \colon p_i \R p_i \to \R$.  Next, define $q_i = \theta_i^{-1} \circ \pi_i(p)$, $q = \sum_1^k q_i$ and fix standard isomorphisms $\theta_{q_i} \colon q_i \R q_i \to \R$ and $\theta_q \colon q\R q \to \R$.  Note that $\tau(q) =\tau(p)$ and $\tau(q_i) = t_i \tau(p) = t_i \tau(q)$; hence $\tau(\theta_q(q_i)) = t_i$.

Letting $s = q_i$ and $t = p_i$ in Lemma \ref{lem:suck}, we see that the isomorphism $$\theta_{q_i} \circ \theta_i^{-1}|_{\pi_i(p) \R \pi_i(p)} \colon  \pi_i(p) \R \pi_i(p) \to \R$$ is standard.  Similarly, but taking $s = q_i$ and $t = q$ this time, the isomorphism $\theta_q \circ \theta_{q_i}^{-1}\colon \R \to \theta_q(q_i) \R \theta_q(q_i)$ is the inverse of a standard isomorphism $\theta_q(q_i) \R \theta_q(q_i) \to \R$.  Since $\tau(\theta_q(q_i)) = t_i$, we can use the projections $\theta_q(q_i)$ and the standard isomorphisms $\theta_{q_i} \circ \theta_i^{-1}$ in the construction of $\sum t_i\Theta^p([\pi_i])$.  Also, we're at liberty to use the standard isomorphisms $\theta_{q_i} \circ \theta_i^{-1}|_{\pi_i(p) \R \pi_i(p)}$ in the construction of $\Theta^p [\pi_i]$.  Hence we have $$\sum t_i\Theta^p([\pi_i]) = \sum t_i [ ( \theta_{q_i} \circ \theta_i^{-1}) \circ \pi_i|_{pNp} ] = \bigg[  \sum (\theta_q \circ \theta_{q_i}^{-1} ) \circ ( \theta_{q_i} \circ \theta_i^{-1}) \circ \pi_i|_{pNp} \bigg].$$

On the other hand, we can use the $p_i$'s, $\theta_i$'s and $\theta_q$ in the construction of $\Theta^p \big( \sum t_i [\pi_i] \big)$. Thus we have
\begin{align*}
\Theta^p \big( \sum t_i [\pi_i] \big) & = \Theta^p \bigg( [ \sum \theta_i^{-1} \circ \pi_i ] \bigg)\\
& = \bigg[ \theta_q \circ \big( \sum \theta_i^{-1} \circ \pi_i\big)|_{pNp} \bigg]\\
&= \bigg[  \sum \theta_q \circ \theta_i^{-1} \circ \pi_i|_{pNp} \bigg]\\
&= \bigg[  \sum (\theta_q \circ \theta_{q_i}^{-1} ) \circ ( \theta_{q_i} \circ \theta_i^{-1}) \circ \pi_i|_{pNp} \bigg].\\
&= \sum t_i\Theta^p([\pi_i]),
\end{align*}
showing $\Theta^p$ is ``affine".
\end{proof}

The covariance of the previous lemma means that $\Theta^p$ induces a map at the level of dynamical systems, which we will continue to denote by $\Theta^p$.

\begin{subthm}
\label{thm:scale}
For every II$_1$-factor $N$ and nonzero projection $p \in N$, $$\Theta^p \colon  (\Hom(N,\R), \Out(N)) \to (\Hom(pNp,\R), \Out(pNp))$$ is an (affine) isomorphism.
\end{subthm}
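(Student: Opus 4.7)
The plan is to prove that $\Theta^p$ is a bijection by constructing an explicit inverse $\Theta_p \colon \Hom(pNp,\R) \to \Hom(N,\R)$. Once bijectivity is in hand, Lemma \ref{lem:well} already establishes that $\Theta^p$ is affine, continuous, and covariant for the canonical isomorphism $\Out(N) \cong \Out(pNp)$, so the remaining properties of $\Theta_p$ (continuity, affineness, covariance) either are automatic or will fall out of the construction.

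The construction is Morita-theoretic. Let $t = \tau_N(p)$ and $k = \lceil 1/t \rceil$. Choose partial isometries $v_1,\dots,v_k \in N$ with $v_1 = p$, $v_i^* v_i = p$ for $i < k$, $p_k := v_k^* v_k \leq p$, and with the range projections $v_i v_i^*$ mutually orthogonal summing to $1_N$; put $p_i := p$ for $i < k$. Every $a \in N$ then decomposes as $a = \sum_{i,j} v_i (v_i^* a v_j) v_j^*$ with $v_i^* a v_j \in p_i(pNp)p_j$, so $N$ is recovered from $pNp$ together with these matrix-unit-like data. Given $[\sigma] \in \Hom(pNp,\R)$ with representative $\sigma$, choose a projection $Q_1 \in \R$ with $\tau(Q_1) = t$ and a standard isomorphism $\theta_{Q_1} \colon Q_1 \R Q_1 \to \R$; choose mutually orthogonal projections $Q_2,\dots,Q_k \in \R$ with $\tau(Q_i) = \tau_N(v_i v_i^*)$ and $\sum Q_i = 1$; and choose partial isometries $W_1 = Q_1, W_2, \dots, W_k \in \R$ with $W_i^* W_i = \theta_{Q_1}^{-1}(\sigma(p_i))$ and $W_i W_i^* = Q_i$ (which exist because $\R$ is a II$_1$-factor). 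Define $\pi \colon N \to \R$ by
$$\pi(v_i x v_j^*) := W_i\, \theta_{Q_1}^{-1}(\sigma(p_i x p_j))\, W_j^* \qquad (x \in pNp)$$
and set $\Theta_p([\sigma]) := [\pi]$. The matrix-unit relations among the $W_i$'s ensure $\pi$ is a well-defined unital $*$-homomorphism; by construction $\pi(p) = Q_1$ and $\theta_{Q_1} \circ \pi|_{pNp} = \sigma$, so $\Theta^p([\pi]) = [\sigma]$, giving the right-inverse identity.

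The two remaining tasks are: (a) $\Theta_p$ descends to a well-defined map on equivalence classes (independent of the choice of $\{v_i\}$, $\{Q_i\}$, $\{W_i\}$, $\theta_{Q_1}$, and representative $\sigma$); and (b) $\Theta_p \circ \Theta^p = \mathrm{id}$. Both reduce to a single uniqueness assertion: if $\pi,\tilde\pi \colon N \to \R$ are two embeddings whose compressions to $pNp$ (via standard isomorphisms on $\pi(p)\R\pi(p)$ and $\tilde\pi(p)\R\tilde\pi(p)$) are unitarily equivalent in $\Hom(pNp,\R)$, then $[\pi] = [\tilde\pi]$ in $\Hom(N,\R)$. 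I expect this to be the main obstacle. To prove it, one lifts the conjugating partial isometry and the Morita data to sequences in $\ell^\infty(\N,R)$, exploits fiberwise the uniqueness (up to unitary conjugation) of matrix-unit embeddings into $R$ to produce compatible amplification data on each fiber, and then invokes Proposition \ref{prop:corner} to assemble a unitary in $\R$ that conjugates $\pi$ to $\tilde\pi$. This follows the same template as the uniqueness arguments of Section \ref{sec:technical}, with the new ingredient being the presence of a finite family of partial isometries $\{v_i\} \leftrightarrow \{W_i\}$ rather than the single corner isomorphisms handled there. Once (a) and (b) are established, continuity of $\Theta_p$ and its compatibility with the convex-like structure and the $\Out$-action are automatic from bijectivity together with the corresponding properties of $\Theta^p$ recorded in Lemma \ref{lem:well}.
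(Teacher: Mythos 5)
Your proposal is correct in outline but takes a genuinely different route from the paper. The paper first settles the case $\tau(p)=1/k$ by factoring $N \cong \IM_k(\IC)\otimes pNp$ and writing down the explicit inverse $[\pi^p]\mapsto[\id_k\otimes\pi^p]$, and then reaches general $p$ purely formally: choosing $p\le q$ with $\tau(p)=1/k$, it exploits the composition relation $\Theta^p=\Theta^p_q\circ\Theta^q$ to deduce first injectivity of $\Theta^q$ (from bijectivity of $\Theta^p$) and then surjectivity of $\Theta^q$ (by iterating the sandwich with a still smaller projection $s\le p$). You instead construct a two-sided inverse directly for arbitrary $p$ by carrying along the full Morita data $\{v_i\}$ and $\{W_i\}$, including the ``short'' last partial isometry $v_k$ to absorb the non-integer defect; this avoids the bootstrap but forces you to confront well-definedness head-on. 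What your approach buys is a single uniform formula and a transparent picture of how $\Hom(pNp,\R)$ sits inside $\Hom(N,\R)$; what the paper's approach buys is that the only hard work is done in the trivial amplification case. One simplification you may want: the uniqueness assertion you label the ``main obstacle'' (injectivity of $\Theta^p$) does not actually need any fiberwise lifting or fiberwise matrix-unit uniqueness. If $\Theta^p([\pi])=\Theta^p([\tilde\pi])$, then Proposition \ref{prop:corner} applied to the liftable isomorphism $\theta_{\tilde\pi(p)}^{-1}\circ\theta_{\pi(p)}$ (composed with the given unitary) produces a partial isometry $v\in\R$ with $v^*v=\pi(p)$, $vv^*=\tilde\pi(p)$, and $v\pi(x)v^*=\tilde\pi(x)$ for all $x\in pNp$; the explicit unitary $U:=\sum_i\tilde\pi(v_i)\,v\,\pi(v_i^*)$ then conjugates $\pi$ to $\tilde\pi$ on all of $N$ by the decomposition $a=\sum_{i,j}v_i(v_i^*av_j)v_j^*$. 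This proves injectivity in one line once $v$ is in hand, and it also discharges the well-definedness of $\Theta_p$ for free, since any two outputs of your construction with the same compression must be conjugate by injectivity of $\Theta^p$.
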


\begin{proof} The only thing left to verify is that $\Theta^p$ is a homeomorphism, and this is easiest to do when $\tau(p) = 1/k$ for some $k \in \N$.  Indeed, in this case $N \cong   \IM_k(\mathbb{C}) \otimes pNp$ and an inverse for $\Theta^p$ is easy to describe.  Namely, if $\pi^p \colon pNp \to \R$ is given, then $\id_k \otimes \pi^p \colon \IM_k(\mathbb{C}) \otimes pNp \to (\IM_k(\mathbb{C}) \otimes R)^\omega$ defines an element of $\Hom(N, \R)$ and one checks that $\Theta^p( [ \id_k \otimes \pi^p ] ) = [\pi^p]$.  Since the assignment $\pi^p \mapsto \id_k \otimes \pi^p$ clearly induces a continuous map at the level of $\Hom$, this shows that $\Theta^p$ is a homeomorphism whenever $\tau(p) = 1/k$.

For the general case, first observe that if $p \leq q \in N$ then we can consider three compression maps $$\Theta^q \colon \Hom(N , \R) \to \Hom(qNq, \R), \ \Theta^p \colon \Hom(N , \R) \to \Hom(pNp, \R)$$ and $$\Theta^p_q \colon \Hom(qNq , \R) \to \Hom(pNp, \R).$$ These maps satisfy the relation $\Theta^p = \Theta^p_q \circ \Theta^q.$  Hence, if $q \in N$ is arbitrary and we take $p \leq q$ of trace $1/k$, for sufficiently large $k \in \N$, then it follows from the previous paragraph that $\Theta^q$ is injective. To see that $\Theta^q$ is surjective, it suffices to show that $\Theta^p_q$ is injective, since the relation above implies $\Theta^p_q$ is surjective.  But $\Theta^p_q$ must be injective because we can pick a projection $s \leq p$ such that $\tau(s) = \tau(q)/j$, for sufficiently large $j \in \N$, and repeat the argument above with $1, q$ and $p$ replaced by $q, p$ and $s$.
\end{proof}

For every $0 < t < \infty$, let $N^t$ denote the amplification of $N$ by $t$ (i.e., the corner of $N \bar{\otimes} B(\mathcal{H})$ determined by a projection of trace $t$).

\begin{subcor}
\label{cor:free}
For every $0 < t < \infty$, the dynamical systems $(\Hom(N,\R), \Out(N))$ and $(\Hom(N^t,\R), \Out(N^t))$ are (affinely) isomorphic.  In particular, for every $s, t \in (1, \infty)$, there is a canonical isomorphism $$(\Hom(L(\mathbb{F}_s),\R), \Out(L(\mathbb{F}_s))) \cong (\Hom(L(\mathbb{F}_t),\R), \Out(L(\mathbb{F}_t))),$$ where $L(\mathbb{F}_s)$ is the (interpolated) free group factor (cf.\ \cite{Ken}, \cite{Florin}).
\end{subcor}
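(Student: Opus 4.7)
The plan is to deduce this corollary from Theorem \ref{thm:scale} by bootstrapping the compression case $t \leq 1$ to all positive $t$, using an auxiliary matrix amplification as a bridge.

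First I would handle the case $0 < t \leq 1$. Here $N^t$ is by definition (isomorphic to) $pNp$ for any projection $p \in N$ of trace $t$, so Theorem \ref{thm:scale} directly produces an affine isomorphism
$$(\Hom(N,\R), \Out(N)) \cong (\Hom(N^t,\R), \Out(N^t)).$$

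For $t > 1$, the strategy is to pick an integer $k \geq t$ and use the II$_1$-factor $M := N^k \cong \IM_k(\IC) \otimes N$ as an intermediate object. Inside $M$ there is a rank-one matrix-unit projection $e$ with $\tau_M(e) = 1/k$ such that $eMe \cong N$, and there is also a projection $q \in M$ with $\tau_M(q) = t/k \leq 1$ such that $qMq \cong N^t$. Applying Theorem \ref{thm:scale} to $M$ with the projections $e$ and $q$ yields affine isomorphisms
$$(\Hom(M,\R), \Out(M)) \cong (\Hom(N,\R), \Out(N)) \quad \text{and} \quad (\Hom(M,\R), \Out(M)) \cong (\Hom(N^t,\R), \Out(N^t)),$$
and composing one with the inverse of the other gives the desired isomorphism for $N^t$. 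Since each $\Theta^p$ is affine, continuous, and covariant for the outer automorphism groups (Lemma \ref{lem:well}), the composition preserves all of the relevant structure.

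For the second assertion, I would invoke the Voiculescu–Radulescu–Dykema rescaling formula $L(\mathbb{F}_s)^t = L(\mathbb{F}_{1 + (s-1)/t^2})$ for $s > 1$ and $t > 0$, which shows that any two interpolated free group factors $L(\mathbb{F}_s)$ and $L(\mathbb{F}_r)$ (with $s, r > 1$, hence in particular for all integer free group factors) are amplifications of one another. The canonical isomorphism between their dynamical systems is then an instance of the first part of the corollary.

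There is no real obstacle here beyond organizing the amplifications consistently; the only thing to keep straight is that when we identify $eMe$ with $N$ and $qMq$ with $N^t$, the induced isomorphisms on $\Out$ are the canonical ones from Definition \ref{defn:comp}, so that the outer automorphism identifications chain together correctly under composition.
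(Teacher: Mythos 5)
Your proposal is correct and is essentially the intended deduction: the paper gives no separate argument for this corollary precisely because, as you observe, both $N$ and $N^t$ sit as corners of the II$_1$-factor $\IM_k(\IC)\otimes N = N^k$ for $k \geq t$, so two applications of Theorem \ref{thm:scale} (together with the canonical identifications of the outer automorphism groups) compose to give the affine isomorphism, and the Dykema--Radulescu rescaling formula handles the free group factor case. No gaps.
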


%%%%%%%%%%%%%%%%
\subsection{Opposite algebras}

There is a canonical isomorphism $$(\Hom(N,\R), \Out(N) ) \cong (\Hom(N^{op}, (\R)^{op}), \Out(N^{op})).$$ Since $R\cong R^{op}$, it follows that $\R \cong (\R)^{op}$ and  hence $\Hom(N, \R) \cong \Hom(N^{op}, \R)$. Thus our dynamical systems can't distinguish between an algebra and its opposite algebra.  However, it could be interesting to replace $\R$ with $M^{\omega}$, where $M$ isn't anti-isomorphic to itself, and see if the resulting dynamical systems associated to $N$ and $N^{op}$ are still isomorphic.

%%%%%%%%%%%%%%%%
\subsection{Products on $\Hom(N,\R)$}

Andreas Thom pointed out that every $*$-homomorphism $\gamma \colon N \to N\bar{\otimes}N$ gives rise to a product on $\Hom(N,\R)$ as follows: $[\pi] \cdot_{\gamma} [\rho] := [ (\pi\otimes \rho) \circ \gamma]$.  (Where $\pi\otimes \rho \colon N\bar{\otimes}N \to \R\bar{\otimes}\R \subset \R$ is understood as in Remark \ref{rem:reformulate}.)  In particular, quantum groups such as $L(\Gamma)$ admit such homomorphisms and it might be interesting to study the resulting products.

%%%%%%%%%%%%%%%%
\subsection{Other issues}

There are lots of basic questions that are presently unresolved, such as what happens when $N$ is an increasing union or decreasing intersection of factors, what happens with finite-index subfactors, and whether there are Kunneth-type theorems relating $\Hom(N_1 \bar{\otimes} N_2, \R)$ and/or $\Hom(N_1 \ast N_2, \R)$ with $\Hom(N_1, \R)$ and $\Hom(N_2, \R)$.

We also don't know if other W$^*$-properties, such as the Haagerup property (cf.\ \cite{me-n-taka}) or Ozawa's solidity (cf.\ \cite{O}), are reflected in $\Hom(N, \R)$ (as is the case for property (T) in Corollary \ref{cor:T}).

%%%%%%%%%%%%%%%%
\subsection{Connes' Embedding Problem and Fixed Points}

Connes's Embedding Problem is equivalent to deciding whether or not $\Hom(N,\R)$ is non-empty for every separable II$_1$-factor $N$.  This tautology is certainly not helpful, but there is an angle worth pursuing.  Namely, it would be nice to resolve the following question.

\begin{subquestion}
\label{connes}
Given $N \subset \R$, fix a countable subgroup $\Gamma \subset \Out(N)$.  Does $\Hom(N, \R)$ have a $\Gamma$-fixed point?  In particular, if $N$ has property (T), does $\Hom(N,\R)$ have a $\Out(N)$-fixed point?\footnote{A theorem of Connes ensures $\Out(N)$ is countable in the property (T) case; see \cite[Theorem 12.1.19]{me-n-taka}.}
\end{subquestion}

A positive answer to Connes's Embedding Problem would imply a positive answer to this question, since it would provide an embedding $N \rtimes \Gamma \subset \R$, and the restriction of this embedding to $N$ would be a $\Gamma$-fixed point of $\Hom(N,\R)$.  Thus a counterexample to Question \ref{connes} would yield a counterexample to Connes's Embedding Problem. However, the convex-like structure of $\Hom(N,\R)$ might make it possible to construct fixed-points following something like Choquet theory. Of course, one would presumably need a Krein-Milman-type theorem first, to ensure a rich supply of extreme points.  %See \cite{BC} for a first step in this direction.

%%%%%%%%%%%%%%%%
\subsection{Farah's suggestion}

Ilijas Farah suggested using the enormity of the fundamental group of $\R$ to define an $\mathbb{R}_+$-cone structure on $\Hom(N, \R)$, in hopes that the Grothendieck construction would then produce a vector space and a canonical embedding of $\Hom(N,\R)$.  We believe this is equivalent to considering the semigroup of unitary equivalence classes of (non-unital) $*$-homomorphisms $\pi \colon N \to B(H) \bar{\otimes} \R$ with the property that the trace of $\pi(1_N)$ is finite.  There are subtleties to worry about, but this idea looks very promising and we hope it will lead to new results in the near future.  This is work in progress.

%%%%%%%%%%%%%%%%%%%%%%%%%%%%%%%%%%%%%%%%%%
\section{Appendix}

\begin{center}
By Narutaka Ozawa
\end{center}

%In this note, a \emph{finite von Neumann algebra} always comes together with a distinguished faithful normal tracial state, which is denoted by $\tau$. We fix a free ultrafilter $\omega$ on $\IN$ and let $R^\omega$ be the ultrapower of the hyperfinite $\mathrm{II}_1$-factor $R$. For a finite von Neumann algebra $N$, we consider the space $\Hom(N,R^\omega)$ of trace-preserving embeddings of $N$ into $R^\omega$. Let $N$ be separable in $2$-norm and $(x_i)$ be a dense sequence (or a finite or infinite generating sequence) in the unit ball of $N$. We equip the space $\Hom(N,R^\omega)$ with the pseudo-metric
%\[
%d(\pi,\sigma) = \inf\{
% \sum_i \frac{1}{2^i}\| \pi(x_i) - u\sigma(x_i)u^* \|_2
% : u \in \cU(M) \}.
%\]
%(The infimum is attained and $d$ gives rise to a metric on the space of unitary equivalence classes in $\Hom(N,R^\omega)$.) The topology induced by $d$ does not depend on the choice of $(x_i)$.

I follow the notation, definitions and conventions of the main body of this paper (except using $\IM_m$ to denote finite-dimensional matrices).

\begin{thm}\label{thm:nonsepA}
If $N \subset \R$ is not hyperfinite, then $\Hom(N,\R)$
is not separable with respect to the metric $\dist$.
\end{thm}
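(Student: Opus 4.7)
The goal is to produce an uncountable family $\{[\pi_\alpha]\} \subset \Hom(N, \R)$ with pairwise distances at least $\epsilon$ for some fixed $\epsilon > 0$, thereby witnessing non-separability of $(\Hom(N, \R), \dist)$.

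The starting point is Jung's theorem (already cited in the paper): applied to the non-hyperfinite factor $N$, it gives the existence of two distinct classes $[\pi_0] \neq [\pi_1]$ in $\Hom(N, \R)$, hence $3\epsilon := \dist([\pi_0], [\pi_1]) > 0$. The task is then to amplify ``two'' to ``uncountably many, uniformly separated.''

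For the amplification I would use an almost-disjoint family $(A_\alpha)_{\alpha \in I}$ of infinite subsets of $\N$ with $|I| = 2^{\aleph_0}$ (such families exist by the standard tree construction). Given lifts $(\pi_i^{(n)})_{n \in \N} \in \prod R$ of $\pi_i$ for $i=0,1$, I would construct $\pi_\alpha$ by a tensor-factor interleaving inside the natural inclusion $\R \bar{\otimes} \R \subset \R$ of Remark \ref{rem:reformulate}: on indices $n \in A_\alpha$ use the ``first factor'' lift $\pi_0^{(n)} \otimes 1$, and on $n \notin A_\alpha$ use the ``second factor'' lift $1 \otimes \pi_1^{(n)}$. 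The resulting sequence descends to an embedding $\pi_\alpha \colon N \to \R$ via the isomorphism $(R \bar{\otimes} R)^\omega \cong \R$.

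The main obstacle is the distance estimate: for $\alpha \neq \beta$, we need $\dist([\pi_\alpha], [\pi_\beta]) \geq \epsilon$. The subtlety is that the fixed ultrafilter $\omega$ only distinguishes subsets of $\N$ up to its own equivalence, so the naive version of the construction above collapses to just two classes (depending on whether $A_\alpha \in \omega$). Overcoming this requires encoding $\alpha$-dependent data beyond the $\omega$-class of $A_\alpha$ --- most plausibly by refining the indexing to $\N \times \N$ with the product ultrafilter structure, or by choosing an uncountable family of mutually commuting, pairwise well-separated subfactors of $\R$ to house the tensor-factor swap. Once such a refinement is in place, the lower bound $\dist([\pi_\alpha], [\pi_\beta]) \geq \epsilon$ should follow by showing that any conjugating unitary in $\R$, when analyzed on the indices where $\pi_\alpha$ and $\pi_\beta$ differ, must intertwine $\pi_0$ with $\pi_1$ on an $\omega$-large set --- contradicting the starting inequivalence $[\pi_0] \neq [\pi_1]$. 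This last step is where non-hyperfiniteness enters essentially: it is precisely the failure of synchronization (via unitary conjugation) for approximate embeddings that Jung's theorem exploits, and that we leverage here to manufacture a continuum of pairwise well-separated classes.
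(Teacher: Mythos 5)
Your plan has the right skeleton --- produce an uncountable family of classes that is uniformly $\e$-separated --- and you correctly flag the central danger with any naive ``index-set interleaving'' construction: the fixed ultrafilter $\omega$ cannot see a family $(A_\alpha)$ of subsets of $\N$ beyond whether each $A_\alpha$ lies in $\omega$, so the two-embedding tensor swap collapses. But the repair you gesture at (product ultrafilters, mutually commuting subfactors) is not developed, and I don't believe it can be: the deeper problem with your route is that Jung's theorem is purely \emph{qualitative}. It hands you two inequivalent classes $[\pi_0] \neq [\pi_1]$, i.e.\ a single positive distance, but it gives you no mechanism to force a \emph{uniform} lower bound across an uncountable family of hybrid embeddings. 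In particular, even if you could make the interleaved $\pi_\alpha$ pairwise inequivalent, there is nothing preventing $\dist([\pi_\alpha],[\pi_\beta]) \to 0$ along some net of pairs --- which would not contradict separability. You need a quantitative expression of non-hyperfiniteness, and that is exactly what your proposal is missing.

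The paper's proof supplies this via Haagerup's characterization: since $N$ is non-hyperfinite, there exist unitaries $u_1,\dots,u_k \in N$ with
\[
\delta := \Big\| \tfrac{1}{k}\sum_{i=1}^k u_i \otimes \bar{u}_i \Big\|_{N \otimes \bar{N}} < 1.
\]
The number $1-\delta$ is the uniform gap. Lemma \ref{lem:normlift} then lets one lift this operator-norm estimate to the finite stages: one inductively builds finite-dimensional $*$-homomorphisms $\theta_n \colon C \to \IM_{l(n)}$ (on a universal C$^*$-algebra $C$ mapping onto a weakly dense subalgebra of $N$) whose traces converge to $\tau\circ\theta$ and which satisfy, for all $m \neq n$,
\[
\Big\| \tfrac{1}{k}\sum_{i=1}^k \theta_n(X_i) \otimes \bar{\theta}_m(\bar X_i)\Big\| \le \delta.
\]
Each subsequence $\alpha\colon \N\to\N$ then yields $\pi_\alpha \in \Hom(N,\R)$ via $\theta_\alpha = (\theta_{\alpha(n)})_n$, and for eventually-disjoint $\alpha,\beta$ the estimate forces $\sup_v \tfrac{1}{k}\|\sum_i \pi_\alpha(u_i)\, v\, \pi_\beta(u_i)^*\|_2 \le \delta < 1$, hence a uniform lower bound on $\dist([\pi_\alpha],[\pi_\beta])$. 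An almost-disjoint family of subsequences finishes it. Note also that by passing from a diagonal interleave to genuinely distinct finite models $\theta_{\alpha(n)}$, the ultrafilter-collapse issue you identified never arises: the estimate holds for \emph{every} pair $m\neq n$, not just on an $\omega$-large set. So the missing idea in your proposal is precisely this Haagerup-type averaging criterion together with the norm-lifting lemma; without it, the ``amplification from two to uncountably many'' step has no quantitative foundation.
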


\begin{lem}\label{lem:normlift}
Let $(M_n)$ be a sequence of finite von Neumann algebras and
$(M_n)^\omega$ be its tracial ultraproduct.
Let $a_1,\ldots,a_k \in (M_n)^\omega$ and $c_1,\ldots,c_k \in \IM_m$.
Then, there are $a_{i,n} \in M_n$ such that with
$a_i = (a_{i,n})_n \in (M_n)^\omega$, $\sup_n \| a_{i,n}\| = \| a_i \|$ and
\[
\sup_{n} \| \sum_{i=1}^k c_i\otimes a_{i,n} \|_{\IM_m\otimes M_n}
= \| \sum_{i=1}^k c_i\otimes a_i \|_{\IM_m\otimes (M_n)^\omega}.
\]
Moreover, if $a_1=1$, then we can take $a_{1,n} \in \IC1$ for all $n$.
\end{lem}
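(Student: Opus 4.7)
The plan is to construct the lifts by combining three standard ultraproduct techniques: individual functional-calculus truncation, the achievability of norms in tracial ultraproducts, and a rescaling trick to convert approximate equalities into exact ones.

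First I would reduce to the case where $c_1, \ldots, c_k$ are linearly independent in $\IM_m$ (otherwise, eliminate dependent $c_i$'s by absorbing the coefficients into the $a_i$'s). Then I would take arbitrary bounded lifts $\hat a_{i,n}$ of $a_i$ and apply functional-calculus truncation at level $\|a_i\|$, producing $\tilde a_{i,n}$ with $\|\tilde a_{i,n}\| \leq \|a_i\|$ for every $n$. Since the truncation errors vanish in $\|\cdot\|_2$ along $\omega$, $(\tilde a_{i,n})$ still lifts $a_i$, and the identity $\sup_n \|\tilde a_{i,n}\| = \|a_i\|$ follows at once, since the general inequality $\|a_i\| \leq \sup_n \|\tilde a_{i,n}\|$ is automatic for any lift.

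Next I would use the canonical identification $(\IM_m\otimes M_n)^\omega \cong \IM_m \otimes (M_n)^\omega$ (valid because matrix amplification commutes with tracial ultraproducts) together with the standard achievability of norms in tracial ultraproducts of finite von Neumann algebras: there exists a lift $(\hat b_n) \in \IM_m\otimes M_n$ of $b := \sum c_i \otimes a_i$ with $\sup_n \|\hat b_n\| = \|b\|$. Using the dual basis of an extension of $\{c_1,\ldots,c_k\}$ to a basis of $\IM_m$, I extract coefficient lifts $\hat a'_{i,n} \in M_n$ of $a_i$, so that $\sum c_i \otimes \hat a'_{i,n}$ is a lift of $b$ living in $V \otimes M_n$ where $V = \Span(c_1,\ldots,c_k)$.

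The main obstacle is that the implicit projection $\IM_m \to V$ in the extraction step need not be completely contractive, so the joint operator norm $\|\sum c_i \otimes \hat a'_{i,n}\|$ can exceed $\|b\|$ even though $(\hat b_n)$ was optimal. To handle this I would use a diagonal argument over $\epsilon_k \to 0$: for each $k \in \N$, on an $\omega$-large set $S_k$ construct tuples $(a_{i,n}^{(k)})$ with $\|a_{i,n}^{(k)}\| \leq \|a_i\|$ and $\|\sum c_i \otimes a_{i,n}^{(k)}\| \leq \|b\| + 1/k$, and on the $\omega$-small complement set the lift to $0$ (which trivially respects both norm bounds). Patching through a nested choice $S_k \supset S_{k+1}$ produces intermediate lifts $(\tilde a_{i,n})$ with $\|\tilde a_{i,n}\| \leq \|a_i\|$ and $\lim_{n\to\omega}\|\sum c_i\otimes \tilde a_{i,n}\| = \|b\|$. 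A final rescaling $a_{i,n} := \lambda_n \tilde a_{i,n}$ with $\lambda_n := \min\{1,\|b\|/\|\sum c_i\otimes \tilde a_{i,n}\|\}$ (and $\lambda_n := 1$ when the denominator vanishes) then yields $\|a_{i,n}\| \leq \|a_i\|$ and $\|\sum c_i\otimes a_{i,n}\| \leq \|b\|$ for every $n$, while $\lambda_n \to 1$ along $\omega$ guarantees $(a_{i,n})$ still lifts $a_i$; the sup-equalities follow since the reverse inequalities are automatic.

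For the ``Moreover'' clause, I would simply fix $a_{1,n} := 1 \in \IC 1 \subset M_n$ from the start, which lifts $a_1 = 1$ with $\|a_{1,n}\| = 1 = \|a_1\|$, and carry out the extraction and rescaling above only on the remaining components $a_2,\ldots,a_k$, using that the fixed contribution $c_1 \otimes 1$ appears identically on both sides of the joint-norm inequality and so is compatible with the rescaling applied to the other coordinates.
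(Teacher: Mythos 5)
Your overall architecture (norm-preserving lift of the individual $a_i$'s, norm-preserving lift of $b=\sum c_i\otimes a_i$ in $(\IM_m\otimes M_n)^\omega\cong\IM_m\otimes(M_n)^\omega$, a patching argument over $\omega$-large sets, and a final rescaling by $\min\{1,\|b\|/\|\cdot\|\}$) matches the shape of the paper's proof, and the rescaling endgame is exactly right. But there is a genuine gap at the heart of the argument. You correctly identify the obstacle --- the coordinate extraction $\hat a'_{i,n}=(f_i\otimes\id)(\hat b_n)$ is a compression by a projection $\IM_m\to\Span(c_1,\dots,c_k)$ that need not be (completely) contractive, so $\|\sum_i c_i\otimes\hat a'_{i,n}\|$ may exceed $\|b\|$ uniformly in $n$ --- and then you dispose of it by asserting that ``for each $k$, on an $\omega$-large set $S_k$ one can construct tuples with $\|\sum_i c_i\otimes a^{(k)}_{i,n}\|\le\|b\|+1/k$.'' That assertion is, up to the rescaling you already have, equivalent to the lemma itself; nothing in your steps 1--4 produces such tuples, and it does not follow from \L o\'s-type transfer because the operator norm of $\sum_i c_i\otimes x_i$ is only lower semicontinuous in the joint $\|\cdot\|_2$-distribution of the $x_i$'s.

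The missing idea is a cutting argument that converts a $\|\cdot\|_2$-small error into an operator-norm-small one. Take norm-preserving lifts $(b_{i,n})_n$ of each $a_i$; then the norm-preserving lift of $b$ differs from $\sum_i c_i\otimes b_{i,n}$ by a sequence $(z_n)$ in $\IM_m\otimes K_\tau$, i.e.\ with $\lim_{n\to\omega}\|z_n\|_2=0$. Because $\IM_m$ is finite-dimensional, one can choose projections $p_n\in M_n$ with $\lim_{n\to\omega}\tau(p_n)=0$ and $\lim_{n\to\omega}\|z_n(1\otimes p_n^\perp)\|=0$ \emph{in operator norm} (apply, entrywise, the estimate that for $x$ in a finite von Neumann algebra the spectral projection $p=\chi_{[\e,\infty)}(|x|)$ has $\tau(p)\le\tau(|x|)/\e$ and $\|xp^\perp\|\le\e$, and take the join of the resulting projections). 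Compressing by $1\otimes p_n^\perp$ then gives
\[
\Big\|\sum_i c_i\otimes p_n^\perp b_{i,n}p_n^\perp\Big\|\le\Big\|(1\otimes p_n^\perp)\Big(z_n+\sum_i c_i\otimes b_{i,n}\Big)(1\otimes p_n^\perp)\Big\|+\|(1\otimes p_n^\perp)z_n(1\otimes p_n^\perp)\|\le\|b\|+o_\omega(1),
\]
while $p_n^\perp b_{i,n}p_n^\perp$ (or $p_n^\perp b_{i,n}p_n^\perp+\tau(a_i)p_n$, which keeps the scalar case scalar and costs nothing in norm) still lifts $a_i$ since $\tau(p_n)\to_\omega 0$. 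This is the step that legitimately produces your tuples $(a^{(k)}_{i,n})$; after it, your rescaling finishes the proof. A secondary issue: in the ``moreover'' clause you freeze $a_{1,n}=1$ and rescale only the other coordinates, but then $c_1\otimes 1+\lambda_n\sum_{i\ge2}c_i\otimes\tilde a_{i,n}=\lambda_n\sum_i c_i\otimes\tilde a_{i,n}+(1-\lambda_n)c_1\otimes1$ need not have norm $\le\|b\|$, since $\|c_1\|$ can exceed $\|b\|$; the statement only asks that $a_{1,n}\in\IC 1$, so you should rescale the first coordinate along with the others.
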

\begin{proof}
Recall that $(M_n)^\omega=(\prod M_n)/K_\tau$, where
\[
K_{\tau} = \{ (x_n) \in \prod_{n=1}^\infty M_n : \lim_{n\to\omega} \|x_n\|_2 = 0 \}.
\]
Let $(b_{i,n})_n \in \prod M_n$ be a norm-preserving lift of $a_i$.
In case where $a_1=1$, take $b_{1,n}=1$ for all $n$.
Since the element $\sum c_i\otimes a_i$ in $\IM_m\otimes (M_n)^\omega$ has a
norm-preserving lift in $\IM_m\otimes\prod M_n$,
there is $(z_n) \in \IM_m\otimes K_{\tau}$ such that
\[
\sup_n \| z_n + \sum_{i=1}^k c_i\otimes b_{i,n} \|_{\IM_m\otimes M_n}
= \| \sum_{i=1}^k c_i\otimes a_i \|_{\IM_m\otimes (M_n)^\omega}.
\]
Since $\IM_m$ is finite-dimensional,
one can find projections $p_n\in M_n$ such that
\[
\lim_{n\to\omega} \tau(p_n) = 0
\mbox{ and }
\lim_{n\to\omega}\|z_n(1\otimes p_n^\perp)\|=0.
\]
This follows from the facts that for an element $x$ in a finite von Neumann algebra,
the spectral projection $p=\chi_{[\e,\infty)}(|x|)$ satisfies
$\tau(p)\le\tau(|x|)/\e$ and $\| xp^\perp \|\le\e$;
and that for any projections $\{p_j\}$, one has $\tau(\bigvee p_j)\le\sum_j\tau(p_j)$.
Thus
\[
b_{i,n}' := p_n^\perp b_{i,n} p_n^\perp + \tau(a_i)p_n
\]
satisfy that
$a_i = (b_{i,n}')_n$ in $M$ and
\[
\lim_{n\to\omega} \| \sum_{i=1}^k c_i\otimes b_{i,n}' \|_{\IM_m\otimes M_n}
= \| \sum_{i=1}^k c_i\otimes a_i \|_{\IM_m\otimes (M_n)^\omega}.
\]
Consequently,
\[
a_{i,n} := \min\{ \frac{\| \sum_{j=1}^k c_j\otimes a_j \|_{\IM_m\otimes (M_n)^\omega}}
{\| \sum_{j=1}^k c_j\otimes b_{j,n}' \|_{\IM_m\otimes M_n}},1\}\, b_{i,n}'
\]
satisfy the desired conditions.
\end{proof}

\begin{proof}[Proof of Theorem \ref{thm:nonsepA}]
Let $N \subset \R$ be non-hyperfinite. Then $N$ is also embeddable into the ultraproduct
of matrix algebras $(\IM_n)$, which is denoted by $(\IM_n)^\omega$. We regard $N\subset (\IM_n)^\omega$.
By Lemma 2.2 in \cite{haagerup:dec}, there are a non-zero central projection $p \in N$
and a finite tuple of unitary elements $u_1,\ldots,u_k \in Np$ such that
\[
\delta:=\| \frac{1}{k}\sum_{i=1}^k u_i\otimes\bar{u}_i \|_{N\otimes\bar{N}}<1.
\]
It suffices to show $\Hom(Np,R^\omega)$ is non-separable.
So, we assume $p=1$.
Let $C$ be the universal C$^*$-algebra generated by a sequence $(X_i)_{i=1}^\infty$
of contractions and fix a $*$-homomorphism $\theta\colon C\to N$ such that
$\theta(X_i)=u_i$ for $i=1,\ldots,k$ and that $\theta(C)$ is weakly dense in $N$.
We also fix a metric $d$ on the state space of $C$ which induces the weak$^*$-topology.
We will inductively find an increasing sequence $l(n) \in \IN$
and $*$-homomorphisms $\theta_n\colon C \to \IM_{l(n)}$ such that
$\tau_{l(n)}\circ\theta_n\to\tau\circ\theta$ and
\[
\sup_{m\neq n}\| \frac{1}{k}\sum_{i=1}^k \theta_n(X_i)\otimes\bar{\theta}_m(\bar{X}_i)
 \|_{\IM_{l(n)}\otimes\bar{\IM}_{l(m)}}\le\delta^{1/2}.
\]
Indeed, suppose $\theta_m$'s are given for $m=1,\ldots,n-1$, and let 
$c_i = \bigoplus_{m=1}^{n-1} \bar{\theta}_m(\bar{X}_i)$ for $i=1,\ldots,k$.
By Haagerup's Cauchy--Schwarz inequality (Lemma~2.4 in \cite{haagerup:dec}),
one has
\[
\| {1 \over k}\sum_{i=1}^k u_i \otimes c_i \| \le
 \| {1 \over k}\sum_{i=1}^k u_i \otimes \bar{u}_i \|^{1/2}
 \| {1 \over k}\sum_{i=1}^k \bar{c}_i \otimes c_i \|^{1/2}
 \le \delta^{1/2}.
\]
Thus, Lemma~\ref{lem:normlift}, applied to $a_i = \theta(X_i)$ for $i=1,\ldots,k(n)$
(where $k(n)$ is large enough) and $c_i$ (let $c_i=0$ for $i>k$), implies that there are $l(n)$ and 
contractions $x_{i,n} \in \IM_{l(n)}$ for $i=1,\ldots,k(n)$
(and let $x_{i,n}=0$ for $i>k(n)$)
such that the $*$-homomorphism $\theta_n$ defined by $\theta_n(X_i)=x_{i,n}$ satisfies
$d(\tau_{l(n)}\circ\theta_n,\tau\circ\theta)<1/n$ and
\[
\max_{m=1,\ldots,n-1}\| \frac{1}{k}\sum_{i=1}^k \theta_n(X_i)\otimes\bar{\theta}_m(\bar{X}_i)
 \|_{\IM_{l(n)}\otimes\bar{\IM}_{l(m)}}\le\delta^{1/2}.
\]
We embed each $\IM_{k(n)}$ into $R$ and regard $\theta_n$ as $*$-homomorphisms into $R$.
For a subsequence $\alpha\colon \IN \to \IN$, we define
$\theta_\alpha\colon C\to R^\omega$ by $\theta_\alpha(x) = (\theta_{\alpha(n)})_n$.
Since $\tau\circ\theta_n\to\tau\circ\theta$, the von Neumann algebra generated by
$\theta_\alpha(C)$ is canonically isomorphic to $N$ and
there is $\pi_\alpha \in \Hom(N,R^\omega)$ such that
$\pi_\alpha\circ\theta=\theta_\alpha$.
Let $\beta$ be another subsequence such that $\alpha(n)\neq\beta(n)$
for all but finitely many $n$.
Then, we claim that
\[
\sup_{v \in \cU(R^\omega)}\|\frac{1}{k}\sum_{i=1}^k \pi_\alpha(u_i) \, v \, \pi_\beta(u_i)^*\|_2\le\delta^{1/2}.
\]
This concludes the non-separability of $\Hom(N,R^\omega)$.
To prove the claim, let $(v_n)_n$ be a sequence of unitary elements in $R$
such that $(v_n)_n = v$ in $R^\omega$.
Since $R\otimes\bar{R}$ acts on $L^2(R)$ by $(a\otimes\bar{b})\xi=a\xi b^*$, one has
\begin{align*}
\|\frac{1}{k}\sum_{i=1}^k \pi_\alpha(u_i) \, v \, \pi_\beta(u_i)^*\|_2
 &= \lim_{n\to\omega} \|\frac{1}{k}\sum_{i=1}^k \theta_{\alpha(n)}(X_i)
  \, v_n \, \theta_{\beta(n)}(X_i)^*\|_2\\
 &\le \lim_{n\to\omega} \|\frac{1}{k}\sum_{i=1}^k \theta_{\alpha(n)}(X_i)
  \otimes \bar{\theta}_{\beta(n)}(\bar{X}_i)\|_{2,R\otimes\bar{R}}\\
 &\le \delta^{1/2}.
\end{align*}
\end{proof}

Lemma~\ref{lem:normlift} has another interesting consequence.
\begin{prop}
Let $(M_n)$ be a sequence of finite von Neumann algebras and
$Q\colon\tilde{M}\to M$ be the canonical quotient map from
the norm-ultraproduct $\tilde{M}$ onto the tracial ultraproduct $M$.
Then, for any separable C$^*$-algebra $A\subset M$, there exists
a contractive completely positive map $\p\colon A\to\tilde{M}$ such that
$Q\circ\p=\id_A$.
\end{prop}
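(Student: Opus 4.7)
The strategy is to promote Lemma \ref{lem:normlift} from a single-tuple statement to a simultaneous statement for all matrix coefficients at once, thereby producing a completely isometric linear lift of any separable operator subsystem of $M$, and then invoke the fact (Choi/Paulsen) that a unital completely contractive linear map between operator systems is automatically completely positive.

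First I would strengthen Lemma \ref{lem:normlift} as follows: given a countable sequence $a_1 = 1, a_2, a_3, \ldots \in M$, I would produce a single family of norm-preserving lifts $(b_{j,n})_{n \in \N}$ of the $a_j$'s (with $b_{1,n} = 1$) such that the induced linear map $a_j \mapsto [(b_{j,n})] \in \tilde M$ is completely isometric on $\mathrm{span}\{a_j\}$, i.e., $\lim_{n\to\omega}\|\sum_i c_i \otimes b_{j_i,n}\| = \|\sum_i c_i \otimes a_{j_i}\|_{\IM_m \otimes M}$ for every finite tuple $(c_i) \in \IM_m^k$ and every $m$. To do this I would enumerate a countable dense family of ``test data'' $T_l = (k_l, m_l, c^{(l)}, j^{(l)})$ (rational matrix coefficients, varying finite index sets), apply Lemma \ref{lem:normlift} to each $T_l$ individually to obtain $z_n^{(l)} \in \IM_{m_l} \otimes K_\tau$ and a cutting projection $p_n^{(l)} \in M_n$ with $\lim_\omega \tau(p_n^{(l)}) = 0$ and $\lim_\omega \|z_n^{(l)}(1\otimes (p_n^{(l)})^\perp)\| = 0$, and then amalgamate the $p_n^{(l)}$'s into a single family $p_n$ that controls all $l$ simultaneously. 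Having fixed one norm-preserving lift $(b_{j,n})$ of each $a_j$, I would set $b_{j,n}' := p_n^\perp b_{j,n} p_n^\perp + \tau(a_j) p_n$ as in the original proof; because $b_{1,n}' = p_n^\perp + p_n = 1$, unitality is preserved, and a block decomposition argument (plus the trace-norm estimate $\|\sum c_i \tau(a_{j_i})\| \leq \|\sum c_i \otimes a_{j_i}\|_M$ coming from $\id_{\IM_m}\otimes \tau$) recovers the matrix norm in the $\omega$-limit for each test.

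Second, with this strengthened lemma in hand, I would deduce the proposition. If $1 \in A$, choose a countable norm-dense $*$-closed subset $\{a_j\}_{j\ge 1}$ of $A$ with $a_1 = 1$; otherwise pass to $A + \IC\cdot 1$ and restrict at the end. The strengthened lemma produces a unital, linear, completely isometric map $\varphi_0 : \mathrm{span}\{a_j\} \to \tilde M$ lifting the inclusion into $M$. By Paulsen's theorem, a unital completely contractive linear map between operator systems is automatically completely positive, so $\varphi_0$ is u.c.p. Extend $\varphi_0$ by continuity to a u.c.p. map $\varphi \colon A + \IC\cdot 1 \to \tilde M$; since $Q \circ \varphi$ agrees with $\id$ on a dense subset and both sides are continuous, $Q\circ\varphi = \id$. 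Restricting $\varphi$ to $A$ yields the desired c.c.p. section.

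\textbf{Main obstacle.} The delicate point is the amalgamation of the cutting projections in Step 1. One needs a single sequence $p_n$ with $\lim_\omega \tau(p_n) = 0$, yet with $p_n \geq p_n^{(l)}$ ``eventually along $\omega$'' for every fixed $l$, so that $\|z_n^{(l)}(1\otimes p_n^\perp)\|$ still tends to $0$ along $\omega$. The natural attempt $p_n := \bigvee_{l \leq L} p_n^{(l)}$ fails if $L$ is held fixed (it only handles finitely many tests) and also fails if $L$ is too large (the join's trace need not go to $0$). The remedy is a ``moving $L$'' trick: choose nested ultrafilter sets $S_l \in \omega$ on which both $\tau(p_n^{(l)})$ and $\|z_n^{(l)}(1\otimes(p_n^{(l)})^\perp)\|$ drop below $2^{-l}$ (truncating each $p_n^{(l)}$ to $0$ off $S_l$ if necessary so this holds for all $n$), then define $L(n) := \max\{L : n \in S_1\cap\cdots\cap S_L\}$ and $p_n := \bigvee_{l\leq L(n)} p_n^{(l)}$. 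Since $L(n) \to \infty$ along $\omega$, every fixed $l$ is eventually dominated by $p_n$; and since the traces are geometrically small on the relevant ultrafilter set, $\tau(p_n)$ still tends to $0$ along $\omega$. Executing this diagonal carefully, while preserving the block-decomposition norm estimates used at the end of the proof of Lemma \ref{lem:normlift}, is where the real work lies.
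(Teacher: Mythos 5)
Your proposal is correct and follows essentially the same route as the paper's proof: the same key Lemma \ref{lem:normlift}, a countable dense family of matrix-coefficient tests, a moving-index diagonalization along $\omega$, and the automatic complete positivity of a unital complete contraction. The only substantive difference is where the diagonal lives — the paper applies the lemma afresh at each level $l$ to get lift families $\theta^{(l)}_n$ handling the first $l$ tests exactly and then selects $l(n)$ by a trace-closeness criterion, whereas you fix one family of lifts and amalgamate the cutting projections $p_n^{(l)}$ up to a moving cutoff $L(n)$; this is an equivalent reorganization, and the step you flag as delicate does go through, since the head/tail estimate $\tau(p_n)\le\sum_{l<l_0}\tau(p_n^{(l)})+2^{1-l_0}$ (finitely many terms each tending to $0$ along $\omega$, plus a uniformly small geometric tail) yields $\tau(p_n)\to 0$ along $\omega$.
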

\begin{proof}
We may assume that $A$ contains the unit of $M$.
Let $C$ be the universal C$^*$-algebra generated by a sequence $(X_i)_{i=1}^\infty$
of contractions and fix a surjective $*$-homomorphism $\theta\colon C\to A$
such that $\theta(X_1)=1$.
Let $a_i=\theta(X_i)$.
Let $\IM_n\hookrightarrow\IK$ be the standard embedding into the left upper corner
and $\IK_0=\bigcup\IM_n$.
Choose $c^{(l)}_i \in \IK_0$ so that for every $k$ the sequence
$(c^{(l)}_1,\ldots,c^{(l)}_k)_{l=1}^\infty$ is dense in $\IK^k$.
By Lemma~\ref{lem:normlift}, there are contractions $a^{(l)}_{i,n} \in M_n$ such that
$(a^{(l)}_{i,n})_n=a_i$ in $M$ for every $i$ and $l$, and that
\[
\sup_{n} \| \sum_{i=1}^k c^{(m)}_i\otimes a^{(l)}_{i,n} \|_{\IK\otimes M_n}
= \| \sum_{i=1}^k c^{(m)}_i\otimes a_i \|_{\IK\otimes M}
\]
for every $k,m$ and $l$ with $l>\max\{k,m\}$.
Let $\theta^{(l)}_n\colon C\to M_n$ be the $*$-homomorphism defined by
$\theta^{(l)}_n(X_i)=a^{(l)}_{i,n}$.
Let $d$ be a metric on the state space of $C$ which induces the weak$^*$-topology.
Then, for each $l$, one has $\lim_{n\to\omega}d(\tau\circ\theta^{(l)}_n,\tau\circ\theta)=0$.
For each $n$, we define $l(n)\in\IN$ by
\[
l(n) = \max\{ l=1,\ldots,n : d(\tau\circ\theta^{(l)}_n,\tau\circ\theta) < \frac{1}{l-1} \}.
\]
Since $l(n)\geq l$ on $\{ n : d(\tau\circ\theta^{(l)}_n,\tau\circ\theta) < 1/(l-1) \}$
and the latter set belongs to $\omega$, one has $\lim_{n\to\omega} l(n)=+\infty$.
Let $\theta_n=\theta^{(l(n))}_n$ and $a_{i,n}=\theta_n(X_i)$.
Then, one has
$\lim_{n\to\omega}d(\tau\circ\theta_n,\tau\circ\theta)
\le\lim_{n\to\omega}1/(l(n)-1)=0$, and
\[
\lim_{n\to\omega} \| \sum_{i=1}^k c^{(m)}_i\otimes a_{i,n} \|_{\IK\otimes M_n}
\le \| \sum_{i=1}^k c^{(m)}_i\otimes a_i \|_{\IK\otimes M}
\]
for all $k,m\in\IN$.
Therefore we can define the unital and completely contractive (and hence completely positive)
map $\p\colon A\to\tilde{M}$ by $\p(a_i)=(a_{i,n})_n$ in $\tilde{M}$.
Since $\tau\circ\theta_n\to_\omega\tau\circ\theta$, one has $Q\circ\p=\id_A$.
\end{proof}

I don't know whether one can take $\p$ to be a $*$-homomorphism.
If this is true, its proof would be very difficult in light of
Haagerup and Thorbj{\o}rnsen's results (cf.\ \cite{HT}).

\begin{lem}
Let $\Lambda$ be a finite group generated by a symmetric subset $S\subset\Lambda$
and
\[
\delta:=\|\frac{1}{|S|}\sum_{g\in S}\lambda^0(g) \|<1,
\]
where $\lambda^0$ is the restriction of the regular representation
$\lambda$ to $\ell_2\Lambda\ominus\IC{\mathbf 1}$.
Suppose that $(\alpha_g)_{g\in S}$ are complex numbers of modulus $1$ and
$\xi\in\ell_2\Lambda$ is a unit vector such that
\[
\e:=\max_{g\in S}\| \lambda(g)\xi - \alpha_g\xi \|>0.
\]
Then there is a character\footnote{By a character, we mean a unitary character.}
$\chi$ on $\Lambda$ such that
\[
\max_{g\in S}| \bar{\chi}(g) - \alpha_g | \le \left(\frac{100\e}{1-\delta}\right)^{1/2}.
\]
\end{lem}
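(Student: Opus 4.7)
\emph{Plan.} The strategy is to lift the hypothesis to the tensor-square representation $U := \lambda\otimes\bar\lambda$ on $\ell_2\Lambda \otimes \overline{\ell_2\Lambda}$, exploit its spectral gap to localize the vector $\eta := \xi \otimes \bar\xi$ near the $U$-invariant subspace, and then read the desired character off the dominant isotypic piece. Writing $\lambda(g)\xi = \alpha_g\xi + \delta_g$ with $\|\delta_g\| \le \epsilon$ and using $|\alpha_g|^2 = 1$, the direct expansion
\[
U(g)\eta - \eta = \alpha_g\,\xi \otimes \bar\delta_g + \bar\alpha_g\,\delta_g \otimes \bar\xi + \delta_g \otimes \bar\delta_g
\]
shows $\|U(g)\eta - \eta\| \le 2\epsilon + \epsilon^2 \le 3\epsilon$ for every $g \in S$ (we may assume $\epsilon \le 1$, since the target bound is otherwise vacuous).

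Every nontrivial irreducible subrepresentation of $U$ is a nontrivial irreducible of $\Lambda$ and therefore appears in $\lambda^0$; consequently $\|M|_{H_1}\| \le \delta$ for $M := \frac{1}{|S|}\sum_{g\in S} U(g)$, where $H_1$ is the orthogonal complement of the $U$-invariants and $P_0$ the corresponding projection. The standard variance estimate --- expand $\|U(g)\eta_1 - \eta_1\|^2 = 2\|\eta_1\|^2 - 2\Re\langle U(g)\eta_1, \eta_1\rangle$ for $\eta_1 := (I - P_0)\eta$, sum over $g \in S$, and bound the cross term using the spectral-gap estimate --- then yields
\[
\|(I - P_0)\eta\|^2 \le \frac{9\epsilon^2}{2(1-\delta)}.
\]

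Decompose $\ell_2\Lambda = V \oplus W$ with $V$ the span of the one-dimensional characters of $\Lambda$, and write $\xi = \xi_V + \xi_W$, $\xi_V = \sum_\chi c_\chi\, \chi/\sqrt{|\Lambda|}$. By Schur's lemma neither $V \otimes \overline{W}$ nor $W \otimes \overline{V}$ contains any $U$-invariants, and the $U$-invariants in $V\otimes\overline{V}$ are spanned by the vectors $\{\chi/\sqrt{|\Lambda|} \otimes \overline{\chi/\sqrt{|\Lambda|}}\}_{\chi}$. Projecting the preceding inequality onto the four orthogonal summands gives, in particular,
\[
\|\xi_V\|^2\|\xi_W\|^2 \le \frac{9\epsilon^2}{2(1-\delta)} \quad\text{and}\quad \|\xi_V\|^4 - \sum_\chi |c_\chi|^4 \le \frac{9\epsilon^2}{2(1-\delta)}.
\]
We may assume $\epsilon/(1-\delta) \le 1/25$, as otherwise $\sqrt{100\epsilon/(1-\delta)} \ge 2$ and any character satisfies the conclusion trivially. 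In this regime the first inequality forces $\|\xi_W\|^2 = O(\epsilon^2/(1-\delta))$ and $\|\xi_V\|$ close to $1$, and the second, together with the elementary inequality $\max_\chi |c_\chi|^2 \ge \sum_\chi |c_\chi|^4/\|\xi_V\|^2$, produces a dominant character $\chi^* := \arg\max_\chi |c_\chi|$ satisfying $|c_{\chi^*}|^2 \ge 1 - O(\epsilon^2/(1-\delta))$ and in particular $|c_{\chi^*}| \ge 1/\sqrt{2}$.

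Setting $\eta^* := c_{\chi^*}\, \chi^*/\sqrt{|\Lambda|}$, the Pythagorean identity gives $\|\xi - \eta^*\| = \sqrt{1 - |c_{\chi^*}|^2} = O(\epsilon/\sqrt{1-\delta})$. Since $\lambda(g)\eta^* = \bar{\chi^*}(g)\,\eta^*$, decomposing $(\lambda(g) - \alpha_g)\eta^* = (\lambda(g) - \alpha_g)\xi + (\lambda(g) - \alpha_g)(\eta^* - \xi)$ and taking norms yields
\[
|c_{\chi^*}|\cdot|\bar{\chi^*}(g) - \alpha_g| \le \|\lambda(g)\xi - \alpha_g\xi\| + 2\|\eta^* - \xi\| = O(\epsilon/\sqrt{1-\delta}),
\]
so dividing by $|c_{\chi^*}| \ge 1/\sqrt{2}$ produces $\max_{g \in S}|\bar{\chi^*}(g) - \alpha_g| = O(\epsilon/\sqrt{1-\delta})$, which (in the small-$\epsilon$ regime) is comfortably inside $\sqrt{100\epsilon/(1-\delta)}$. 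The argument is conceptually clean; the main effort is tracking numerical constants through the variance and projection estimates to fit within the stated constant $100$. The one substantive choice is to use $U = \lambda\otimes\bar\lambda$ in the spectral-gap step rather than the more natural twisted operator $\tfrac{1}{|S|}\sum_g \bar\alpha_g\lambda(g)$ on $\ell_2\Lambda$: for arbitrary unimodular $(\alpha_g)$ the latter need not inherit the spectral gap $\delta$, whereas the conjugation representation $U$ does.
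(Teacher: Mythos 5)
Your argument is close in spirit to the paper's: both pass to the conjugation representation $\lambda\otimes\bar\lambda\cong\Ad\lambda$ acting on the rank-one tensor $\xi\otimes\bar\xi$, both use the spectral gap to bound the distance to the invariant subspace, and both extract the character from a dominant piece of the projected vector. (The paper gets $\|\sigma(g)X-X\|_{HS}\le 2\e$ via the slightly sharper split $\alpha_g\xi\otimes\bar\delta_g+\delta_g\otimes\overline{\lambda(g)\xi}$ rather than your $3\e$, but that is cosmetic.) The real difference is the extraction step. The paper forms the averaged operator $Y=\tfrac{1}{|\Lambda|}\sum_g\sigma(g)X$, which is a positive trace-one operator in the commutant of $\lambda$ with $\Tr(Y^2)\ge 1-\e_0^2$; this forces a dominant eigenvalue, and the corresponding rank-one spectral projection --- lying in the commutant --- automatically has its range spanned by a one-dimensional $\lambda$-invariant vector, hence a character. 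You instead decompose $\ell_2\Lambda=V\oplus W$ (characters versus higher-dimensional isotypes) and read the character off the diagonal of $\xi_V\otimes\bar\xi_V$.

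There is a gap in your route. Your first inequality, $\|\xi_V\|^2\|\xi_W\|^2\le\tfrac{9\e^2}{2(1-\delta)}$, only controls the product; since $\|\xi_V\|^2+\|\xi_W\|^2=1$, it forces $\min(\|\xi_V\|^2,\|\xi_W\|^2)$ to be small, but it does not by itself tell you \emph{which} one is small. Your claim that ``the first inequality forces $\|\xi_W\|^2=O(\e^2/(1-\delta))$'' silently excludes the scenario in which $\xi$ is concentrated on $W$. In that scenario your second inequality, $\|\xi_V\|^4-\sum_\chi|c_\chi|^4\le\tfrac{9\e^2}{2(1-\delta)}$, becomes vacuous (both sides are tiny), the dominant coefficient $|c_{\chi^*}|$ is small, and the final division by $|c_{\chi^*}|$ destroys the bound. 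To close the gap you need the one piece of your decomposition you never use, the $W\otimes\overline W$ block: a rank-one projection supported on $W$ cannot be close in Hilbert--Schmidt norm to its $\Ad\lambda$-average, because the commutant of $\lambda|_W$ decomposes as $\bigoplus_\pi 1_{d_\pi}\otimes\IM_{d_\pi}$ with every $d_\pi\ge 2$, so any positive trace-one element there has $\|\cdot\|_{HS}^2\le 1/2$; this forces $\|(I-P_0)(\xi_W\otimes\bar\xi_W)\|^2\ge\|\xi_W\|^4/2$ and, combined with your spectral-gap bound, shows $\|\xi_W\|^2$ (not just the product) is $O(\e^2/(1-\delta))$ in your small-$\e$ regime. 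This is precisely the structural fact the paper's spectral-decomposition argument encodes automatically --- the dominant eigenprojection of $Y$ is rank one and in the commutant, hence forced to be a character projection --- which is why the paper never needs the $V\oplus W$ split.
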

\begin{proof}
We may assume that $100\e/(1-\delta)<4$.
We consider the unitary representation $\sigma=\Ad\lambda$ of $\Lambda$
on $\IB(\ell_2\Lambda)$ equipped with the Hilbert--Schmidt norm,
and note that $\sigma$ (or any other unitary representation) is contained in a multiple of $\lambda$.
Then, for the rank one operator $X:=\xi\otimes\bar{\xi}$, one has
$\| X - \sigma(g)X\|_{2,\Tr} \le 2\e$ for every $g\in S$.
By assumption, the contraction $h=\frac{1}{|S|}\sum_{g\in S}\sigma(g)$ has spectrum
contained in $[-1,\delta]\cup\{1\}$ and the spectral projection corresponding to
the spectral subset $\{1\}$ is the projection $\frac{1}{|\Lambda|}\sum_{g\in\Lambda}\sigma(g)$
onto $\sigma$-invariant vectors.
Since $\|X-hX\|_{2,\Tr}\le 2\e$, the positive operator
$Y=\frac{1}{|\Lambda|}\sum_{g\in\Lambda}\sigma(g)X$
---write $Y=\sum_i \mu_i\zeta_i\otimes\bar{\zeta}_i$ for spectral decomposition---
satisfies
\[
1 - \left(\frac{2\e}{1-\delta}\right)^2 \le \Tr(Y^*Y) = \Tr(X^*Y)
= \sum_i \mu_i|\ip{\zeta_i,\xi}|^2.
\]
Let's write $\e_0=2\e/(1-\delta)$ for simplicity.
Since $\sum_i|\ip{\zeta_i,\xi}|^2\le1$, there is $i_0$ such that $\mu_{i_0}\geq 1- \e_0^2$.
Moreover, $\mu_i\le (1-\mu_{i_0}^2)^{1/2}\le 2^{1/2}\e_0$ for all $i\neq i_0$.
It follows that
\[
|\ip{\zeta_{i_0},\xi}|^2 \geq 1 - \e_0^2 -\sum_{i\neq i_0}\mu_i|\ip{\zeta_i,\xi}|^2 \geq 1-2\e_0.
\]
Hence, for $\zeta=\gamma\zeta_{i_0}$, where $\gamma$ is a complex number of modulus $1$
such that $\ip{\zeta,\xi}>0$,
one has $\| \zeta - \xi \|^2\le2(1-\ip{\zeta,\xi}^2)\le 4\e_0$ and
the rank one operator $\zeta\otimes\bar{\zeta}$
is a spectral projection of $Y$ and thus $\sigma$-invariant.
This means that $|\zeta|^2$ is constant (whose value is $|\Lambda|^{-1}$)
on $\Lambda$, and $\chi(g)=|\Lambda|\zeta(g)\bar{\zeta}(1)$
is a character on $\Lambda$.
Moreover,
\begin{align*}
|\bar{\chi}(g)-\alpha_g|^2 &= \|\lambda(g)\zeta - \alpha_g\zeta \|^2\\
&\le 2\|\lambda(g)\xi - \alpha_g\xi \|^2+ 2\|(\lambda(g)-\alpha_g)(\zeta-\xi)\|^2\\
&\le 2\e^2+32\e_0
\end{align*}
for every $g\in S$.
\end{proof}

Let $\G$ be a residually finite group and $\pi_n\colon\G\to\G_n$ be
a sequence of finite quotients such that for every $g\neq1$
one has $\pi_n(g)\neq1$ eventually as $n\to\infty$.
Then, the sequence $\pi_n\colon\G\to L(\G_n)\hookrightarrow R$ gives
rise to an embedding $\pi$ of $L(\G)$ into $R^\omega$.
The class $[\pi]\in\Hom(L(\G),R^\omega)$ does not depend on
the choice of $L(\G_n)\hookrightarrow R$.
Recall from Definition 4.3.1 in \cite{lubotzky} that
a finitely generated group $\G$, say generated by a finite symmetric subset $S$,
is said to have property $(\tau)$ with respect to the family $\{ \pi_n\}$ if
\[
\sup_n\|\frac{1}{|S|}\sum_{g\in S}\lambda_{\G_n}^0(g)\|<1,
\]
where $\lambda_{\G_n}^0$ is the obvious unitary representation
of $\G$ on $\ell_2(\G_n)\ominus\IC\mathbf{1}$.

Let $\chi$ be a character on $\G$. Then, the map
$\lambda(g)\mapsto\chi(g)\lambda(g)$ extends to a $*$-automorphism on $L(\G)$,
which is still written as $\chi$.

\begin{thm}\label{thm:charaction}
Let $\G$ and $\pi$ be as above,
and assume that $\G$ has property $(\tau)$ with respect to $\{ \pi_n\}$, and that
$\G_n$ have no non-trivial characters.
Then, for every non-trivial character $\chi$ on $\G$,
one has $[\pi]\neq[\pi\circ\chi^{-1}]$ in $\Hom(L(\G),R^\omega)$.
\end{thm}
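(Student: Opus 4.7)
The plan is to argue by contradiction, transporting the approximate intertwining to the finite quotients $\Gamma_n$ and running the preceding lemma's spectral-gap argument inside the larger Hilbert space $L^2(R)$. Suppose for contradiction that $[\pi]=[\pi\circ\chi^{-1}]$ for some non-trivial character $\chi$. Then there is a unitary $u\in R^\omega$ with $u\,\pi(\lambda(g))\,u^* = \chi(g)\,\pi(\lambda(g))$ for every $g\in\Gamma$. Lift $u$ to unitaries $(u_n)\in\prod R$ and fix a finite symmetric generating set $S\subset\Gamma$ witnessing property $(\tau)$ with gap $\delta<1$. Writing $g_{s,n}:=\pi_n(s)\in\Gamma_n$ and rearranging the intertwining equation,
\[
\epsilon_n := \max_{s\in S}\bigl\|\lambda(g_{s,n})^*\,u_n\,\lambda(g_{s,n}) - \chi(s)\,u_n\bigr\|_2 \longrightarrow 0 \quad\text{along }\omega.
\]
Equivalently, letting $\tilde\sigma(g)\xi := \lambda(g)\,\xi\,\lambda(g)^*$ denote the conjugation representation of $\Gamma_n$ on $L^2(R)$, the unit vector $u_n\in L^2(R)$ is an approximate $\chi(s)$-eigenvector of $\tilde\sigma(g_{s,n}^{-1})$ with defect $\epsilon_n$.

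The next step is to imitate the preceding lemma with $\tilde\sigma$ playing the role of $\lambda$. Because $\Gamma_n$ is finite, every unitary representation of $\Gamma_n$ is a subrepresentation of an (infinite) multiple of $\lambda_{\Gamma_n}$; this applies to $\tilde\sigma$ and, via Fell absorption, to $\tilde\sigma\otimes\overline{\tilde\sigma}$. Property $(\tau)$ therefore guarantees that the averaging operator
\[
h := \frac{1}{|S_n|}\sum_{g\in S_n}(\tilde\sigma\otimes\overline{\tilde\sigma})(g), \qquad S_n := \pi_n(S),
\]
acting on $\mathrm{HS}(L^2(R))$, has norm at most $\delta$ on the orthogonal complement of its fixed vectors. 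Form the rank-one projection $X_n := |u_n\rangle\langle u_n|\in\mathrm{HS}(L^2(R))$; a routine estimate (parallel to the opening of the lemma's proof) gives $\|(\tilde\sigma\otimes\overline{\tilde\sigma})(g_{s,n}^{-1})X_n - X_n\|_{\mathrm{HS}}\le 2\epsilon_n$ for every $s\in S$. The full-group average $Y_n := \frac{1}{|\Gamma_n|}\sum_{g\in\Gamma_n}(\tilde\sigma\otimes\overline{\tilde\sigma})(g)X_n$ is then $(\tilde\sigma\otimes\overline{\tilde\sigma})$-invariant and, by the spectral-gap estimate, lies within $\mathrm{HS}$-distance $2\epsilon_n/(1-\delta)$ of $X_n$.

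Writing the spectral decomposition $Y_n = \sum_i\mu_i\,\zeta_i\otimes\bar\zeta_i$, the remaining computations of the preceding lemma carry over verbatim, producing a dominant simple eigenvalue $\mu_{i_0}\ge 1 - O\!\bigl(\epsilon_n/(1-\delta)\bigr)$ with an accompanying unit eigenvector $\zeta$ (a suitable phasing of $\zeta_{i_0}$) satisfying $\|\zeta - u_n\|_2 \le C\sqrt{\epsilon_n/(1-\delta)}$, and such that the rank-one spectral projection $\zeta\otimes\bar\zeta$ is genuinely $(\tilde\sigma\otimes\overline{\tilde\sigma})$-invariant. The latter forces $\tilde\sigma(g)\zeta = \eta_n(g)\,\zeta$ for some character $\eta_n\colon\Gamma_n\to\mathbb{T}$. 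Invoking the hypothesis that $\Gamma_n$ admits no non-trivial characters, $\eta_n\equiv 1$, so $\tilde\sigma(g)\zeta=\zeta$ for every $g\in\Gamma_n$. A triangle inequality combining this with the approximate eigenvalue equation and the closeness $\|\zeta-u_n\|_2\le C\sqrt{\epsilon_n/(1-\delta)}$ then yields
\[
|1-\chi(s)| \le \epsilon_n + 2C\sqrt{\epsilon_n/(1-\delta)} \longrightarrow 0 \quad\text{along }\omega,
\]
for every $s\in S$. Since $S$ generates $\Gamma$, $\chi$ is trivial, contradicting the hypothesis.

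The main obstacle is the opening identification of $\tilde\sigma$ (and hence $\tilde\sigma\otimes\overline{\tilde\sigma}$) as a subrepresentation of a multiple of $\lambda_{\Gamma_n}$, together with the verification that property $(\tau)$ actually delivers the spectral gap on the $\bigl(\tilde\sigma\otimes\overline{\tilde\sigma}\bigr)$-non-invariant part of $\mathrm{HS}(L^2(R))$. Once this bimodule/Fell-absorption step is secured, the rest is a near-mechanical adaptation of the preceding lemma, with the no-non-trivial-characters hypothesis on $\Gamma_n$ playing the decisive role of forcing the extracted character $\eta_n$ to be trivial, which in turn forces the original $\chi$ to be trivial.
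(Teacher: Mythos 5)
Your argument is correct and follows essentially the same route as the paper's proof: assume $[\pi]=[\pi\circ\chi^{-1}]$, view the lifted unitaries $u_n$ as almost eigenvectors for the conjugation representation of the finite quotient $\Gamma_n$ on $L^2(R)$, use that any representation of the finite group $\Gamma_n$ is contained in a multiple of $\lambda_{\Gamma_n}$ to import the property-$(\tau)$ spectral gap, extract an exact character on $\Gamma_n$, and conclude from the hypothesis that $\Gamma_n$ has no non-trivial characters. The only difference is presentational: you re-derive the content of the preceding lemma directly inside the Hilbert--Schmidt operators on $L^2(R)$, whereas the paper transfers the almost-invariant vector to a multiple of $\ell_2(\Gamma_n)$ and cites that lemma.
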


\begin{proof}
Suppose that $[\pi]=[\pi\circ\chi^{-1}]$.
Then, there is a sequence $(u_n)$ of unitary elements in $R$ such that
$\|u_n^*\pi_n(g)u_n-\bar{\chi}(g)\pi_n(g)\|_2\to0$ for every $g\in\G$.
It follows that for the representation $\sigma_n:=\Ad\pi_n$ of $\G$ on $L^2(R)$,
the vector $u_n \in L^2(R)$ is almost invariant under $\chi(g)\sigma_n(g)$.
Since $\sigma_n$ factors through $\pi_n$, it is contained in a multiple of
$\lambda_{\G_n}\colon\G\curvearrowright\ell_2(\G_n)$.
Hence, the representation $\chi(g)\lambda_{\G_n}(g)$ has an almost invariant vector.
By the previous lemma, there are characters $\chi_n$ on $\G_n$ such that
$\chi_n\circ\pi_n\to\chi$ pointwise.
But since $\G_n$ has no non-trivial characters, all $\chi_n$ are trivial and so is $\chi$.
\end{proof}

\begin{cor}
Every non-trivial character on $\IF_r$ acts non-trivially on $\Hom(L(\IF_r),R^\omega)$.
\end{cor}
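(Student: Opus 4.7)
The plan is to apply Theorem \ref{thm:charaction} to $\G=\IF_r$ for $r\ge 2$ (for $r=1$, $L(\IF_1)$ is abelian and falls outside the setup). The hypotheses of that theorem demand a sequence of finite quotients $\pi_n\colon\IF_r\to\G_n$ satisfying (a) $\bigcap_n\ker\pi_n=\{1\}$, (b) property $(\tau)$ for $\IF_r$ with respect to $\{\pi_n\}$, and (c) $\G_n=[\G_n,\G_n]$, so that $\G_n$ admits no nontrivial unitary character. Granted such a family, Theorem \ref{thm:charaction} produces an embedding $\pi$ for which $[\pi]\ne[\pi\circ\chi^{-1}]$ for every nontrivial character $\chi$ on $\IF_r$, which is exactly the statement that $\chi$ moves a point of $\Hom(L(\IF_r),R^\omega)$.

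For the family I would use the congruence tower of $SL_2(\IZ)$. The principal congruence subgroup $\G(2)\le SL_2(\IZ)$ of level $2$ is free of rank $2$, so $\IF_2\cong\G(2)$ has finite index in $SL_2(\IZ)$; by Nielsen--Schreier, any $\IF_r$ with $r\ge 2$ embeds as a finite-index subgroup of $\IF_2$, and hence of $SL_2(\IZ)$. Fix such an embedding and, for each odd prime $p$, let $\pi_p\colon\IF_r\to SL_2(\IF_p)$ be the restriction of reduction mod $p$. Condition (a) is immediate, since a nontrivial element of $SL_2(\IZ)$ has some off-identity integer entry that becomes nonzero mod all sufficiently large $p$. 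For (c), the image $\pi_p(\IF_r)$ has index in $SL_2(\IF_p)$ bounded uniformly in $p$ (by $[SL_2(\IZ):\IF_r]$), whereas for $p$ large every proper subgroup of $PSL_2(\IF_p)$ has index at least $(p+1)/2$; hence $\pi_p$ is surjective for all sufficiently large $p$, and for $p\ge 5$ the group $SL_2(\IF_p)$ is perfect.

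The substantive ingredient is (b). Selberg's $3/16$ theorem gives property $(\tau)$ for $SL_2(\IZ)$ with respect to its congruence quotients, and property $(\tau)$ is preserved under passage to finite-index subgroups, so $\{\pi_p\}$ has property $(\tau)$ for $\IF_r$. (A more elementary alternative is Bourgain--Gamburd expansion for Cayley graphs of $SL_2(\IF_p)$.) This is the one step where a nontrivial arithmetic input seems unavoidable, and therefore the main obstacle; everything else is bookkeeping. Once property $(\tau)$ is in hand, Theorem \ref{thm:charaction} immediately yields the corollary.
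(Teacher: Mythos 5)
Your proposal is correct and follows essentially the same route as the paper's proof: realize $\IF_r$ as a finite-index subgroup of $(\mathrm{P})SL_2(\IZ)$, reduce modulo large primes to get surjections onto the perfect (indeed simple, in the $PSL_2$ case) congruence quotients, obtain property $(\tau)$ from Selberg's theorem together with finite index, and then invoke Theorem \ref{thm:charaction}. The only cosmetic differences are that the paper works with $PSL_2$ and treats $r=2$ "for simplicity," whereas you use $SL_2(\IF_p)$ and spell out the reduction of general finite $r\ge 2$ to $r=2$ via Nielsen--Schreier.
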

\begin{proof}
For simplicity, we assume $r=2$.
It is sufficient to exhibit a sequence $(\pi_n)$ satisfying the assumption
of Theorem \ref{thm:charaction}.
It is well-known that the subgroup of $\mathrm{PSL}(2,\IZ)$, generated by
$\left[\begin{smallmatrix} 1 & 2 \\ 0 & 1\end{smallmatrix}\right]$ and
$\left[\begin{smallmatrix} 1 & 0 \\ 2 & 1\end{smallmatrix}\right]$,
is isomorphic to $\IF_2$ and has finite index.
Take an increasing sequence $(p_n)$ of prime numbers larger than $3$ and consider
\[
\pi_n\colon \IF_2\hookrightarrow\mathrm{PSL}(2,\IZ)\to\mathrm{PSL}(2,\IZ/p_n\IZ)=\G_n.
\]
Note that all $\pi_n$ are surjective.
Property $(\tau)$ follows from Selberg's theorem (see Example 4.3.3.D in \cite{lubotzky})
plus the fact that $\IF_2$ has finite index in $\mathrm{PSL}(2,\IZ)$.
Moreover, $\mathrm{PSL}(2,\IZ/p_n\IZ)$ are simple and have no non-trivial characters.
\end{proof}

%%%%%%%%%%%%%%%%%%%%%%%%%%%%

\end{document}